\theoremstyle{plain}
\theoremstyle{definition}
\theoremstyle{remark}
\numberwithin{equation}{section}
\newcommand{\1}{\mathds 1}
\newcommand{\F}{\mathscr F}
\newcommand{\R}{{\mathbb{R}}}
\newcommand{\vep}{{\varepsilon}}
\renewcommand{\to}{\longrightarrow}
\newcommand{\lmt}{\longmapsto}
\newcommand{\sgn}{{\rm sgn}}
\newcommand{\bs}{\boldsymbol}
\newcommand{\ms}{\mathscr}
\renewcommand{\P}{{\mathbb P}}
\newcommand{\E}{{\mathbb  E}}
\newcommand{\dxi}{{\widehat{\xi}}}
\begin{document}

\newtheoremstyle{slantthm}{10pt}{10pt}{\slshape}{}{\bfseries}{}{.5em}{\thmname{#1}\thmnumber{ #2}\thmnote{ (#3)}.}
\newtheoremstyle{slantthmp}{10pt}{10pt}{\slshape}{}{\bfseries}{}{.5em}{\thmname{#1}\thmnumber{ #2}\thmnote{ (#3)}.}
\newtheoremstyle{slantrmk}{10pt}{10pt}{\rmfamily}{}{\bfseries}{}{.5em}{\thmname{#1}\thmnumber{ #2}\thmnote{ (#3)}.}

\theoremstyle{slantthm}
\newtheorem{thm}{Theorem}[section]
\newtheorem{prop}[thm]{Proposition}
\newtheorem{lem}[thm]{Lemma}
\newtheorem{cor}[thm]{Corollary}
\newtheorem{ass}[thm]{Assumption}
\newtheorem{defi}[thm]{Definition}
\newtheorem{prob}[thm]{Problem}
\newtheorem{disc}[thm]{Discussion}
\newtheorem*{nota}{Notation}
\newtheorem*{conj}{Conjecture}
\theoremstyle{slantrmk}
\newtheorem{rmk}[thm]{Remark}
\newtheorem{eg}[thm]{Example}
\newtheorem{step}{Step}
\newtheorem{claim}{Claim}
\newtheorem{que}[thm]{Question}
\theoremstyle{plain}
\newtheorem{thmm}{Theorem}[section]

\title{{\bf Wright-Fisher diffusions for evolutionary games with death-birth updating}}

\author{Yu-Ting Chen\footnote{Department of Mathematics, University of Tennessee, Knoxville, TN, US}}

\maketitle

\vspace{-1cm}

\abstract{
We investigate spatial evolutionary games with death-birth updating in large finite populations. Within growing spatial structures subject to appropriate conditions, the density processes of a fixed type are proven to converge to the Wright-Fisher diffusions  with drift. In addition, convergence in the Wasserstein distance of the laws of their occupation measures holds. The proofs of these results develop along an equivalence between the laws of the evolutionary games and certain voter models and rely on the analogous results of voter models on large finite sets by convergences of the Radon-Nikodym derivative processes. As another application of this equivalence of laws, we show that in a general, large population of size $N$, for which the stationary probabilities of the corresponding voting kernel are comparable to uniform probabilities, a first-derivative test among the major methods for these evolutionary games is applicable at least up to weak selection strengths in the usual biological sense (that is, selection strengths of the order $\mathcal O(1/N)$). 
\\ 

\noindent\emph{Keywords:} Voter model, Wright-Fisher diffusion, Evolutionary game\\

\noindent\emph{Mathematics Subject Classification (2000):} 60K35, 82C22, 60F05, 60J60
}
{
\tableofcontents}

\section{Introduction and main results}\label{sec:intro}
The goal of this paper is to investigate diffusion approximations of the interacting particle systems which are known in the biological literature as evolutionary games with death-birth updating. In the Supplementary Information \cite[SI]{Ohtsuki_2006} of their seminal work on evolutionary games, Ohtsuki et al. analyze the  density processes of a fixed type in the evolutionary games with death-birth updating on random regular graphs. They find that these processes approximate the Wright-Fisher diffusions with drift in the limit of large population size, and the key argument there follows the physics method of pair approximation. This method goes back to Matsuda et al.
\cite{Matsuda_1992} for the Lotka-Volterra model and has since been applied extensively to spatial models in biology. 

In this work, we present a mathematical proof of the diffusion approximation in \cite[SI]{Ohtsuki_2006}.
The proof follows the viewpoint in \cite{CDP,Chen_2013}, where the evolutionary games are regarded as perturbations of certain  reference voter models 
and is built on the assumption that the diffusion approximation of the evolutionary games on large finite sets
holds in the special case of voter models. This assumption is supported by the results in \cite{CCC,CC_16}. There it is  proven that the diffusion approximation of voter models on large finite sets requires only  mild conditions of the underlying spatial structures and that the Wright-Fisher diffusions appear as
the universal limiting processes. 
The approach in this paper thereby develops along an equivalence between the probability laws of the evolutionary games on finite sets  and the reference voter models.

\subsection{The evolutionary games and voter models}
Throughout this paper
we consider evolutionary games on finite sets to be defined as follows.
On a finite set $E$ with size $N\geq 2$, each of the sites is occupied by an individual with one of the two types, $1$ and $0$. Individuals engage in pairwise interaction, and payoffs from this interaction follow a given payoff matrix $\Pi=\big(\Pi(\sigma,\tau)\big)_{\sigma,\tau\in \{1,0\}}$  
with real entries. Whenever an individual with type $\sigma$ and an individual with type $\tau$ interact, the individual with type $\sigma$ receives payoff $\Pi(\sigma,\tau)$. With respect to a given transition probability $q$ on $E$, the total payoff of the individual at site $x$ is given by the following weighted average provided that the population configuration is $\xi\in \{1,0\}^E$:
\begin{align}\label{weightedpay}
\sum_{y\in E}q(x,y)\Pi\big(\xi(x),\xi(y)\big).
\end{align}
An individual's total payoff enters its fitness (that is, reproductive rate), and the fitness is given by a convex combination of baseline fitness 1 and the total payoff. Here,
{\bf selection strength} $w$ is the constant weight applied to the total payoff of every individual throughout time. It is understood to be sufficiently small, relative to the entries of the payoff matrix $\Pi$, to ensure that all the fitness values are positive. 

In the above evolutionary game, players in the population are updated indefinitely according to the following rule: At the unit rate, the individual at $x$ is chosen to die. Then the individuals at all the other sites compete for reproduction to occupy the vacant site $x$ in a random fashion; the 
probability of successful reproduction of the parent at $y$, $y\neq x$, is proportional to the following product:
\begin{align}\label{fitness}
q(x,y)\cdot \mbox{(fitness of the individual at $y$)}.
\end{align}
Here and throughout this paper, we require that $q$ 
have trace (that is, $q(x,x)\equiv 0$) and
be irreducible and reversible.
See Equation~(\ref{def:Lw}) for the Markov generator of the evolutionary game.
For the purpose of this introduction,
we remark that in the above scenario,  the entire population fixates at either 
 the all-$1$ state or the all-$0$ state after a sufficiently large amount of time as a result of the assumed irreducibility of $q$. 
In addition, in certain biological contexts (cf. \cite{Nowak_2010}), significant interest in including mutation in evolutionary game dynamics exists. We will only consider models without mutations unless otherwise mentioned until Section~\ref{sec:poisson}.

In the special case of zero selection strength, the  evolutionary game introduced above simplifies to a reference voter model with {\bf voting kernel} $q$. A voter model is an oversimplified model for death and birth of species in biological systems and can be regarded as a generalization of the Moran process from population genetics \cite{Moran} on a structured population. Here, the underlying spatial structure is defined in the natural way by the nonzero entries of $q$. The canonical example is the case where $q$ is the transition kernel of a random walk on a finite, connected, simple graph. In this case, the individuals chosen to die are replaced by the children of their neighbors. 

The study of
voter models allows for several classical approaches of interacting particle systems  to start with (cf. \cite{L:IPS}), including  attractiveness and a nice duality by coalescing Markov chains driven by voting kernels both in the sense of the Feynman-Kac representation (cf. Section~\ref{sec:FK}) and in the pathwise sense
of identity by descent in population genetics (cf. \cite[Section III.6]{L:IPS} and \cite{Granovsky_1995,Mal_cot_1975, Rousset}). 
By contrast,
the game transition probabilities at positive selection strengths show configuration-dependent asymmetry arising from the differences in individuals' payoffs. In this case,
attractiveness is absent,
and exact evaluations of basic quantities 
in population genetics (e.g. absorbing probabilities and expected times to absorption) become difficult. 
See \cite{Chen_2015} for additional properties of the evolutionary games arising from the lack of attractiveness.

\subsection{Pair approximation for the evolutionary games}\label{sec:1.2}
The primary focus of this paper is an approximation method for the evolutionary games with death-birth updating in ~\cite[SI]{Ohtsuki_2006}. With the goal of quantifying the absorbing probabilities of the evolutionary games under weak selection, the analysis in \cite[SI]{Ohtsuki_2006} invokes the corresponding density processes and conditional density processes. Here, weak selection
is usually understood in the biological literature as requiring $w\leq \mathcal O(1/N)$. In addition, with respect to a voting kernel $q$ with stationary distribution $\pi$, the {\bf density} of $\sigma$'s in $\xi\in \{1,0\}^E$ is given by the following weighted average:
\begin{align}\label{def:p1}
p_\sigma(\xi)=\sum_{x\in E}\pi(x)\1_{\{\sigma\}}\big(\xi(x)\big),
\end{align}
and, with $p_{\tau\sigma}(\xi)$ defined by the weighted average
\begin{align}\label{def:p01}
p_{\tau \sigma}(\xi)=\sum_{x,y\in E}\pi(x)q(x,y)\1_{\{\tau\}}\big(\xi(x)\big)\1_{\{\sigma\}}\big(\xi(y)\big),
\end{align}
the conditional densities are defined by the ratios
\[
p_{\tau|\sigma}(\xi)=\frac{p_{\tau\sigma}(\xi)}{p_\sigma (\xi)}
\]
($0/0=0$ by convention).

The analysis in \cite[SI]{Ohtsuki_2006} 
provides diffusion approximations of the absorbing probabilities of the evolutionary game $(\xi_t)$ by means of
the same probabilities of the one-dimensional process $p_1(\xi_t)$, where the underlying spatial structure is assumed to be a large random regular graph of degree $k\geq 3$. 
As we will discuss in more detail below, the implication of pair approximation is nontrivial and is a key step to make further analysis possible in \cite[SI]{Ohtsuki_2006}. It leads to the property that the two processes $p_1(\xi_t)$ and $p_{1|0}(\xi_t)$ form a closed system. Moreover, the two processes decouple in the limit of large population size, whereas the density process $p_1(\xi_t)$ approximates a self-consistent Wright-Fisher diffusion with drift coefficient and squared noise coefficient given by 
\begin{align}\label{coefficients}
w\cdot \frac{k-2}{k^2(k-1)}p_1(\xi)[1-p_1(\xi)][\alpha p_1(\xi)+\beta]\quad\mbox{and}\quad
\frac{2(k-2)}{N(k-1)}p_1(\xi)[1-p_1(\xi)],
\end{align}
respectively. Here, the constants $\alpha$ and $\beta$ entering the drift coefficient are given by the following equations:
\begin{align}
\begin{split}\label{alphabeta}
\alpha=&(k+1)(k-2)[\Pi(1,1)-\Pi(1,0)-\Pi(0,1)+\Pi(0,0)],\\
\beta=&(k+1)\Pi(1,1)+(k^2-k-1)\Pi(1,0)-\Pi(0,1)-(k^2-1)\Pi(0,0).
\end{split}
\end{align}
See \cite[Eq. (18) in SI]{Ohtsuki_2006} for the coefficients in Equation~(\ref{coefficients}). (The differences between the coefficients in \cite[Eq. (18) in SI]{Ohtsuki_2006} and those in Equation~(\ref{coefficients}) are only attributable to the definition of total payoffs of individuals and the choice of time scales in this paper and will be explained in Remark~\ref{rmk:O}.) 
Notice that in Equation~(\ref{coefficients}), only the drift coefficient depends on the game payoffs, and the approximate diffusion process is not mean-field but incorporates the underlying spatial structure only by the simple parameter $k$.

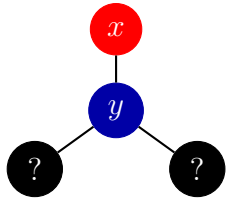
\begin{figure}
\vspace{.5cm}
\begin{center}
\begin{tikzpicture}[scale=0.6]
\path (3,1.8) node(a) [circle, draw=red, fill= red, text=white] {$x$}
(3,0) node(b) [circle, draw=blue!65!black, fill=blue!65!black, text=white] {$y$}
 (1.2,-1.3) node(d) [circle, draw=black, fill=black, text=white] {$?$}
(4.8,-1.3) node(h) [circle, draw=black, fill=black, text=white]{$?$};
\draw[-,thick] (node cs:name=a) -- (node cs:name=b);
\draw[-,thick]
 (node cs:name=d)--(node cs:name=b)
(node cs:name=h)--(node cs:name=b)
;
\end{tikzpicture}
\end{center}
\caption{\rm Site $x$ is occupied by a focal individual with type $0$. Site $y$ is occupied by an individual with type $1$. The number of types among the neighbors of $y$, excluding the one at $x$, are left to be estimated.
}
\label{fig:1}
\end{figure}

If we understand
the application of pair approximation in \cite[SI]{Ohtsuki_2006} correctly, then it can be summarized as two major mechanisms to be discussed below (they are adapted to the setup in this paper). In particular, they both involve reductions of
 the local frequencies
\begin{align}\label{def:ptau}
p_\tau(y,\xi)=\sum_{z\in E}q(y,z)\1_{\{\tau\}}\big(\xi(z)\big)=\frac{\#\{z;z\sim y,\xi(z)=\tau\}}{k}
\end{align}
of individuals with type $\tau$ to the conditional  densities $p_{\tau|\sigma}(\xi)$, where $y$'s are sites occupied by
 individuals with type $\sigma$ and $z\sim y$ means that $z$ and $y$ are neighbors to each other. (The second equality above follows since the voting weight between a site and any of its neighboring sites is $1/k$ on a $k$-regular graph.)

Now, we condition on the event that an individual randomly chosen from the entire population, called a \emph{focal} individual, is an individual with type $0$ located at site $x$.
Then the first mechanism states that
 the types of its neighbors, whose numbers can be quantified by $k$ multiplies of the local frequencies $p_1(x,\xi)$ and $p_0(x,\xi)$ as in (\ref{def:ptau}), are
i.i.d. Bernoulli distributed. Moreover, the probability of finding an individual with type $1$ is $p_{1|0}(\xi)$. Next, 
recall that the fitness of a neighbor, say at site $y$ and with type $\sigma$,
of the focal individual is by definition a convex combination of baseline fitness $1$ and the total payoff that it receives. It can be written as follows: 
\begin{align}\label{approx:fsigma0}
f_\sigma(y)=(1-w)+w\Bigg(\frac{1}{k}\Pi(\sigma,0)+p_1(y,\xi)\Pi(\sigma,1)+\Big(p_0(y,\xi)-\frac{1}{k}\Big)\Pi(\sigma,0)\Bigg).
\end{align}
Here in Equation~(\ref{approx:fsigma0}), the first payoff $\Pi(\sigma,0)$ on the right-hand side results from the interaction between the focal individual with type $0$ and the neighbor at $y$ with type $\sigma$ under consideration.
Then the second mechanism states that the fitness in Equation~(\ref{approx:fsigma0}) satisfies the following approximate equality:
\begin{align}\label{approx:fsigma}
f_\sigma(y) \simeq  (1-w)+w\Bigg(\frac{1}{k}\Pi(\sigma,0)+\frac{(k-1)p_{1|\sigma}(\xi)}{k}\Pi(\sigma,1)+\frac{(k-1)p_{0|\sigma}(\xi)}{k}\Pi(\sigma,0)\Bigg),
\end{align}
where the numbers of 
 types of the remaining $k-1$ neighbors of the individual at $y$ are now estimated by the conditional densities $p_{1|\sigma}(\xi)$ and $p_{0|\sigma}(\xi)$. See Fig~\ref{fig:1}. Similar hypotheses are in force if the focal individual is conditioned to be an individual with type $1$.
The argument in \cite[SI]{Ohtsuki_2006} further
uses the locally  
tree-like property of a large random $k$-regular graph (cf. \cite{McKay_1981}) so that 
the neighbors of the individual at site $y$, excluding the focal individual, can be neglected when fitnesses of the other neighbors of the focal individual are calculated.

The above application of pair approximation in \cite[SI]{Ohtsuki_2006} is closely related to the standard probabilistic technique of characterizing the scaling limits of stochastic processes by the corresponding martingale problems, which requires the closure of the dynamical equations under consideration.
In a general population, however, a typical statistic of the evolutionary game depends on state of the entire evolving population and the number of equations required to close its dynamics appears to grow with the population size. This fact should make clear a nontrivial mathematical issue underlying the two `quasi-mean-field' hypotheses discussed above. Yet it is not clear to us how to verify the hypotheses.

Before the present work, mathematical proofs are provided to support arguably the most important finding in \cite[SI]{Ohtsuki_2006} implied by the above diffusion approximation. That finding uses explicit solutions of the absorbing probabilities of the approximate diffusion processes with the coefficients defined in Equation~(\ref{coefficients}), and considers games of the generalized prisoner's dilemma with payoff matrices given as follows:
\begin{align}\label{eq:payoff0}
\Pi=
\bordermatrix{
&  1 & 0 \cr
1 & b-c & -c \cr
0 & b & 0 \cr},\quad b,c\in \R.
\end{align} 
Here, $b$ and $c$ are interpreted as benefit and cost, respectively, when they are strictly positive.
This finding in \cite[SI]{Ohtsuki_2006} states that for $k\geq 3$, the degree $k$ of a large random $k$-regular graph approximates a critical value concerning whether the emergence of cost-benefit effective game interactions
can improve the survival of individuals with type $1$: If $b>ck$, the survival probability of individuals with type $1$ is strictly larger than the same probability under the reference voter model. If $b<ck$, the strict inequality between the probabilities is reversed.

The work of Cox, Durrett and Perkins in \cite{CDP} obtains 
the rescaled limits of general voter model perturbations on any integer lattice of dimension $d\geq 3$ and proves related deep results. There these results are used to study long-term behaviors of the interacting particle systems. In particular, it is proven in \cite{CDP} that on any integer lattice of dimension $d\geq 3$, the graph degree $2d$ is exactly the critical value for the evolutionary game.
More precisely, the critical values in \cite{CDP} are defined in terms of the fixation of types in finite regions after a large amount of time, instead of the global fixation of types after a large amount of time. (To accommodate the transient nature of these infinite lattices,
this definition is necessary.) Other progress relating to mathematical proofs of the prediction in \cite[SI]{Ohtsuki_2006} has been within the scope of finite, simple, regular graphs and considers a first-derivative test that is used to compare absorbing probabilities at all \emph{arbitrary small} selection strengths 
by signs of their derivatives at zero selection strength (e.g. \cite{Chen_2013}). In contrast to the method in \cite[SI]{Ohtsuki_2006},
the major investigation along that first-derivative test  focuses on exact evaluations of the derivatives in finite populations (see Section~\ref{intro:3} for more details). These evaluations under all initial conditions are now complete in \cite{Chen_2015} by the duality between voter models and coalescing Markov chains.
In particular,  within the spatial structures of $k$-regular graphs, \cite{Chen_2013,Chen_2015}
recover the critical value $k$ predicted in \cite[SI]{Ohtsuki_2006} in the limit of large population size.

\subsection{Weak convergence of the game density processes}\label{intro:1.3}
The first main result  of this paper is a general theorem for diffusion approximations of the game density processes. See Theorem~\ref{thm:main2}.
An application of the theorem leads to a mathematical proof of the prediction  in \cite[SI]{Ohtsuki_2006} discussed above  when payoff matrices as in Equation~(\ref{eq:payoff0}) are in use: 
After a time change by a suitable constant multiple of $N$, the approximate diffusion process defined by the coefficients in Equation~(\ref{alphabeta}) and  subject to a selection strength as a constant multiple of $1/N$ coincides with a limiting diffusion process obtained in this paper. See Theorem~\ref{thm:O} and Remark~\ref{rmk:O}.

The proof of Theorem~\ref{thm:main2} does not invoke pathwise duality for the evolutionary game dynamics as in \cite{CDP}, in which certain branching coalescing Markov chains are used as the dual processes. By contrast, we proceed with the fact 
that the laws of the evolutionary games are equivalent to the laws of the reference voter models (Section~\ref{sec:poisson}). In this way, we can view the game density processes  in terms of the voter models, even with the limit of large population size, if the corresponding Radon-Nikodym derivative processes satisfy appropriate tightness properties. 
Then the proof of Theorem~\ref{thm:main2} turns to and relies heavily on the main result in  \cite{CCC} that diffusion approximations of the voter density processes hold on large spatial structures where the underlying voting kernels are subject to appropriate, but mild, mixing conditions; an extension in \cite{CC_16} is used when mutation is present.
In these cases, the limiting voter density processes are given by the Wright-Fisher diffusions, which originally arise from the Moran processes, namely, the voter models on complete graphs (cf. \cite{EK:MP}).
(See also the pioneering works \cite{MT,CDP} for diffusion approximations of voter models, which are for voter models defined on integer lattices and give limits as solutions to stochastic PDEs.) Moreover, the spatial structures are encoded by the time scales in these diffusion approximations and are not present in the coefficients of the limiting diffusions.
 See Theorem~\ref{thm:WF} for a restatement of these results in \cite{CCC,CC_16}. By Girsanov's theorem, characterizing subsequential limits of the game density processes is thus reduced to characterizing the covariations between the limiting voter density process and subsequential limits of the Radon-Nikodym derivative processes (Theorem~\ref{thm:main2} 2$^\circ$) and Theorem~\ref{thm:main2-1}). A certain spatial homogeneity property of the voting kernels is enough to close the 
covariations by the limiting voter density process. 

Let us give two remarks for the present method. 
First, it allows for the possibility of explicitly  characterizing the limiting Radon-Nikodym derivative process; it can take the form of  a Dol\'eans-Dade exponential martingale  explicitly defined in terms of the limiting voter density process 
(Theorem~\ref{thm:main2} 3$^\circ$)). We stress that the Radon-Nikodym derivative processes under consideration are used to change the laws of the voter models to the laws of the evolutionary games, not just to relate the laws of their density processes. Second, the only reason why we restrict our attention to the particular payoff matrices in Equation~(\ref{eq:payoff0}) is because we do not know how to close the covariations between subsequential limits of the Radon-Nikodym derivative processes and the limiting voter density process explicitly otherwise except on very special graphs. On the other hand, 
 the game density processes under general payoff matrices are tight if the voting kernels are subject to appropriate conditions, 
 and any subsequential limit is a continuous semimartingale with a Wright-Fisher martingale part (Theorem~\ref{thm:main2} 1$^\circ$)). This result proves the presence of a Wright-Fisher noise coefficient in the approximate diffusion process obtained in \cite[SI]{Ohtsuki_2006}
when individuals play games according to a general payoff matrix.

\subsection{Occupation measures of the game density processes}
As an application of the diffusion approximation of the game density processes,
we investigate the use of the absorbing probabilities of the limiting diffusions as approximate solutions for the absorbing probabilities of the evolutionary games in \cite[SI]{Ohtsuki_2006}. A similar method is  used in \cite{Sui_2015} to approximate the expected times to absorption of the evolutionary games. For these two approximations, the reader may recall the fact that 
the
weak convergence of absorbing processes does not guarantee
the weak convergence of their times to absorption in general.  
By proving a stronger tightness property of the Radon-Nikodym derivative processes at selection strengths of order $ \mathcal O(1/N)$, we show that Oliveira's result on the convergence in the Wasserstein distance of order $1$ of times to absorption in \cite{Oliveira_2012,Oliveira_2013} and the convergence of absorbing probabilities in \cite{CCC} under voter models carry to the corresponding convergences under the evolutionary games. These are included in  
the second main result, Theorem~\ref{thm:main3}, where the major theme is
around convergences of occupation measures of the game density processes. See
Cox and Perkins~\cite{Cox_2004} for a closely related result of voter models on integer lattices.

\subsection{The game absorbing probabilities}\label{intro:3}
The third main result of this paper, Theorem~\ref{thm:main1}, proves that in a large finite population, the first-derivative test discussed by the end of Section~\ref{sec:1.2} for the comparison of the game absorbing probabilities and the voter absorbing probabilities
is applicable at all selection strengths at least up to $\mathcal O(1/N)$.
This result does not require particular forms of payoff matrices. 
Theorem~\ref{thm:main1} is a long overdue result motivated by a seminar inquiry from Omer Angel several years ago when the author was a Ph.D. student. An answer to Angel's inquiry can be used to quantify the scope of the first-derivative test in terms of the strength of selection. Here in this paper, it reinforces the comparison of the game absorbing probabilities and the voter absorbing probabilities by diffusion approximation. 
Indeed, the limiting diffusions in Theorem~\ref{thm:main2} can capture
game interactions among individuals only if the selection strengths are comparable to nonzero constant multiples of $1/N$.

To find selection strengths eligible for the first-derivative test, one could use some power series of the game absorbing probabilities in selection strength, which are obtained in \cite[Proposition~3.2]{Chen_2013}. Coefficients in the series are represented as explicit functionals of the voter models. In particular, an exact computation of the first-order coefficients is possible
by the duality between voter models and coalescing Markov chains and calculations of the coalescing Markov chains. This suggests similar arguments for
all the higher-order coefficients, and then finding appropriate bounds for them is turned to. Here we obtain the bound $\mathcal O(1/N)$ for the eligible selection strengths by the equivalence of laws, since this bound seems to pose technical difficulties
for that method by the power series in \cite{Chen_2013}. After all, the dual presentations for the higher-order coefficients appear highly intricate.

\paragraph{\bf Organization of the paper} In Section~\ref{sec:poisson}, we discuss the dynamics of the evolutionary games with death-birth updating in more detail. In Section~\ref{sec:radon}, we prove some a-priori bounds for the Radon-Nikodym derivative processes between the laws of the evolutionary games and the laws of the reference voter models. Section~\ref{sec:weak} investigates convergences of the game density processes. We reinforce this result to a convergence in the Wasserstein distance of occupation measures of the game density processes in Section~\ref{sec:wass}. In Section~\ref{sec:expansion}, we present the proof that the first-derivative test discussed above is applicable for all selection strengths up to $\mathcal O(1/N)$  for suitable voting kernels. In Section~\ref{sec:flip}, we calculate first-order expansions of some covariation processes of the evolutionary games in selection strength. Section~\ref{sec:FK} gives a brief account of the Feynman-Kac duality between voter models and coalescing Markov chains. Section~\ref{sec:FK} is followed by a list of frequent notations.  

\paragraph{\bf Acknowledgements} 
Partial supports from the Center of Mathematical Sciences and Applications, the John-Templeton Foundation, and  a grant from B~Wu and Eric Larson during the author's previous positions at Harvard University are gratefully acknowledged. The author would like to thank Prof. Martin A. Nowak for bringing the attention of the paper by Sui et al.~\cite{Sui_2015}.

\section{Stochastic integral equations for the evolutionary games}\label{sec:poisson}
In this section, we describe the Markovian dynamics of an evolutionary game with death-birth updating in more detail and give a construction of the evolutionary game by Poisson calculus. 
We write $S= \{1,0\}$ from now on and recall that voting kernels are assumed to have zero traces and be irreducible and reversible throughout this paper. 

First let us specify the generator of an evolutionary game defined by   
a voting kernel $(E,q)$ and a payoff matrix $\Pi=\big(\Pi(\sigma,\tau)\big)_{\sigma,\tau\in S}$ in the presence of mutation. In this case, the fitness of an individual at $x\in E$ under population configuration $\xi\in S^E$ is given by  
\begin{align}\label{def:fitness}
f^w(x,\xi)= (1-w)+w\sum_{y\in E}q(x,y)\Pi\big(\xi(x),\xi(y)\big).
\end{align}
Here and throughout the rest of this paper, we assume that selection strengths $w$ satisfy the constraint $w\in [0,\overline{w}]$, where
\begin{align}\label{def:w0}
\overline{w}=  \Big(2+2\max_{\sigma,\tau\in S}|\Pi(\sigma,\tau)|\Big)^{-1}.
\end{align}
Hence, $f^w(x,\xi)>0$ for all these $w$'s.
We also define the population configurations $\xi^x$ and $\xi^{x|\sigma}$ as the ones obtained from $\xi$ by changing only the type at $x$, with the type $\xi(x)$ at $x$ changed to 
\begin{align}\label{def:hat}
\dxi(x)=1-\xi(x)
\end{align}
for $\xi^x$ and changed to $\sigma\in S$ for $\xi^{x|\sigma}$. 
Then given a mutation measure $\mu$ on $S$, the generator of the evolutionary game is defined as follows:
\begin{align}\label{def:Lw}
\begin{split}
&\mathsf L^{w,\mu} F(\xi)=\sum_{x\in E}c^w(x,\xi)\big(F(\xi^x)-F(\xi)\big)+\sum_{x\in E}\int_{S}\big(F(\xi^{x|\sigma})-F(\xi)\big)d\mu(\sigma)
\end{split}
\end{align}
for $F:S\to \R$,
where $c^w(x,\xi)$'s are defined as follows:
\begin{align}
q^w(x,y,\xi)&= \frac{q(x,y)f^w(y,\xi)}{\sum_{z\in E}q(x,z)f^w(z,\xi)},\label{def:qw}\\
c^w(x,\xi)&= \sum_{y\in E}q^w(x,y,\xi)\big(\xi(x)\dxi(y)+\dxi(x)\xi(y)\big).\label{def:cw}
\end{align}
Notice that the function $q^w$ defined by (\ref{def:qw}) reduces to the voting kernel $q$ if $w=0$; in this case, $\mathsf L^{0,\mu}$ is the generator  of an $(E,q,\mu)$-voter model.

Now
we recall a coupling of the $(E,q,\mu)$-voter model by stochastic integral equations, which has been used in, for example, \cite{MT} and \cite[Lemma~2.1]{CDP_2000}. 
We introduce the following independent $(\F_t)$-Poisson processes:
\begin{align}
\begin{split}\label{rates}
\Lambda_t(x,y)& \quad\mbox{with rate}\quad  \E[\Lambda_1(x,y)]=q(x,y) \quad\mbox{and}\\
 \Lambda_t^{\sigma}(x)& \quad \mbox{with rate}\quad \E[\Lambda_1^{\sigma}(x)]=\mu(\sigma),\quad x,y\in E,\;\sigma\in S,
 \end{split}
\end{align}
which are defined on a complete filtered probability space $\big(\Omega,\F,(\F_t),\P\big)$. The filtration $(\F_t)$ is assumed to satisfy the usual conditions, and we set $\F_\infty=\bigvee_{t\geq 0}\F_t$. Then given an initial condition $\xi\in S^E$, 
an $(E,q,\mu)$-voter model $(\xi_t)$ can be defined as the pathwise unique $S^E$-valued solution, with c\`adl\`ag paths, of the following system of stochastic integral equations:\vspace{-.2cm} 
\begin{align}
\begin{split}
\xi_t(x)&=\xi(x)+\sum_{y\in E}\int_0^t \big(\xi_{s-}(y)-\xi_{s-}(x)\big)d\Lambda_s(x,y)\\
&\hspace{2cm}+\int_0^t\dxi_{s-}(x)d\Lambda_s^{1}(x)-\int_0^t\xi_{s-}(x)d\Lambda_s^{0}(x),\quad x\in E.\label{eq:voter}
\end{split}
\end{align}

We can use the system in (\ref{eq:voter}) to couple the above evolutionary game  in the following way. We introduce the $(\F_t,\P)$-martingale 
\begin{align}
\begin{split}\label{def:Dwxy}
D_t^w(x,y)= &\exp\Bigg\{
\int_0^t \log \frac{q^w(x,y,\xi_{s-})}{q(x,y)}d\Lambda_s(x,y)-\int_0^t \big(q^w(x,y,\xi_s)-q(x,y)\big)ds\Bigg\}
\end{split}
\end{align} 
to change the intensity of $\Lambda(x,y)$ under $\P$ whenever $q(x,y)>0$, and set $D^w_t(x,y)\equiv 1$ otherwise (see \cite[page 473]{Revuz_2005} for the fact that $D^w(x,y)$ defines an $(\F_t,\P)$-martingale). A global change of intensities is done through the following $(\F_t,\P)$-martingale:
\begin{align}
\label{def:D}
D_t^w\stackrel{\rm def}{=} &\prod_{(x,y)\in E\times E}D_t^w(x,y).
\end{align}
Define a probability measure $\P^w$ on $(\Omega,\F_\infty)$, with expectation $\E^w$, by
\begin{align}\label{def:Pw}
d\P^w|_{\F_t}=D^w_td\P|_{\F_t}.
\end{align}
Then the process $(\xi_t)$ satisfying (\ref{eq:voter}) defines an evolutionary game under $\P^w$, and its generator is given by $\mathsf L^{w,\mu}$; recall the definition of $c^w(x,y,\xi)$ in (\ref{def:cw}). 
In more detail, it follows from Girsanov's theorem 
(cf. \cite[Theorem~III.3.11 and Theorem~III.3.17]{JS}) that for any $x,y\in E$, the jump process $\Lambda(x,y)$ under $\P^w$ is an $(\F_t)$-doubly-stochastic Poisson process with an $(\F_t)$-predictable intensity 
$\big(q^w(x,y,\xi_{t-});t\geq 0\big)$
in the sense of S. Watanabe's characterization. That is, it holds that
\[
\E^w\left[\int_0^\infty C_td\Lambda_t(x,y)\right]=\E^w\left[\int_0^\infty C_tq^w(x,y,\xi_{t})dt\right]
\]
for all nonnegative $(\F_t)$-predictable processes $(C_t)$. Notice that under $\P^w$, $\Lambda^{\sigma}(x)$ remains an $(\F_t)$-Poisson process with rate $\mu(\sigma)$. 

The following proposition gives a summary 
of the above construction.

\begin{prop}\label{prop:SDE}
For any $w\in [0,\overline{w}]$ and initial condition $\xi\in S^E$, the pathwise unique solution $(\xi_t)$ of the system (\ref{eq:voter}) under $\P^w$ is a jump Markov process with its generator given by $\mathsf L^{w,\mu}$. 
\end{prop}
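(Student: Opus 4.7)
The plan is to verify the statement in three main steps: establish existence and pathwise uniqueness of solutions to the SDE (\ref{eq:voter}) under $\P$, identify the predictable compensators of the driving Poisson processes after the change of measure to $\P^w$, and use these to read off the infinitesimal generator of $(\xi_t)$ under $\P^w$.

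First, existence and pathwise uniqueness of (\ref{eq:voter}) under $\P$ are standard. The state space $S^E$ is finite, the total jump rate per unit time is bounded by $\sum_{x,y}q(x,y)+|E|\mu(S)<\infty$, and the solution can be constructed step-by-step between successive jump times of the driving Poisson processes. Under $\P$ alone, the resulting c\`adl\`ag process is a jump Markov process with generator $\mathsf L^{0,\mu}$, i.e.\ the $(E,q,\mu)$-voter model.

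Next, I would invoke Girsanov's theorem for point processes (cf.\ \cite[Theorem~III.3.11]{JS}) to translate the change of measure $d\P^w|_{\F_t}=D^w_t\,d\P|_{\F_t}$ into a change of compensators. The mutually independent Poisson processes $\Lambda(x,y)$ have compensators $q(x,y)\,ds$, and the factor $D^w_t(x,y)$ in (\ref{def:Dwxy}) is designed exactly to re-weight the intensity of $\Lambda(x,y)$ from $q(x,y)$ to the predictable intensity $q^w(x,y,\xi_{s-})$. Because the factors $D^w_t(x,y)$ corresponding to distinct ordered pairs act on independent Poisson processes, the product $D^w_t$ in (\ref{def:D}) is an honest $(\F_t,\P)$-martingale with $\E[D^w_t]=1$ for all $t\geq 0$: on any finite interval only finitely many jumps occur a.s., the factors are bounded since $q^w(x,y,\xi)/q(x,y)$ is uniformly bounded above and below in $\xi$, and a standard localization (or direct appeal to boundedness of rates on the finite state space) removes any integrability concern. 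After the change of measure, each $\Lambda(x,y)$ with $q(x,y)>0$ becomes an $(\F_t)$-doubly stochastic Poisson process with predictable intensity $q^w(x,y,\xi_{t-})$, while the mutation processes $\Lambda^\sigma(x)$, untouched by $D^w_t$ and independent of the $\Lambda(x,y)$'s, retain their rates $\mu(\sigma)$.

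Finally, identification of the generator is a direct computation. For any $F:S^E\to\R$, I would apply the change-of-variable formula to $F(\xi_t)$ along (\ref{eq:voter}), using the compensators identified above. For each ordered pair $(x,y)$, a jump of $\Lambda(x,y)$ at time $s$ produces the increment $F(\xi_{s-}^x)-F(\xi_{s-})$ precisely when $\xi_{s-}(x)\neq \xi_{s-}(y)$, i.e.\ with indicator $\xi_{s-}(x)\dxi_{s-}(y)+\dxi_{s-}(x)\xi_{s-}(y)$. Summing over $y\in E$ against the predictable intensity $q^w(x,y,\xi_{s-})$ yields $c^w(x,\xi_{s-})\bigl(F(\xi_{s-}^x)-F(\xi_{s-})\bigr)$; summing over $x$ and adding the mutation contribution produces exactly $\mathsf L^{w,\mu}F(\xi_{s-})$. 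Hence $F(\xi_t)-F(\xi)-\int_0^t \mathsf L^{w,\mu}F(\xi_s)\,ds$ is an $(\F_t,\P^w)$-martingale for every $F:S^E\to\R$, and since the coefficients of $\mathsf L^{w,\mu}$ depend only on the current state, well-posedness of the martingale problem on the finite state space $S^E$ delivers both the Markov property of $(\xi_t)$ under $\P^w$ and the identification of its generator with $\mathsf L^{w,\mu}$.

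The only genuinely delicate step is the martingale property of the product $D^w_t$, since (\ref{def:D}) is written as a product over the whole of $E\times E$; but as noted, on any bounded time interval only finitely many factors actually differ from $1$ a.s., and each factor is uniformly bounded on that interval, so the product is a bounded local martingale and hence a true martingale. Everything else is a routine consequence of Girsanov's theorem for point processes combined with the finite state space.
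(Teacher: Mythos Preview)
Your proposal is correct and follows essentially the same route as the paper, which does not give a separate proof of this proposition but states it as a summary of the construction described in the paragraphs immediately preceding it (invoking the same Girsanov reference \cite[Theorem~III.3.11 and Theorem~III.3.17]{JS}). One small correction to your final paragraph: the product in (\ref{def:D}) is over the \emph{finite} set $E\times E$, so there is no infinite-product issue to begin with, and $D^w_t$ is not literally a bounded local martingale on $[0,T]$ (the Poisson processes can have arbitrarily many jumps); the martingale property of each factor $D^w(x,y)$ follows instead from the standard fact about exponential martingales of counting processes with bounded predictable intensities (the paper cites \cite[page~473]{Revuz_2005}), and the finite product over independent Poisson processes then inherits it.
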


In the sequel, we write $\P^w_\xi$ and $\E^w_\xi$ whenever the solution to (\ref{eq:voter}) is subject to the initial condition $\xi\in S^E$.
The notations $\P^w_\lambda$  and $\E_\lambda^w$, for $\lambda$ being a probability measure on $S^E$, are understood similarly. We drop the superscripts $w$ in these notations if $w=0$ and there is no risk of confusion.

\section{The Radon-Nikodym derivative processes}\label{sec:radon}
In this section, we prove some a-priori bounds for the $(\F_t,\P)$-martingales $(D^w_t)$ defined by (\ref{def:D}). These bounds will play a crucial role in Section~\ref{sec:weak} and Section~\ref{sec:wass} for limit theorems of the game density processes.

Recall that, for each fixed $w\in [0,\overline{w}]$,  $D^w$ is a Dol\'eans-Dade exponential martingale:
\begin{align}
D^w_t=\mathcal  E (L^w)_t\stackrel{\rm def}{=}\exp\left(L^w_t-\frac{1}{2}\langle (L^w)^c,(L^w)^c\rangle_t\right)\prod_{s:s\leq t}(1+\Delta L^w_s)\exp\left(-\Delta L^w_s\right).\label{DM}
\end{align}
Here, the stochastic logarithm $L^w$ of $D^w$ is defined
with respect to the compensated $(\F_t,\P)$-Poisson processes: 
\begin{align}\label{comp}
\widehat{\Lambda}_t(x,y)\equiv \Lambda_t(x,y)-q(x,y)t,\quad x,y\in E,
\end{align}
as the following $(\F_t,\P)$-martingale:
\begin{align}
L_t^w= &\sum_{x,y\in E}\int_0^t\left(\frac{q^w(x,y,\xi_{s-})}{q(x,y)}-1\right)d\widehat{\Lambda}_s(x,y),\label{eq:Mw1}
\end{align}
which has a zero continuous part $(L^w)^c\equiv 0$. In (\ref{eq:Mw1}) and what follows, we use the convention that $0/0=0$.
The equation (\ref{DM}) implies that $D^w$ is the pathwise unique solution to the linear equation    
\begin{align}\label{D:si}
D_t^w=1+\int_0^t D^w_{s-}dL^w_s=1+\sum_{x,y\in E}\int_0^t D^w_{s-}\left(\frac{q^w(x,y,\xi_{s-})}{q(x,y)}-1\right)d\widehat{\Lambda}_s(x,y),
\end{align}
where the last equality follows from (\ref{eq:Mw1}). See \cite[Theorem I.4.61]{JS} for these properties of $D^w$.

In the sequel, $\ms P(U)$ denotes the set of probability measures defined on a Polish space $U$. Also,
recall that $\pi$ denotes the unique stationary distribution of a voting kernel $q$.

\begin{prop}\label{prop:Dbdd}
For every $a\in [1,\infty)$, there is a positive constant $C_{\ref{Dbdd0}}$ depending only on $(\Pi,a)$ such that for all $w\in [0,\overline{w}]$ and $\lambda\in \ms P(S^E)$, 
\begin{align}\label{Dbdd0}
(D_t^w)^a\exp\left(-C_{\ref{Dbdd0}}w^2\pi_{\min}^{-1}\sum_{\ell=1}^4\int_0^t W_\ell(\xi_s)ds\right)\quad\mbox{is an $(\F_t,\P_\lambda)$- supermartingale,}
\end{align}
where $\pi_{\min}=\min_{x\in E}\pi(x)$ and $W_\ell(\xi)$'s are weighted two-point density functions defined by
\begin{align}
\label{def:Pell}
W_\ell(\xi)=& \sum_{x,y\in E}\pi(x)q^{\ell}(x,y)\xi(x)\dxi(y),\quad \ell\geq 1.
\end{align}
In particular, there is a positive constant $C_{\ref{cond:ui}}$ depending only on $(\Pi,a)$ such that
for all $w\in[0, \overline{w}]$, $\lambda\in \ms P(S^E)$ and $(\F_t)$-stopping times $T'$, we have
\begin{align}\label{cond:ui}
\E_\lambda[(D^w_{T'})^a]\leq \E_\lambda\left[\exp\left( C_{\ref{cond:ui}}w^2\pi^{-1}_{\min}\sum_{\ell=1}^4\int_0^{T'} W_\ell(\xi_t)dt \right)\right]^{1/2}.
\end{align} 
\end{prop}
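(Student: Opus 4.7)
The plan is to apply It\^o's formula to $(D^w_t)^a$ in its Dol\'eans-Dade form, identify the predictable finite-variation part $A^{(a)}_t$ arising from the jumps, bound its density with respect to $dt$ by $(D^w_{t-})^a \cdot Cw^2\pi_{\min}^{-1}\sum_\ell W_\ell(\xi_t)$, and then conclude (\ref{Dbdd0}) by the standard It\^o product-rule argument. The bound (\ref{cond:ui}) will follow from (\ref{Dbdd0}) applied at exponent $2a$ combined with Cauchy--Schwarz.

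Starting from the SDE (\ref{D:si}), a direct jump computation gives $(D^w_t)^a = 1 + N^{(a)}_t + A^{(a)}_t$ with $N^{(a)}$ a local martingale and
\[
dA^{(a)}_t = (D^w_{t-})^a R_a(\xi_{t-})\,dt,\qquad R_a(\xi)=\sum_{x,y:\,q(x,y)>0}q(x,y)\,\Phi_a\!\left(\tfrac{q^w(x,y,\xi)}{q(x,y)}\right),
\]
where $\Phi_a(1+u):=(1+u)^a-1-au$. By (\ref{def:w0}) one has $|f^w-1|\le 1/2$, so $\overline f(x,\xi):=\sum_z q(x,z)f^w(z,\xi)\ge 1/2$ and $|q^w/q-1|\le C_\Pi w\le 1/2$; Taylor's theorem then yields $|\Phi_a(1+u)|\le C_a u^2$ on $|u|\le 1/2$. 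Using the identity $q^w(x,y,\xi)/q(x,y)-1 = w(P_\Pi(y,\xi)-g_\Pi(x,\xi))/\overline f(x,\xi)$, where $P_\Pi(y,\xi):=\sum_v q(y,v)\Pi(\xi(y),\xi(v))$ and $g_\Pi(x,\xi):=\sum_z q(x,z)P_\Pi(z,\xi)$, one obtains
\[
R_a(\xi)\le C w^2\sum_{x,y}q(x,y)(P_\Pi(y,\xi)-g_\Pi(x,\xi))^2.
\]

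The key technical step is controlling the right-hand side by $C\pi_{\min}^{-1}\sum_{\ell=1}^4 W_\ell(\xi)$. Inserting $\pi(x)\ge\pi_{\min}$ and applying Jensen's inequality $(P_\Pi(y)-g_\Pi(x))^2\le\sum_z q(x,z)(P_\Pi(y)-P_\Pi(z))^2$, it suffices to bound $\sum_{x,y,z}\pi(x)q(x,y)q(x,z)(P_\Pi(y,\xi)-P_\Pi(z,\xi))^2$. Writing $\Pi(\sigma,\tau)$ as an affine bilinear polynomial in $(\sigma,\tau)$ and using $\xi(y)^2=\xi(y)$ yields $P_\Pi(\cdot,\xi)=\alpha_\Pi+\beta_\Pi\xi+\gamma_\Pi p_1(\cdot,\xi)+\delta_\Pi\,\xi\cdot p_1(\cdot,\xi)$ for $\Pi$-dependent constants, and hence $|P_\Pi(y,\xi)-P_\Pi(z,\xi)|\le C_\Pi(|\xi(y)-\xi(z)|+|p_1(y,\xi)-p_1(z,\xi)|)$. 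Squaring and summing, the $\xi$-piece equals $2W_2(\xi)$ via the binary identity $(\xi(y)-\xi(z))^2=\xi(y)\dxi(z)+\dxi(y)\xi(z)$ together with the reversibility of $q$; the $p_1$-piece equals $2(W_4(\xi)-W_2(\xi))\ge 0$ via direct expansion and the reversibility-based formulas $\sum_y\pi(y)p_1(y,\xi)^2=p_1(\xi)-W_2(\xi)$ and $\sum_x\pi(x)(qp_1(\cdot,\xi))(x)^2=p_1(\xi)-W_4(\xi)$. Hence $R_a(\xi)\le C_{\ref{Dbdd0}}w^2\pi_{\min}^{-1}\sum_{\ell=1}^4 W_\ell(\xi)$ (the sum-form bound is loose, since only $W_2,W_4$ actually appear).

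With this drift estimate, set $\phi_t:=\exp(-C_{\ref{Dbdd0}}w^2\pi_{\min}^{-1}\int_0^t\sum_\ell W_\ell(\xi_s)\,ds)$. Since $\phi$ is continuous with finite variation, It\^o's product rule gives $d((D^w_t)^a\phi_t)\le\phi_t\,dN^{(a)}_t$, so $(D^w)^a\phi$ is a non-negative local supermartingale and hence a supermartingale by Fatou---proving (\ref{Dbdd0}). For (\ref{cond:ui}), apply (\ref{Dbdd0}) at exponent $2a$ with constant $C':=C_{\ref{Dbdd0}}(2a)$ and use Cauchy--Schwarz:
\[
\E_\lambda[(D^w_{T'})^a]\le\E_\lambda\!\left[(D^w_{T'})^{2a}e^{-C'w^2\pi_{\min}^{-1}\int_0^{T'}\sum_\ell W_\ell(\xi_t)\,dt}\right]^{1/2}\!\E_\lambda\!\left[e^{C'w^2\pi_{\min}^{-1}\int_0^{T'}\sum_\ell W_\ell(\xi_t)\,dt}\right]^{1/2};
\]
the first factor is bounded by $1$ via optional sampling for non-negative supermartingales (applied first at $T'\wedge n$, then passed to the limit by Fatou), yielding (\ref{cond:ui}) with $C_{\ref{cond:ui}}=C'$. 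The main obstacle is the combinatorial identification of the quadratic sum with a non-negative combination of the $W_\ell$'s via reversibility; the remaining steps are routine Dol\'eans-Dade stochastic calculus.
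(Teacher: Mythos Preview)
Your proof is correct and follows the same overall architecture as the paper's: It\^o's formula applied to $(D^w)^a$, identification and control of the predictable drift, product rule with the exponential compensator, and Cauchy--Schwarz at exponent $2a$ for the stopping-time bound. The genuine difference lies in the technical step bounding the drift $R_a(\xi)$.

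The paper proceeds via a \emph{first-order} combinatorial bound: it shows directly that
\[
\sum_{x,y}\pi(x)q(x,y)\,|A(x,\xi)-B(y,\xi)|\;\le\; C_\Pi\sum_{\ell=1}^4 W_\ell(\xi),
\]
arguing that $A(x,\xi)\ne B(y,\xi)$ with $q(x,y)>0$ forces a $1$--$0$ discrepancy among sites at $q$-distance at most three from $x$, and then absorbs $(q^w/q)^a-1-a(q^w/q-1)=O(w^2|A-B|)$ into that bound. Your route is instead \emph{quadratic}: you bound $\Phi_a(1+u)$ by $C_a u^2$, apply Jensen to replace $g_\Pi(x)$ by $P_\Pi(z)$, expand $P_\Pi$ bilinearly in $(\xi,p_1(\cdot,\xi))$, and reduce everything to the Dirichlet-form identities $\sum_y\pi(y)p_1(y,\xi)^2=p_1(\xi)-W_2(\xi)$ and $\sum_x\pi(x)(q p_1)(x,\xi)^2=p_1(\xi)-W_4(\xi)$, together with the spectral fact $W_4\ge W_2$. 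Your argument is more explicit and in fact yields the sharper conclusion that only $W_2$ and $W_4$ are needed; the paper's argument is shorter but less transparent about which $W_\ell$ actually matter. Both rely essentially on reversibility of $q$.

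One minor imprecision: the claim $|q^w/q-1|\le C_\Pi w\le 1/2$ need not hold literally for all $w\le\overline w$ (the constant $C_\Pi$ can exceed $1/(2\overline w)$). What is true, and sufficient, is that $|f^w-1|\le 1/2$ gives $q^w/q\in[1/3,3]$, so $u=q^w/q-1$ lies in a fixed compact subset of $(-1,\infty)$ on which $|\Phi_a(1+u)|\le C_a u^2$ holds. This does not affect the argument. Your passage from local supermartingale to supermartingale via Fatou is also slightly different from the paper, which verifies the martingale property of the stochastic-integral terms directly using the a-priori $L^p$-bound $\sup_{s\le t}\E_\xi[(D^w_s)^a]<\infty$; both approaches are valid.
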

\begin{proof} 
Fix $w\in [0,\overline{w}]$. Note that we have
\begin{align}\label{Dbdd}
\sup_{s\in [0,t]}\E_\xi\big[(D_s^w)^a\big]<\infty,\quad \forall\;a\in [1,\infty),\; t\in (0,\infty),\;\xi\in S^E,
\end{align}
which follows from 
the fact that $q^w(x,y,\xi),q(x,y)$ and $\big|\log \big(q^w(x,y,\xi)/q(x,y)\big)\big|$ are uniformly bounded in $x,y,\xi$ by the choice of the maximal selection strength $\overline{w}$ in (\ref{def:w0}).

To obtain the required supermartingale property in (\ref{Dbdd0}), we work with the stochastic integral equation in (\ref{D:si}) satisfied by $D^w$. By the chain rule for Stieltjes integrals \cite[Proposition~0.4.6]{Revuz_2005} and (\ref{D:si}), we have
\begin{align}
(D_t^w)^a=&1+\sum_{x,y\in E}\int_0^t a(D^w_{s-})^{a-1}\cdot D_{s-}^w\left(\frac{q^w(x,y,\xi_{s-})}{q(x,y)}-1\right)d\widehat{\Lambda}_s(x,y)\notag\\
&+\sum_{s:0< s\leq t}\big((D_s^w)^a-(D_{s-}^w)^a-a(D^w_{s-})^{a-1}\Delta D_s^w\big)\notag\\
\begin{split}\label{Daeq}
=&1+\sum_{x,y\in E}\int_0^t a(D^w_{s-})^{a}\left(\frac{q^w(x,y,\xi_{s-})}{q(x,y)}-1\right)d\widehat{\Lambda}_s(x,y)\\
&+\sum_{x,y\in E}\int_0^t(D_{s-}^w)^a
\left[\left(\frac{q^w(x,y,\xi_{s-})}{q(x,y)}\right)^a-1-a\left(\frac{q^w(x,y,\xi_{s-})}{q(x,y)}-1\right)\right]d \Lambda_s(x,y),
\end{split}
\end{align}
where the last equality follows since $D^w_s/D^w_{s-}=q^w(x,y,\xi_{s-})/q(x,y)$ if $\Delta \Lambda_s(x,y)>0$. The second term in (\ref{Daeq}) is a martingale by (\ref{Dbdd}) and the fact that $q^w(x,y,\xi)/q(x,y)$ are uniformly bounded in $x,y,\xi$ (see (\ref{def:qw})). 

Now we handle the integrands in the last sum in (\ref{Daeq}). First, if $q(x,y)>0$, it follows from the definition (\ref{def:qw}) of $q^w$ that $q^w/q$ satisfies the following series expansion in $w$:
\begin{align}
\frac{q^w(x,y,\xi)}{q(x,y)}=&\frac{1-wB(y,\xi)}{1-wA(x,\xi)}\notag\\
=&1+\sum_{i=1}^\infty w^iA(x,\xi)^{i-1}[A(x,\xi)-B(y,\xi)]\label{taylor}\\
\begin{split}
=1+w[A(x,\xi)-B(y,\xi)]
+w^2R^w(x,y,\xi),\label{eq:qwexp0}
\end{split}
\end{align}
where $A,B,R^w$ are functions defined by
\begin{align}
A(x,\xi)&= 1-\sum_{z\in E}q(x,z)\sum_{z'\in E}q(z,z')\Pi\big(\xi(z),\xi(z')\big),\label{def:A}\\
B(y,\xi)&= 1-\sum_{z\in E}q(y,z)\Pi\big(\xi(y),\xi(z)\big)\label{def:B},\\
R^w(x,y,\xi)&= \frac{A(x,\xi)[A(x,\xi)-B(y,\xi)]}{1-wA(x,\xi)}.\label{def:Rw}
\end{align}
Second, observe that  the following inequality is satisfied:
\begin{align}
\sum_{x,y\in E}\pi(x)q(x,y)|A(x,\xi)-B(y,\xi)|
\leq C_{\ref{Dbdd1}}\sum_{\ell=1}^4W_\ell(\xi),\label{Dbdd1}
\end{align}
where the constant $C_{\ref{Dbdd1}}\in (0,\infty)$ depends only on $(\Pi,a)$ and $W_\ell(\xi)$'s are defined by (\ref{def:Pell}). 
To see (\ref{Dbdd1}), recall the reversibility of $q$ and observe that whenever $A(x,\xi)-B(y,\xi)\neq 0$ 
for $x,y$ such that $q(x,y)>0$, we must have 
$\Pi\big(\xi(z),\xi(z')\big)\neq \Pi\big(\xi(y),\xi(z'')\big)$
for some $z,z',z''$ such that $q(x,z)q(z,z')>0$ and $q(y,z'')>0$. In this case, we have either (1) $1\in \{\xi(z),\xi(z')\}$ and $0\in \{\xi(y),\xi(z'')\}$ or (2) $0\in \{\xi(z),\xi(z')\}  $   
 and $1\in \{\xi(y),\xi(z'')\}$. 
Then using (\ref{taylor}),
we consider the first-order Taylor expansions around $0$ of the two functions
 $w\mapsto  (q^w/q)^a-1$ and $w\mapsto a(q^w/q-1)$ and see that both of them take the same form as follows: 
\[
wa(A-B)+\mathcal O(w^2).
\] 
These Taylor expansions give (\ref{Dbdd1}) since
 the derivative of $w\mapsto q^w/q$ at zero of any order is bounded by $|A-B|$ up to a multiplicative constant depending only on $\Pi$ by (\ref{taylor}).
Third, by (\ref{Dbdd1}), we obtain
 that, for all $\xi\in S^E$, 
\begin{align}
&\sum_{x,y\in E}\left|\left(\frac{q^w(x,y,\xi)}{q(x,y)}\right)^a-1-a\left(\frac{q^w(x,y,\xi)}{q(x,y)}-1\right)\right|q(x,y)\leq C_{\ref{Dbdd2}}w^2\pi_{\min}^{-1}\sum_{\ell=1}^4W_\ell(\xi),\label{Dbdd2}
\end{align}
where the constant $C_{\ref{Dbdd2}}\in (0,\infty)$ depends only on $(\Pi,a)$.

We are ready to prove the required supermartingale property in (\ref{Dbdd0}) with the choice $C_{\ref{Dbdd0}}=C_{\ref{Dbdd2}}$. Write $A_t$ for the continuous process
$\int_0^t C_{\ref{Dbdd0}}w^2\pi_{\min}^{-1}\sum_{\ell=1}^4W_\ell(\xi_s)ds$. Then by integration by parts (cf. \cite[Proposition~0.4.5]{Revuz_2005}) and (\ref{Daeq}), we get 
\begin{align*}
(D_t^w)^ae^{-A_t}
=&1+\int_0^t (D^w_s)^ae^{-A_s}\left(-C_{\ref{Dbdd0}}w^2\pi^{-1}_{\min}\sum_{\ell=1}^4W_\ell(\xi_s)\right)ds\\
&+\int_0^t(D_{s-}^w)^a e^{-A_s}\sum_{x,y\in E}
\left[\left(\frac{q^w(x,y,\xi_{s-})}{q(x,y)}\right)^a-1-a\left(\frac{q^w(x,y,\xi_{s-})}{q(x,y)}-1\right)\right]q(x,y)ds\\
&+\sum_{x,y\in E}\int_0^t a(D^w_{s-})^{a}e^{-A_s}\left(\frac{q^w(x,y,\xi_{s-})}{q(x,y)}-1\right)d\widehat{\Lambda}_s(x,y)\\
&+\sum_{x,y\in E}\int_0^t(D_{s-}^w)^a e^{-A_s}
\left[\left(\frac{q^w(x,y,\xi_{s-})}{q(x,y)}\right)^a-1-a\left(\frac{q^w(x,y,\xi_{s-})}{q(x,y)}-1\right)\right]d \widehat{\Lambda}_s(x,y),
\end{align*}
where the sum of the two Riemann-integral terms is nonpositive by (\ref{Dbdd2}) and the choice that $C_{\ref{Dbdd0}}=C_{\ref{Dbdd2}}$, and the last two sums 
are both finite sums of $(\F_t,\P)$-martingales. The foregoing equality is enough for (\ref{Dbdd0}).

The second assertion of the proposition is a simple application of the first assertion. We use the supermartingale in (\ref{cond:ui}) with $a$ replaced by $2a$ and get from the Cauchy-Schwarz inequality that 
\begin{align*}
\E_\lambda[(D^w_{T'})^a]\leq &\E_\lambda\left[\left((D^w_{T'})^a\exp\left(-\frac{C_{\ref{Dbdd0}}(2a)}{2}w^2\pi_{\min}^{-1}\sum_{\ell=1}^4\int_0^{T'}W_\ell(\xi_s)ds\right)\right)^2\right]^{1/2}\\
&\times \E_\lambda\left[\exp\left(\frac{C_{\ref{Dbdd0}}(2a)}{2}w^2\pi_{\min}^{-1}\sum_{\ell=1}^4\int_0^{T'}W_\ell(\xi_s)ds\right)^2\right]^{1/2}
\end{align*}
The required inequality follows from the foregoing inequality and  the optional stopping theorem \cite[Theorem~II.3.3]{Revuz_2005} (this leads to the choice $C_{\ref{cond:ui}}=C_{\ref{Dbdd0}}(2a)$). The proof is complete.
\end{proof}

In the rest of this section, we turn to the predictable covariation between $D^w$ and the density process 
\begin{align}\label{def:Y}
Y_t\stackrel{\rm def}{=} p_1(\xi_t)
\end{align}
as well as their own predictable quadratic variations, where the function $p_1(\xi)$ is defined by (\ref{def:p1}). Recall that $\widehat{\Lambda}_t(x,y)$ denote the compensated $(\F_t,\P)$-Poisson processes defined by (\ref{comp}) and $\widehat{\Lambda}^\sigma_t(x)$ are similarly defined.
With the stochastic integral equation satisfied by $D^w$ already given in (\ref{D:si}), the other process
$Y$ satisfies the following equation by (\ref{eq:voter}) and the reversibility of $q$: 
\begin{align}
 Y_t
=&\, Y_0
+\int_0^t[\mu(1)(1-Y_s)-\mu(0)Y_s] ds+M_t,\label{eq:Y}
\end{align}
where $M$ is an $(\F_t,\P)$-martingale defined by
\begin{align}
\begin{split}\label{eq:M}
M_t=&\sum_{x,y\in E}\pi(x)\int_0^t [\xi_{s-}(y)-\xi_{s-}(x)]d\widehat{\Lambda}_s(x,y)\\
&\hspace{2cm}+\sum_{x\in E}\pi(x)\int_0^t \widehat{\xi}_{s-}(x)d\widehat{\Lambda}^1_s(x)-\sum_{x\in E}\pi(x)\int_0^t \xi_{s-}(x)d\widehat{\Lambda}^0_s(x).
\end{split}
\end{align}

\begin{lem}\label{lem:qv}
Fix $w\in [0,\overline{w}]$. Then under $\P$, we have  
\begin{align}
\begin{split}
\langle M,M\rangle_t=&\int_0^t\sum_{x,y\in E}\nu(x,y)\big[\dxi_s(x)\xi_s(y)+\xi_s(x)\dxi_s(y)\big]ds\\
&+\int_0^t\sum_{x\in E}\pi(x)^2\big[\,\dxi_s(x)\mu(1)+ \xi_s(x)\mu(0)\big]ds,\label{MM}
\end{split}\\
\label{Dp}
\langle M,D^w\rangle_t=&\;w\int_0^t D^w_s\overline{D}(\xi_s)ds
+w^2\int_0^t D^w_sR^w_1(\xi_s)ds,\\
\begin{split}
\langle D^w,D^w\rangle_t=&\;w^2\int_0^t (D^w_s)^2\sum_{x,y\in E}q(x,y)[A(x,\xi_s)-B(y,\xi_s)]^2ds\\
&+w^3\int_0^t (D^w_s)^2R^w_2(\xi_s)ds,\label{DD}
\end{split}
\end{align}
where
\begin{align}
\nu(x,y)= &\pi(x)^2q(x,y)\1_{x\neq y}=\pi(x)^2q(x,y),\quad x,y\in E,\label{def:nu}
\end{align}
and, for $A,B,R^w$ defined by (\ref{def:A}), (\ref{def:B}) and (\ref{def:Rw}), the functions $\overline{D},R^w_1,R^w_2$ in (\ref{Dp}) and (\ref{DD}) are defined by 
\begin{align}
\overline{D}(\xi)&=\sum_{x,y\in E}\pi(x)q(x,y)[\xi(y)-\xi(x)][A(x,\xi)-B(y,\xi)],\label{def:Dbar1}\\
R^w_1(\xi)&=\sum_{x,y\in E}\pi(x)q(x,y)[\xi(y)-\xi(x)]R^w(x,y,\xi),\label{def:R1w}\\
R^w_2(\xi)&=\sum_{x,y\in E}q(x,y)\big\{2[A(x,\xi)-B(y,\xi)]R^w(x,y,\xi)+wR^w(x,y,\xi)^2\big\}.\notag
\end{align}
\end{lem}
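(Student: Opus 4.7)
The plan is to compute each of the three predictable covariations by direct application of the standard formula: if $H$ and $K$ are suitably integrable predictable processes and $\widehat{\Lambda}$ is the compensated version of a Poisson process $\Lambda$ with deterministic intensity $\eta$, then
\[
\Big\langle \int_0^\cdot H_s\,d\widehat{\Lambda}_s,\;\int_0^\cdot K_s\,d\widehat{\Lambda}_s\Big\rangle_t=\int_0^t H_sK_s\,\eta\,ds,
\]
while the analogous covariation vanishes when the two integrators are compensated versions of \emph{independent} Poisson processes. Thus each computation reduces to extracting the integrand representations of $M$ and $D^w$ from their defining stochastic integral equations (namely (\ref{eq:M}) for $M$ and (\ref{D:si}) for $D^w$), grouping terms by which Poisson process drives them, and assembling the answer.

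For $\langle M,M\rangle_t$, the three families $\Lambda(x,y)$, $\Lambda^1(x)$, $\Lambda^0(x)$ are mutually independent and, within each family, the processes for different indices are independent. Hence the cross terms drop, and only the diagonal pieces survive. The squared integrand $(\xi_{s-}(y)-\xi_{s-}(x))^2$ collapses to $\dxi_s(x)\xi_s(y)+\xi_s(x)\dxi_s(y)$ because $\xi$ takes values in $\{0,1\}$; similarly $\dxi(x)^2=\dxi(x)$ and $\xi(x)^2=\xi(x)$. Multiplying by the intensities $q(x,y)$ and $\mu(\sigma)$ and pulling out $\pi(x)^2$ yields exactly (\ref{MM}) with $\nu$ as in (\ref{def:nu}); the identity $\nu(x,y)=\pi(x)^2q(x,y)\1_{x\neq y}=\pi(x)^2q(x,y)$ uses the standing assumption $q(x,x)\equiv 0$.

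For $\langle M,D^w\rangle_t$, I note that $D^w$ is driven only by the $\Lambda(x,y)$'s, so only the first sum in (\ref{eq:M}) contributes. Pairing integrands across the two processes produces an integral of the form
\[
\int_0^t D^w_s\sum_{x,y\in E}\pi(x)q(x,y)\bigl[\xi_s(y)-\xi_s(x)\bigr]\Big(\tfrac{q^w(x,y,\xi_s)}{q(x,y)}-1\Big)\,ds.
\]
Inserting the already-established expansion (\ref{eq:qwexp0}), namely $q^w/q-1=w[A-B]+w^2R^w$, splits this into the two pieces defining $\overline{D}$ and $R_1^w$, giving (\ref{Dp}).

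For $\langle D^w,D^w\rangle_t$, the same independence argument reduces it to a single sum over $(x,y)$ with integrand $(D^w_{s-})^2(q^w/q-1)^2q(x,y)$. Expanding the square of $w[A-B]+w^2R^w$ gives a leading $w^2(A-B)^2$ term and an $\mathcal O(w^3)$ remainder $w^3\{2(A-B)R^w+w(R^w)^2\}$, which is exactly $w^3R_2^w$ after summation against $q(x,y)$. This produces (\ref{DD}). The only things one must check for rigor are the integrability conditions justifying these quadratic variation formulas: the integrands $\pi(x)(\xi_{s-}(y)-\xi_{s-}(x))$ are bounded, and the integrands involving $D^w_{s-}$ are controlled via the moment bound (\ref{Dbdd}) from Proposition~\ref{prop:Dbdd} together with the uniform boundedness of $q^w/q-1$ in $(x,y,\xi)$ for $w\in[0,\overline{w}]$. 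I expect no genuine obstacle; the main point is merely the careful bookkeeping of the three independent Poisson families and consistent use of the expansion (\ref{eq:qwexp0}) to separate the leading-order term from the higher-order remainder.
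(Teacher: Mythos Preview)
Your proposal is correct and follows essentially the same route as the paper: compute the predictable covariations directly from the stochastic integral representations (\ref{eq:M}) and (\ref{D:si}) using independence of the driving Poisson processes, simplify $[\xi(y)-\xi(x)]^2=\dxi(x)\xi(y)+\xi(x)\dxi(y)$ for (\ref{MM}), and insert the expansion (\ref{eq:qwexp0}) of $q^w/q-1$ to obtain (\ref{Dp}) and (\ref{DD}). The paper's proof is slightly terser and does not spell out the integrability check you mention, but the argument is otherwise identical.
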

\begin{proof}
Recall that the rates of the driving Poisson processes $\Lambda(x,y)$ and $\Lambda^\sigma(x)$ under $\P$ are given by (\ref{rates}).
Hence,
by (\ref{D:si})  and (\ref{eq:M}), we have
\begin{align}
\begin{split}
\langle M,M\rangle_t=&\int_0^t\sum_{x,y\in E}\pi(x)^2q(x,y)[\xi_s(y)-\xi_s(x)]^2ds\\
&+\int_0^t\sum_{x\in E}\pi(x)^2[\, \dxi_s(x)\mu(1)+ \xi_s(x)\mu(0)]ds,
\end{split}\\
\langle M,D^w\rangle_t
=&\sum_{x,y\in E }\pi(x)q(x,y)
\int_0^tD^w_s[\xi_{s}(y)-\xi_{s}(x)]\left(\frac{q^w(x,y,\xi_{s})}{q(x,y)}-1\right)ds,\label{MD1}\\
\langle D^w,D^w\rangle_t=&\sum_{x,y\in E }q(x,y)\int_0^t (D^w_s)^2\left(\frac{q^w(x,y,\xi_s)}{q(x,y)}-1\right)^2ds.\label{DD1}
\end{align}
  The first equation above gives (\ref{MM}), upon using the notation in (\ref{def:nu}) and the equality \begin{align}\label{xixy}
[\xi(y)-\xi(x)]^2=\dxi(x)\xi(y)+\xi(x)\dxi(y).
\end{align} 
For (\ref{Dp}) and (\ref{DD}), we apply the Taylor expansion (\ref{eq:qwexp0}) of $q^w/q$ in $w$ to (\ref{MD1}) and (\ref{DD1}). 
\end{proof}

 The following lemma shows some moment bounds for $\langle D^w,D^w\rangle$ and $\langle M,D^w\rangle$ under $\P$.

\begin{lem}\label{lem:qvbdd} 
For all $a\in [1,\infty)$, we can find positive constants $C_{\ref{tight_bdd1}}$ and $C_{\ref{tight_bdd2}}$ depending only on $(\Pi,a)$ such that for all $\lambda\in \ms P(S^{E})$,
\begin{align}\label{tight_bdd1}
\begin{split}
&\E_\lambda\big[\langle D^{w},D^{w}\rangle_t^a\big]\leq  C_{\ref{tight_bdd1}}\sum_{\ell=1}^4
\E_{\lambda}\left[\exp\left(C_{\ref{tight_bdd1}}w^2 \pi_{\min}^{-1}
\int_0^t W_\ell(\xi_{s})ds\right)\right]^{1/2}\\
&\hspace{4cm}\times \E_{\lambda}\left[\left(w^2\pi_{\min}^{-1}
\int_0^t W_\ell(\xi_{s})ds\right)^{2a}\right]^{1/2},
\end{split}\\
\begin{split}\label{tight_bdd2}
&\E_\lambda\big[{\rm Var}\big(\langle M,D^{w}\rangle\big)_t^a\big]
\leq C_{\ref{tight_bdd2}}\sum_{\ell=1}^4\E_{\lambda}\left[\exp\left(C_{\ref{tight_bdd2}}w^2\pi^{-1}_{\min}
\int_0^t W_\ell(\xi_{s})ds\right)\right]^{1/2}\\
&\hspace{4cm}\times\E_{\lambda}\left[\left(w
\int_0^t W_\ell(\xi_{s})ds\right)^{2a}\right]^{1/2},
\end{split}
\end{align}
where ${\rm Var}(A)$ denotes the total variation process for $A$.
\end{lem}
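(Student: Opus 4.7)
The plan is to start from the integral representations of $\langle D^w,D^w\rangle$ and $\langle M,D^w\rangle$ given by Lemma~\ref{lem:qv}, bound their state-dependent integrands pointwise in terms of the weighted two-point densities $W_\ell(\xi)$, and then Cauchy-Schwarz-split the $D^w$-part from the $\int W_\ell$-part so that Doob's $L^p$ inequality combined with Proposition~\ref{prop:Dbdd} can absorb the former into an exponential moment of $\sum_\ell\int W_\ell$.

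For (\ref{tight_bdd1}), I start with (\ref{DD}). The uniform boundedness of $A$, $B$, $(1-wA)^{-1}$ on $S^E$ (by the choice of $\overline{w}$ in (\ref{def:w0})), together with the pointwise estimate (\ref{Dbdd1}) from the proof of Proposition~\ref{prop:Dbdd}, yields
\begin{equation*}
\sum_{x,y\in E}q(x,y)\big\{[A(x,\xi)-B(y,\xi)]^2+w|R^w_2(\xi)|\big\}\leq C\pi_{\min}^{-1}\sum_{\ell=1}^4 W_\ell(\xi),
\end{equation*}
where the factor $\pi_{\min}^{-1}$ arises once in converting the $\pi(x)q(x,y)$-weighted sum in (\ref{Dbdd1}) to a $q(x,y)$-weighted sum. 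Using $w^3\leq w^2\overline{w}$ and pulling $\sup_{s\leq t}(D^w_s)^2$ out of the time integral gives $\langle D^w,D^w\rangle_t\leq Cw^2\pi_{\min}^{-1}\sup_{s\leq t}(D^w_s)^2\sum_\ell\int_0^t W_\ell(\xi_s)\,ds$. Raising to the $a$-th power, applying $(\sum_\ell a_\ell)^a\leq 4^{a-1}\sum_\ell a_\ell^a$ and then Cauchy-Schwarz under $\E_\lambda$, then Doob's $L^{4a}$-inequality (valid since $4a>1$) to reduce $\E_\lambda[\sup_{s\leq t}(D^w_s)^{4a}]$ to $C\E_\lambda[(D^w_t)^{4a}]$, and finally Proposition~\ref{prop:Dbdd} at exponent $4a$, delivers the right-hand side of (\ref{tight_bdd1}) up to one cosmetic mismatch: the exponential produced contains $\sum_{\ell'=1}^4\int_0^t W_{\ell'}$ rather than a single $\int_0^t W_\ell$. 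This is removed by $\exp(\sum_{\ell'}a_{\ell'})\leq\tfrac14\sum_{\ell'}\exp(4a_{\ell'})$ for $a_{\ell'}\geq 0$, after which the exponent $1/4$ from Doob/Proposition~\ref{prop:Dbdd} can be upgraded to $1/2$ using $y^{1/4}\leq y^{1/2}$ for $y\geq 1$ (valid since $\exp\geq 1$ on nonnegative arguments). Relabeling constants gives (\ref{tight_bdd1}).

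The proof of (\ref{tight_bdd2}) is entirely parallel. From (\ref{Dp}),
\begin{equation*}
\mathrm{Var}(\langle M,D^w\rangle)_t\leq\int_0^t D^w_s\big[w|\overline{D}(\xi_s)|+w^2|R^w_1(\xi_s)|\big]\,ds,
\end{equation*}
and (\ref{Dbdd1}) gives $|\overline{D}(\xi)|+|R^w_1(\xi)|\leq C\sum_\ell W_\ell(\xi)$ directly, with no factor $\pi_{\min}^{-1}$ since the $\pi$-weight is already built into $\overline{D}$ and $R^w_1$. Using $w^2\leq\overline{w}\,w$ and running the same Cauchy-Schwarz / Doob (at exponent $2a$ this time) / Proposition~\ref{prop:Dbdd} pipeline yields (\ref{tight_bdd2}). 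The only mildly delicate aspect throughout is the bookkeeping needed to ensure that exactly one factor $\pi_{\min}^{-1}$ ends up inside each exponential and inside each $W_\ell$-integral in the final bound, as required by the statement; beyond this, and beyond the two key inputs (\ref{Dbdd1}) and Proposition~\ref{prop:Dbdd}, everything is routine Hölder/Doob manipulation.
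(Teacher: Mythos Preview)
Your proof is correct and follows essentially the same route as the paper: both start from the explicit covariation formulas (the paper uses (\ref{DD1}) and (\ref{MD1}) directly, you use the equivalent (\ref{DD}) and (\ref{Dp})), invoke the key pointwise bound (\ref{Dbdd1}) to control the integrands by $\pi_{\min}^{-1}\sum_\ell W_\ell$ (with the $\pi_{\min}^{-1}$ appearing only in the $\langle D^w,D^w\rangle$ case, as you note), then pull out $\sup_{s\leq t}(D^w_s)^{2}$ (resp.\ $\sup_{s\leq t}D^w_s$), apply Cauchy--Schwarz and Doob's $L^{4a}$ (resp.\ $L^{2a}$) inequality, and finish with (\ref{cond:ui}). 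The ``cosmetic'' passage from the double sum $\sum_\ell(\sum_{\ell'}A_{\ell'})B_\ell$ to the single-sum form in the statement is handled just as loosely in the paper (``by some elementary inequalities\dots (\ref{tight_bdd1}) is then implied by (\ref{Dqv1})'') as in your write-up; in either case the downstream applications only use the separated-factor structure, so this is harmless.
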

\begin{proof}
By (\ref{taylor}), (\ref{Dbdd1}) and (\ref{DD1}), we obtain 
 \begin{align}
\E_\lambda\big[\langle D^w,D^w\rangle_t^a\big]
\leq &C_{\ref{tight_bdd1-1}}\sum_{\ell=1}^4\E_\lambda\left[\left(
\int_0^t \big(D_s^w\big)^2
w^2 \pi_{\min}^{-1}W_\ell(\xi_{s})ds\right)^a\right]\label{tight_bdd1-1}\\
\leq &C_{\ref{tight_bdd1-1}}\sum_{\ell=1}^4
\E_\lambda[(D_t^w)^{4a}]^{1/2}\times
\E_{\lambda}\left[\left(w^2\pi_{\min}^{-1}
\int_0^t W_\ell(\xi_{s})ds\right)^{2a}\right]^{1/2}\label{Dqv}\\
\begin{split}\label{Dqv1}
\leq &C_{\ref{Dqv1}}\sum_{\ell=1}^4
\left(\sum_{\ell'=1}^4\E_\lambda\left[\exp\left(C_{\ref{Dqv1}}w^2\pi^{-1}_{\min}\int_0^t W_{\ell'}(\xi_s)ds\right)\right]^{1/2}\right)\\
&\times \E_{\lambda}\left[\left(w^2\pi_{\min}^{-1}
\int_0^t W_\ell(\xi_{s})ds\right)^{2a}\right]^{1/2}.
\end{split}
\end{align}
Here, the positive constants $C_{\ref{tight_bdd1-1}}$ and $C_{\ref{Dqv1}}$ depend only on $(\Pi,a)$, (\ref{Dqv}) follows from the Cauchy-Schwarz inequality and Doob's strong $L^p$-inequality \cite[Theorem~II.1.7]{Revuz_2005}
since $(D^{w})^{4a}$ is a submartingale,
and (\ref{Dqv1}) follows from (\ref{cond:ui}) and some elementary inequalities. The inequality (\ref{tight_bdd1}) 
is then implied by (\ref{Dqv1}). 

The proof of (\ref{tight_bdd2}) is similar. We use (\ref{taylor}), (\ref{Dbdd1}) and (\ref{MD1}) and get
\begin{align}\label{tight_bdd1-2}
\E_\lambda\big[{\rm Var}\big(\langle M,D^{w}\rangle\big)_t^a\big]\leq C_{\ref{tight_bdd1-2}}\sum_{\ell=1}^4\E_\lambda\left[\left(\int_0^t D^w_swW_\ell(\xi_s)ds\right)^a\right].
\end{align}
This leads to (\ref{tight_bdd2}) upon applying the same arguments as those for (\ref{Dqv}) and (\ref{Dqv1}). The proof is complete.
\end{proof}

Equation~(\ref{MM}) and the inequalities in Lemma~\ref{lem:qvbdd} show that the voter potential functions $\int_0^\cdot W_\ell(\xi_s)ds$ 
play a key role in bounding the covariations considered in Lemma~\ref{lem:qv}.
We will study these functions in Section~\ref{sec:tight}.

\section{Weak convergence of the game density processes}\label{sec:weak}
Our goal in this section is to study the density processes of $1$'s in the evolutionary games. 
Let a sequence of voting kernels $(E_n,q^{(n)})$ and a sequence of mutation measures $\mu_n$ defined on $S$ be given, where $N_n=\#E_n $ increases to infinity. 
To apply the method of equivalence of laws outlined in Section~\ref{intro:1.3}, we consider the following vector semimartingale under $\P^{(n)}$ for each $n\in \Bbb N$:
\begin{align}\label{def:Zn}
Z^{(n)}=\big(Y^{(n)}_t,M^{(n)}_t,D^{(n)}_t\big)= \big(Y_{\gamma_n t},M_{\gamma_n t}, D^{w_n}_{\gamma_nt}\big),
\end{align}
where the constants $\gamma_n$ and $w_n$ will be chosen later on such that $\gamma_n$ tends to infinity and $w_n$ tends to zero, respectively. 
Here in (\ref{def:Zn}), for each $n$, $(Y,M,D)$ under $\P^{(n)}$ consists of the processes considered in Section~\ref{sec:poisson} and Section~\ref{sec:radon}
 with respect to the $(E_n,q^{(n)},\mu_n)$-voter model (recall (\ref{def:D}), (\ref{def:Y}) and (\ref{eq:M})).  Notice that the vector semimartingale $Z^{(n)}$ is adapted to the filtration
\begin{align}\label{def:Fn} 
\F^{(n)}_t= \sigma(\xi_{\gamma_n s};s\leq t),\quad 0\leq t<\infty,
\end{align}
where $(\xi_t)$ is understood to be the $(E_n,q^{(n)},\mu_n)$-voter model.

Similar to the above notations, objects defined with respect to a triplet $(E_n,q^{(n)},\mu_n)$ will carry
either subscripts `$n$' or superscripts `$(n)$' whenever necessary. Those where references to $n$ are not made are defined under general voter models.

The arguments in the sequel will use the Feynman-Kac duality between voter models and coalescing Markov chains, which we discuss briefly here and in more detail in Section~\ref{sec:FK} (see also \cite[Section~6]{CC_16} and \cite{Granovsky_1995}). For a triplet $(E,q,\mu)$, the dual process is a system of coalescing $q$-Markov chains $\{B^x;x\in E\}$ on $E$ so that $B^x$'s move along sites of $E$ as rate-$1$ $q$-Markov chains independently before meeting and together afterwards; particular dual functions are given by
\begin{align}\label{def:H}
H(\xi;x,y)=\big[\xi(x)-\overline{\mu}(1)\big]\big[\dxi(y)-\overline{\mu}(0)\big],\quad x,y\in E,
\end{align}
where 
\begin{align}\label{def:mubar}
\overline{\mu}(\sigma)=\mu(\sigma)/\mu(\1)\quad\mbox{with the convention that }0/0=0
\end{align}
and $\mu(\1)$ is the total mass of $\mu$. 
Then the Feynman-Kac duality between the $(E,q,\mu)$-voter model and the coalescing system $\{B^x\}$ gives the following equation:
\begin{align}
\begin{split}
\E_\xi[H(\xi_t;x,y)]=&\E\left[H(\xi;B^{x}_t,B^y_t)\exp\left(-\mu(\1)\int_0^t |B^{\{x,y\}}_s|ds\right)\right]\\
&-\mu(\1)\overline{\mu}(1)\overline{\mu}(0)\E\left[\int_0^t\1_{\{B^x_s=B^y_s\}}\exp\left(-\mu(\1)\int_0^s |B^{\{x,y\}}_r|dr\right)ds\right],
\end{split}\label{MG}
\end{align}
where $B^{\{x,y\}}=\{B^x,B^y\}$ and $|\{x,y\}|$ is the number of distinct points in $\{x,y\}$ (see (\ref{eq:gh2}) for the generator equation of (\ref{MG})). 
It can be shown that by (\ref{MG}), for all $\xi\in S^E$ and $x,y\in E$,
\begin{align}\label{mombdd}
\begin{split}
\big|\E_\xi[\xi_t(x)\dxi_t(y)]-\E[\xi(B^x_t)\dxi(B^y_t)]\big|
\leq &C_{\ref{mombdd}}\big(1-e^{-\mu(\1)t}\big)\P(M_{x,y}>t)\\
&\hspace{1cm}+C_{\ref{mombdd}}\mu(\1)\int_0^t \P(M_{x,y}>s)ds,
\end{split}
\end{align}
where $C_{\ref{mombdd}}$ is a universal constant and $M_{x,y}$ is the first time that $B^x$ and $B^y$ meet. An alternative proof of 
(\ref{mombdd}) by the pathwise duality between voter models and coalescing Markov chains can be found in \cite[Proposition~3.1]{CC_16}.

\subsection{Main theorem}\label{sec:setup}
Let us state four assumptions for the main theorem, Theorem~\ref{thm:main2}, of Section~\ref{sec:weak} to be stated later on.

\begin{ass}[Uniformity in stationary distributions]\label{ass:stat}
The stationary distributions $\pi^{(n)}$'s of the voting kernels $q^{(n)}$ are comparable to uniform distributions in the sense that they satisfy
\begin{align}
\label{diag}
0<\liminf_{n\to\infty}N_n\pi^{(n)}_{\min}\leq \limsup_{n\to\infty}N_n\pi^{(n)}_{\max}<\infty,
\end{align}
where $\pi^{(n)}_{\max}=\max_{x\in E_n}\pi^{(n)}(x)$ and $\pi^{(n)}_{\min}=\min_{x\in E_n}\pi^{(n)}(x)$. 
\qed 
\end{ass}

\begin{ass}[Weak convergence of voter models]\label{ass:vm}
We can choose a sequence of constants $\gamma_n$ growing to infinity such that
the time-changed density processes $Y^{(n)}$ of $1$'s in the $(E_n,q^{(n)},\mu_n)$-voter models defined in (\ref{def:Zn}) satisfy: 
\begin{align}\label{conv:WF}
\big(Y^{(n)},\P^{(n)}_{\lambda_n}\big)\xrightarrow[n\to\infty]{{(\rm d)}} \big(Y,\P^{(\infty)}\big)
\end{align}
for some $\lambda_n\in \ms P(S^{E_n})$. 
Here,  $\xrightarrow[n\to\infty]{{(\rm d )}}$ denotes convergence in distribution, and under $\P^{(\infty)}$,  $Y$ is a Wright-Fisher diffusion obeying the following equation:
\begin{align}\label{eq:WF}
dY_t=[\mu(1)(1-Y_t)-\mu(0) Y_t]dt+\sqrt{Y_t(1-Y_t)}dB_t,
\end{align}
where $\mu=\big(\mu(1),\mu(0)\big)\in\R^2_+$ is a constant vector  and $B$ is a standard Brownian motion. 

In the case that $\sup_n \gamma_n/N_n=\infty$, we also require that (\ref{conv:WF}) apply 
with respect to the same sequence $\{\gamma_n\}$, 
when 
mutation measures are zero and
the initial laws are given by Bernoulli product measures $\beta_u$ with constant densities $\beta_u\{\xi\in S^{E_n};\xi(x)=1\}\equiv u$ for all 
$u\in (0,1)$.
\qed 
\end{ass}

Assumption~\ref{ass:vm} holds if we impose mild mixing conditions on $(E_n,q^{(n)},\mu_n)$. This is  the content of \cite[Theorem~2.2]{CCC} and a particular consequence of \cite[Theorem~4.1]{CC_16}, which are restated below as Theorem~\ref{thm:WF}. Here and in what follows, $\mathbf g_n$ denotes the difference between $1$ and the second largest eigenvalue of $q^{(n)}$, and
\[
\mathbf t^{(n)}_{\rm mix}=\inf\Big\{t\geq 0;\max_{x\in E}\|e^{tq^{(n)}}(x,\,\cdot\,)-\pi^{(n)}\|_{\rm TV}\leq \frac{1}{2e}\Big\}
\]
stands for the mixing time of the rate-$1$ $(E_n,q^{(n)})$-chains, where $\|\lambda\|_{\rm TV}$ is the total variation norm of a signed measure $\lambda$.

\begin{thm}[\cite{CCC,CC_16}]\label{thm:WF}
Let $(E_n,q^{(n)},\mu_n)$ with $N_n\nearrow\infty$, mutation measures $\mu_n$ defined on $S$, and $\lambda_n\in \ms P(S^{E_n})$ be given such that all of the following three properties are satisfied: 
\begin{enumerate}
\item [\rm (i)] $\displaystyle \lim_{n\to\infty}\sum_{x\in E_n}\pi^{(n)}(x)^2=0$, 
\item [\rm (ii)]$\displaystyle \lim_{n\to\infty}\gamma_n\mu_n=\mu$,
\item [\rm (iii)] the sequence
$\{\displaystyle \lambda_n(p_1(\xi)\in \,\cdot\,)\}$ converges weakly to $\widetilde{\lambda}_\infty$
as probability measures on $[0,1]$, 
\end{enumerate}
and at least one of the following two conditions applies: 
\begin{enumerate}
\item [\rm (iv-1)] $\displaystyle \lim_{n\to\infty}\frac{\mathbf t^{(n)}_{\rm mix}}{\gamma_n}=0$,
\item [\rm (iv-2)] $\displaystyle \lim_{n\to\infty}\frac{\log(e\vee \gamma_n \pi^{(n)}_{\max})}{\mathbf g_n\gamma_n}=0$,
\end{enumerate}
with respect to the constant time scales
\begin{align}\label{gammaforvm}
\gamma_n=\sum_{x,y\in E_n}\pi^{(n)}(x)\pi^{(n)}(y)\E^{(n)}[M_{x,y}].
\end{align}
Then (\ref{conv:WF}) holds. 
\end{thm}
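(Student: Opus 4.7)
The plan is to establish \eqref{conv:WF} by identifying each subsequential limit of $\{Y^{(n)}\}$ with the unique solution of the Wright--Fisher martingale problem \eqref{eq:WF}. From the voter-model decomposition \eqref{eq:Y} (with $w=0$), applying the time change by $\gamma_n$ yields
\begin{align*}
Y^{(n)}_t = Y^{(n)}_0 + \int_0^t\bigl[(\gamma_n\mu_n(1))(1-Y^{(n)}_s) - (\gamma_n\mu_n(0))Y^{(n)}_s\bigr]\,ds + M^{(n)}_t,
\end{align*}
where $M^{(n)}_t = M_{\gamma_n t}$ is an $(\F^{(n)}_t,\P^{(n)})$-martingale. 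By hypothesis (ii) the drift coefficient converges to the Wright--Fisher drift, and by (iii) the initial laws converge, reducing the problem to tightness of $\{Y^{(n)}\}$ and to identification of $\langle M^{(n)},M^{(n)}\rangle$.

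For tightness in $D([0,\infty),[0,1])$, I would invoke Aldous' criterion: the drift is uniformly bounded on compacts, while Lemma~\ref{lem:qv} with $w=0$ together with Assumption~\ref{ass:stat} yields $\E\langle M^{(n)},M^{(n)}\rangle_T \leq C\gamma_n T(1+\mu_n(S))\sum_x\pi^{(n)}(x)^2$, which is uniformly bounded by a standard reversibility estimate on $\gamma_n\sum_x\pi^{(n)}(x)^2$ combined with (i)--(ii). Lemma~\ref{lem:qv} further gives
\begin{align*}
\langle M^{(n)},M^{(n)}\rangle_t = 2\gamma_n\int_0^t \sum_{x,y}\nu^{(n)}(x,y)\xi_{\gamma_n s}(x)\dxi_{\gamma_n s}(y)\,ds + o(1),
\end{align*}
the mutation contribution being $O(T\mu_n(S)\sum_x\pi^{(n)}(x)^2)=o(1)$ by (i)--(ii). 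The crux is then to prove the ucp convergence
\begin{align*}
2\gamma_n \sum_{x,y}\nu^{(n)}(x,y)\xi_{\gamma_n s}(x)\dxi_{\gamma_n s}(y) \;\longrightarrow\; Y^{\infty}_s(1-Y^{\infty}_s),
\end{align*}
which would supply the Wright--Fisher noise coefficient.

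This last convergence is the principal obstacle. The guiding intuition comes from the Feynman--Kac duality \eqref{MG} and the bound \eqref{mombdd}: conditional on the coalescing pair $(B^x,B^y)$ not having met by time $\gamma_n s$, the chains decorrelate to two independent $\pi^{(n)}$-distributed walks on a scale that, by (iv-1) or (iv-2), is negligible relative to $\gamma_n$. After decorrelation, the expectation of $\xi_0(B^x_{\gamma_n s})\dxi_0(B^y_{\gamma_n s})$ against the initial law factorises as $Y^{(n)}_0(1-Y^{(n)}_0)$, and more generally, since the density itself moves only on the slow scale, as $Y^{(n)}_s(1-Y^{(n)}_s)$. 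The normalisation \eqref{gammaforvm} is calibrated so that $2\gamma_n\sum_{x,y}\nu^{(n)}(x,y)\P(M_{x,y}>\gamma_n s)$ contributes unit factor in the limit. Handling this under the weaker spectral-gap condition (iv-2) rather than the direct mixing-time condition (iv-1) is the technical bottleneck and requires careful spectral decomposition of the two-point correlation. Once this identification of characteristics is in hand, every subsequential limit is a continuous $[0,1]$-valued semimartingale with Wright--Fisher characteristics, and classical well-posedness of \eqref{eq:WF} (e.g.\ via the Yamada--Watanabe theorem) completes the proof.
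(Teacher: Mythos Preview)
This theorem is not proved in the paper; it is a restatement of results from \cite{CCC,CC_16}, as the paper explicitly says just before the statement. So there is no in-paper proof to compare against, and your task was really to reconstruct the argument of those references.

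Your outline is in the right spirit---semimartingale decomposition, tightness, then identification of the quadratic variation---but there are two genuine gaps. First, you invoke Assumption~\ref{ass:stat}, which is \emph{not} among the hypotheses of Theorem~\ref{thm:WF}; only conditions (i)--(iv) are available, and (i) is strictly weaker than uniform comparability of $\pi^{(n)}$ to the uniform distribution. Relatedly, your crude bound $\E\langle M^{(n)},M^{(n)}\rangle_T\le C\gamma_n T\sum_x\pi^{(n)}(x)^2$ need not be uniformly bounded: the paper itself contemplates the regime $\sup_n\gamma_n/N_n=\infty$ (see Assumption~\ref{ass:vm}), and there is no ``standard reversibility estimate'' forcing $\gamma_n\nu_n(\1)$ to stay bounded under (i) alone. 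The actual tightness argument in \cite{CCC} uses the finer observation that $\langle M^{(n)},M^{(n)}\rangle_t=2\gamma_n\nu_n(\1)\int_0^t W_1(\xi_{\gamma_n s})\,ds+o(1)$ and then controls this integral via duality with coalescing chains (compare Proposition~\ref{prop:expbdd} in the present paper, which carries out the analogue under the stronger Assumption~\ref{ass:stat}).

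Second, your formulation of the ``crux'' as a ucp convergence of the \emph{integrand}
\[
2\gamma_n\sum_{x,y}\nu^{(n)}(x,y)\xi_{\gamma_n s}(x)\dxi_{\gamma_n s}(y)\longrightarrow Y_s(1-Y_s)
\]
is not what happens: the left-hand side fluctuates on the fast timescale and does not converge pointwise in $s$. What \cite{CCC} proves is convergence of the \emph{time integral}, via a separation-of-timescales argument in which the coalescing pair either meets quickly (contributing nothing) or mixes to stationarity before meeting, on a scale negligible relative to $\gamma_n$ by (iv-1) or (iv-2). Your heuristic paragraph captures this correctly, but the displayed convergence as written is false, and making the averaging rigorous---especially under the spectral-gap hypothesis (iv-2)---is precisely the substantial content of \cite{CCC,CC_16} that cannot be summarised as ``careful spectral decomposition.''
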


For Theorem~\ref{thm:WF}, (iv-1) and (iv-2) are its major conditions. Condition (iv-1) has the informal interpretation that on the time scale $\gamma_n$, any two independent $(E_n,q^{(n)})$-Markov chains starting at $x\neq y$ reach
stationarity very soon without meeting. A similar interpretation applies to (iv-2) if one
recalls that inverse spectral gaps are interpreted as relaxation times to stationarity \cite[Section~3.4]{AF_MC}. See \cite{Keilson_1979,Aldous_1982} for general results of such notions in the classical theory of Markov chains.
In addition, notice that, in Theorem~\ref{thm:WF}, condition (i) is implied by the fact that there is almost uniformity in stationarity (\ref{diag}). 
Condition (ii) of Theorem~\ref{thm:WF} follows from (\ref{eq:Y}) and (\ref{conv:WF}), which can be seen by solving elementary differential equations.

The next assumption concerns spatial structures defined by voting kernels.

\begin{ass}[Spatial homogeneity]\label{ass:homo}
For fixed $L\in \Bbb N$, we can choose a sequence of constants $\gamma_n$ growing to infinity such that the following $L$th-order {\bf spatial homogeneity condition} holds: for constants $R_0=1$, $R_1=0,R_2\cdots, R_L\in \R_+$, 
\begin{align}
\begin{split}
\lim_{n\to\infty}\gamma_n\nu_n(\1)\pi^{(n)}\big\{x\in E_n;q^{(n),\ell}(x,x)\neq R_\ell\big\}=0,\quad \forall\; 0\leq\ell \leq L,
\label{cond:hom}
\end{split}
\end{align} 
where $\nu_n$ is a measure on $E_n\times E_n$ defined by $\nu_n(x,y)=\pi^{(n)}(x)^2q^{(n)}(x,y)$ as in (\ref{def:nu}) and $\nu_n(\1)$ is the total mass of $\nu_n$. \qed 
\end{ass}

We have $R_1=0$ in Assumption~\ref{ass:homo} since voting kernels are assumed to have zero traces.

Assumption~\ref{ass:homo} corresponds to the local convergence of spatial structures in the sense of \cite{McKay_1981, Benjamini_2001}. For example,  if $\gamma_n=\Theta(N_n)$, that is
\begin{align}\label{gamman_nun}
 C_{\ref{gamman_nun}}^{-1}N_n\leq \gamma_n\leq C_{\ref{gamman_nun}}N_n 
\end{align}
for some constant $C_{\ref{gamman_nun}}\in (0,\infty)$ independent of $n$, and $\pi^{(n)}$'s are comparable to uniform distributions in the sense of (\ref{diag}),
then (\ref{cond:hom}) is equivalent to
\begin{align}\label{cond:hom1}
\lim_{n\to\infty}\pi^{(n)}\{x\in E_n;q^{(n),\ell}(x,x)\neq R_\ell\}=0,\quad \forall\;0\leq \ell\leq L.
\end{align}
See \cite[Section~8]{CCC} and Theorem~\ref{thm:O}
 for examples of (\ref{gamman_nun}) where $\gamma_n$ are given by (\ref{gammaforvm}).

The last assumption specifies the choice of selections strengths.

\begin{ass}[Weak selection]\label{ass:w}
We choose a sequence of selection strengths $w_n\in  [0,\overline{w}]$ satisfying:
\begin{align} 
& w_\infty=
\displaystyle \lim_{n\to\infty}\frac{w_n}{\nu_n(\1)}\in [0,\infty),
\label{selection2}
\end{align}
where $\overline{w}$ is defined by (\ref{def:w0}). \qed 
\end{ass}

Below is the main result of Section~\ref{sec:weak} for the vector semimartingales $Z^{(n)}$ defined in (\ref{def:Zn}). We equip spaces of Polish-space-valued c\`adl\`ag functions with Skorokhod's $J_1$-topology.

\begin{thm}[Main theorem]\label{thm:main2} 
Suppose that 
\begin{enumerate}
\item [\rm (i)] {\rm Assumption~\ref{ass:stat}} holds,
\item [\rm (ii)] {\rm Assumption~\ref{ass:vm}} holds, and
\item [\rm (iii)] a sequence of selection strengths $w_n$ satisfying {\rm Assumption~\ref{ass:w}} is given.
\end{enumerate}
Then we have the following results.

\begin{enumerate}
\item [\rm  1$^\circ$)] The sequence of laws of
  $Z^{(n)}=(Y^{(n)},M^{(n)},D^{(n)})$ under $\P^{(n)}_{\lambda_n}$ is $C$-tight. Any subsequential limit, say along $(Y^{(n_k)},M^{(n_k)},D^{(n_k)})$ under $\P^{(n_k)}_{\lambda_{n_k}}$, is the law of a continuous vector semimartingale $(Y,M,D)$ under $\P^{(\infty)}$ such that the last two components define a vector martingale with respect to the filtration generated by $(Y,M,D)$. In addition, the sequence of laws of $(Y^{(n_k)},M^{(n_k)})$ under $\P^{(n_k),w_{n_k}}_{\lambda_{n_k}}$ converges to the law of $(Y,M)$ under $D\cdot \P^{(\infty)}$. \\

\item [\rm 2$^\circ$)] If, moreover, 
$q^{(n)}$ are symmetric kernels, 
{\rm Assumption~\ref{ass:homo}} with $L=2$ with respect to the same sequence $\{\gamma_n\}$ chosen in {\rm (ii)} applies, 
and the payoff matrix $\Pi$ is given by (\ref{eq:payoff0}), then any subsequential limit $(Y,M,D)$ under $\P^{(\infty)}$ satisfies the covariation equations:
\begin{align}\label{YD:covar}
\langle Y,D\rangle_t=\langle M,D\rangle_t=w_\infty K_1(b,c)\int_0^t D_sY_1(1-Y_s)ds\quad\mbox{ under }\P^{(\infty)},
\end{align}
where $w_\infty$ is defined by (\ref{selection2}) and, with respect to $R_\ell$ chosen in (\ref{cond:hom}), $K_1(b,c)$ is defined by 
\begin{align}
K_1(b,c)=\frac{b(R_2+R_1)-c(R_1+R_0)}{2}.\label{def:K1}
\end{align}

\item [\rm 3$^\circ$)] If the assumptions of {\rm 2$^\circ$)} apply and the stronger {\rm Assumption~\ref{ass:homo}} with $L=3$ is valid as well, then the sequence of laws of  $(Y^{(n)},M^{(n)},D^{(n)})$ under $\P^{(n)}_{\lambda_n}$ converges weakly towards the law of a vector semimartingale $(Y,M,D)$ under $\P^{(\infty)}$. The triplet $(Y,M,D)$ under $\P^{(\infty)}$
can be characterized as a solution to the following system of stochastic differential equations:
\begin{align}\label{SDE}
\left\{
\begin{array}{ll}
dY_t=[\mu(1)(1-Y_t) -\mu(0)Y_t]dt+\sqrt{Y_t(1-Y_t)}dW^{1}_t,\\
\vspace{-.2cm}\\
dM_t=\sqrt{Y_t(1-Y_t)}dW^{1}_t,\\
\vspace{-.2cm}\\
dD_t= w_\infty D_t\sqrt{Y_t(1-Y_t)}\big[K_1(b,c)dW^{1}_t+\sqrt{K_2(b,c)- K_1(b,c)^2}dW^{2}_t\big].
\end{array}
\right.
\end{align}
Here, $(W^{1},W^{2})$ is a two-dimensional standard Brownian motion, $K_1(b,c)$ is given by (\ref{def:K1}), and $K_2(b,c)$ is defined by
\begin{align}
K_2(b,c)=&\frac{b^2(R_3+R_2)-2bc(R_2+R_1)+c^2(R_1+R_0)}{2}.\label{def:K2}
\end{align}
\end{enumerate}
\end{thm}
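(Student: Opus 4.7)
The plan is to treat the three parts in order, combining the a priori bounds of Section~\ref{sec:radon} with Girsanov's theorem for 1$^\circ$), and an explicit spatial-averaging argument based on the homogeneity assumption for 2$^\circ$) and 3$^\circ$). For 1$^\circ$), $C$-tightness of $Y^{(n)}$ is granted by Assumption~\ref{ass:vm} together with the continuity of the Wright--Fisher limit. Since $M^{(n)}$ is the martingale part of $Y^{(n)}$ in~(\ref{eq:Y}) and the drift is uniformly Lipschitz, $C$-tightness of $M^{(n)}$ follows automatically; jumps of both are of size $O(\pi^{(n)}_{\max})=O(1/N_n)$ by~(\ref{eq:M}) and Assumption~\ref{ass:stat}. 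For $D^{(n)}$, I would invoke the $L^a$-bound~(\ref{cond:ui}) and the quadratic-variation bound~(\ref{tight_bdd1}), after verifying that the exponential moment involving $w_n^2(\pi^{(n)}_{\min})^{-1}\int_0^{\gamma_n t}W_\ell(\xi_s)ds$ remains finite on compact time intervals, which follows from the scaling implied by Assumptions~\ref{ass:stat}--\ref{ass:w} together with the voter-model convergence controlling $\int W_\ell\,ds$. Aldous' criterion then gives tightness, and jumps of $D^{(n)}$ of size $O(w_n)D^{(n)}_{s-}$ vanish, so the limit is continuous. The martingale property of $(M,D)$ under the limit comes from uniform integrability provided by these same moment bounds. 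The change-of-measure assertion follows from the identity $\E^{(n),w_n}_{\lambda_n}[F(Y^{(n)},M^{(n)})]=\E^{(n)}_{\lambda_n}[D^{(n)}_TF(Y^{(n)},M^{(n)})]$ for bounded continuous $F$ of the paths, passed to the limit using the joint weak convergence.

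For 2$^\circ$), I would time-change~(\ref{Dp}) to obtain
\[
\langle M^{(n)}, D^{(n)}\rangle_t = w_n\gamma_n\int_0^t D^{(n)}_s\,\overline{D}(\xi_{\gamma_n s})\,ds + w_n^2\gamma_n\int_0^t D^{(n)}_s\,R_1^{w_n}(\xi_{\gamma_n s})\,ds,
\]
in which the second term is $o(1)$ by~(\ref{tight_bdd2}) and the scaling $w_n^2\gamma_n=O(\nu_n(\1))\to 0$. For the leading term, the prisoner's-dilemma payoff~(\ref{eq:payoff0}) yields, through~(\ref{def:A})--(\ref{def:B}), the explicit expansion $A(x,\xi)-B(y,\xi)=b[(q\xi)(y)-(q^2\xi)(x)]+c[(q\xi)(x)-\xi(y)]$, so that $\overline{D}(\xi)$ becomes a weighted sum of three-site correlations with kernels $\pi(x)q(x,y)q^\ell(\cdot,\cdot)$ for $\ell\in\{0,1,2\}$. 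Reversibility and symmetry of $q^{(n)}$ allow each such sum to be rewritten so as to exhibit a diagonal factor $q^{(n),\ell}(x,x)$, which the spatial homogeneity~(\ref{cond:hom}) with $L=2$ replaces by $R_\ell$ modulo an error that vanishes in $L^1(\pi^{(n)})$; the surviving combination telescopes to $K_1(b,c)$ times the unlike-pair density $\sum_{x,y}\pi(x)q(x,y)[\xi(x)\dxi(y)+\dxi(x)\xi(y)]$. An ergodic consequence of Assumption~\ref{ass:vm}---that the time integral of this unlike-pair density at speed $\gamma_n$ converges in probability to $\int_0^t Y_s(1-Y_s)\,ds$---then delivers~(\ref{YD:covar}).

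For 3$^\circ$), the same machinery applied to~(\ref{DD}) gives
\[
\langle D^{(n)}, D^{(n)}\rangle_t = w_n^2\gamma_n\int_0^t (D^{(n)}_s)^2\sum_{x,y}q^{(n)}(x,y)[A-B]^2(\xi_{\gamma_n s})\,ds + o(1),
\]
where $[A-B]^2$ is a four-site correlation, hence the requirement $L=3$ in Assumption~\ref{ass:homo}; the analogous reduction yields $\langle D,D\rangle_t=w_\infty^2 K_2(b,c)\int_0^t D_s^2 Y_s(1-Y_s)\,ds$. With $\langle Y,Y\rangle$, $\langle M,M\rangle$, $\langle M,D\rangle$, and $\langle D,D\rangle$ all identified, the triple $(Y,M,D)$ is characterized as the unique solution of the martingale problem associated with~(\ref{SDE}) (the nonnegativity $K_2-K_1^2\geq 0$ emerges from the Cauchy--Schwarz structure of the covariations), and uniqueness of that system---$Y$ a Wright--Fisher diffusion, $M$ determined by $Y$ up to drift, and $D$ a linear exponential SDE---promotes subsequential convergence to full weak convergence. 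The main obstacle is the combinatorial bookkeeping in Parts~2 and~3: after the scalings by $w_n\gamma_n$ and $w_n^2\gamma_n$, one must show that each multi-site sum appearing in $\overline{D}$ and in $[A-B]^2$ collapses to a constant multiple of the unlike-pair density with the exact constants $K_1(b,c)$ and $K_2(b,c)$ prescribed by~(\ref{def:K1})--(\ref{def:K2}), while simultaneously controlling the $O(w^2)$ and $O(w^3)$ remainders via Lemma~\ref{lem:qvbdd}.
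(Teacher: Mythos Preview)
Your treatment of 1$^\circ$) is essentially the paper's argument: $C$-tightness of $Y^{(n)}$ and $M^{(n)}$ from Assumption~\ref{ass:vm}, $C$-tightness of $D^{(n)}$ from the $L^a$ bound~(\ref{cond:ui}) and the quadratic-variation bound~(\ref{tight_bdd1}) once the exponential moments of $\gamma_n\nu_n(\1)\int_0^t W_\ell(\xi_{\gamma_ns})ds$ are controlled, and passage to the tilted limit via uniform integrability of $D^{(n)}$. The paper makes the exponential-moment control precise (Proposition~\ref{prop:expbdd}) through the coalescing-chain duality~(\ref{mombdd}) and a Markov-iteration argument; you should be aware that this step is not a direct corollary of voter-model convergence alone, since $\gamma_n\nu_n(\1)$ can diverge.

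For 2$^\circ$) and 3$^\circ$), your endpoints are right---Lemma~\ref{lem:payoff} expresses $\overline{D}$ and $\sum\pi q[A-B]^2$ as linear combinations of the $W_\ell$, and the time integral of $W_1$ converges to $\tfrac12\int Y_s(1-Y_s)ds$ via $\langle M^{(n)},M^{(n)}\rangle$---but your middle step is not correct as stated. You write that reversibility and symmetry let one ``exhibit a diagonal factor $q^{(n),\ell}(x,x)$'' in the static expressions for $\overline{D}(\xi)$, which homogeneity then replaces by $R_\ell$. There is no such pointwise reduction: $W_2(\xi),W_3(\xi),W_4(\xi)$ are genuinely different functions of the configuration and cannot be algebraically collapsed to multiples of $W_1(\xi)$ (the diagonal $y=x$ contributes nothing since $\xi(x)\dxi(x)=0$). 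The reduction holds only at the level of \emph{time integrals} and is a dynamical fact about the voter model. The paper's mechanism (Lemma~\ref{lem:momclos} and Proposition~\ref{prop:momclos}) builds martingales $M^{(n),\ell}$ from the Feynman--Kac duality (Lemma~\ref{lem:Hmart}), integrates their defining equation against $e^{-2(1+\mu_n(\1))s}$, and after controlling boundary terms obtains the recursion
\[
\gamma_n\nu_n(\1)\int_0^t\big[W_{\ell+1}(\xi_{\gamma_ns})-W_\ell(\xi_{\gamma_ns})-R_\ell W_1(\xi_{\gamma_ns})\big]ds\xrightarrow[n\to\infty]{L^2}0.
\]
Here Assumption~\ref{ass:homo} enters only to identify the coefficient of the correction term $I(\xi)$ in~(\ref{def:I}), which is where the diagonal $q^{(n),\ell}(x,x)$ actually appears; the bulk of the collapse $W_{\ell+1}\approx W_\ell+R_\ell W_1$ is driven by the coalescing dynamics, not by a configuration-wise identity. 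Iterating this recursion produces $\int W_{\ell+1}\approx \tfrac{R_\ell+\cdots+R_0}{2}\int Y(1-Y)$, and substituting into~(\ref{eq:Dbar}) and~(\ref{eq:Dbar2}) gives exactly $K_1(b,c)$ and $K_2(b,c)$. Without this dynamical moment-closure step your argument does not close.
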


The proof of Theorem~\ref{thm:main2} is given in Section~\ref{sec:tight} and Section~\ref{sec:id}. See Proposition~\ref{prop:tight} for Theorem~\ref{thm:main2} 1$^\circ$) and Proposition~\ref{prop:id} for Theorem~\ref{thm:main2} 2$^\circ$) and 3$^\circ$). These propositions give more detailed results.

The following theorem is a straightforward application of Theorem~\ref{thm:main2} 1$^\circ$) and 2$^\circ$), Girsanov's theorem \cite[Theorem VIII.1.7]{Revuz_2005}, and the Yamada-Watanabe theorem for pathwise uniqueness in stochastic differential equations~\cite[Theorem IX.3.5]{Revuz_2005}.

\begin{thm}[Diffusions for evolutionary games with death-birth updating]\label{thm:main2-1}
Let the assumptions of {\rm Theorem~\ref{thm:main2} {\rm 2$^\circ$)}} be in force, and recall the constants $w_\infty$ and $K_1(b,c)$ defined by (\ref{selection2}) and (\ref{def:K1}), respectively. Then we have the following. 
\begin{enumerate}
\item [\rm 1$^\circ$)] The sequence of laws of $(Y^{(n)},\P^{(n),w_n}_{\lambda_n})$ converges weakly to the law of a Wright-Fisher diffusion $Y$ with initial law  $\ms L (Y_0)=\widetilde{\lambda}_\infty$ under $\P_{\widetilde{\lambda}_\infty}^{(\infty),w_\infty}$, where $\P_{\widetilde{\lambda}_\infty}^{(\infty),w_\infty}$ can be defined as $D\cdot \P^{(\infty)}$ for any subsequential weak limit $D$ of $D^{(n)}$. \smallskip

\item [\rm 2$^\circ$)] The Wright-Fisher diffusion  $Y$ in 
1$^\circ$) obeys the following equation: 
\begin{align}\label{SDE:WF}
dY_t=[w_\infty K_1(b,c)Y_t(1-Y_t)+\mu(1)(1-Y_t) -\mu(0) Y_t]dt+\sqrt{Y_t(1-Y_t)}dW_t
\end{align}
with respect to a standard Brownian motion $W$.
\end{enumerate}
\end{thm}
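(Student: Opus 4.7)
The plan is to combine the subsequential characterization from Theorem~\ref{thm:main2}~1$^\circ$) and 2$^\circ$) with a Girsanov change of measure, and then promote subsequential convergence to full convergence via pathwise uniqueness.

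First, I would apply Theorem~\ref{thm:main2}~1$^\circ$) to obtain $C$-tightness of the triple $Z^{(n)} = (Y^{(n)}, M^{(n)}, D^{(n)})$ under $\P^{(n)}_{\lambda_n}$, extracting a subsequence along which $Z^{(n_k)}$ converges weakly to a continuous semimartingale $(Y, M, D)$ under $\P^{(\infty)}$ such that $(M, D)$ is a true vector martingale in its natural filtration. The same theorem further identifies the weak limit of $(Y^{(n_k)}, M^{(n_k)})$ under the tilted measures $\P^{(n_k), w_{n_k}}_{\lambda_{n_k}}$ as the law of $(Y, M)$ under $D \cdot \P^{(\infty)}$; this is precisely what legitimates the identification $\P_{\widetilde\lambda_\infty}^{(\infty), w_\infty} = D \cdot \P^{(\infty)}$ claimed in 1$^\circ$).

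Next, I would read off the dynamics of $Y$ under $D \cdot \P^{(\infty)}$ via Girsanov's theorem \cite[Theorem~VIII.1.7]{Revuz_2005}. Under $\P^{(\infty)}$, Assumption~\ref{ass:vm} combined with (\ref{eq:Y}) guarantees that $Y$ solves the driftless Wright--Fisher equation (\ref{eq:WF}) with martingale part equal to $M$, so $\langle M, M\rangle_t = \int_0^t Y_s(1-Y_s)\,ds$. Theorem~\ref{thm:main2}~2$^\circ$) supplies the key covariation identity
\[
d\langle M, D\rangle_t = w_\infty K_1(b,c)\, D_t Y_t(1-Y_t)\,dt.
\]
Hence
\[
\widetilde M_t := M_t - \int_0^t \frac{d\langle M,D\rangle_s}{D_s} = M_t - w_\infty K_1(b,c)\int_0^t Y_s(1-Y_s)\,ds
\]
is a continuous $(D\cdot\P^{(\infty)})$-martingale with quadratic variation $\int_0^t Y_s(1-Y_s)\,ds$ (quadratic variation being invariant under Girsanov). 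A standard Dambis--Dubins--Schwarz representation then writes $\widetilde M_t = \int_0^t \sqrt{Y_s(1-Y_s)}\,dW_s$ for some standard Brownian motion $W$, and substituting into the semimartingale decomposition of $Y$ produces exactly (\ref{SDE:WF}).

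Finally, to upgrade subsequential convergence to weak convergence of the full sequence, I would invoke uniqueness in law for (\ref{SDE:WF}). The drift is Lipschitz on $[0,1]$ and the diffusion coefficient $y \mapsto \sqrt{y(1-y)}$ is $\tfrac12$-H\"older continuous, so the Yamada--Watanabe criterion \cite[Theorem~IX.3.5]{Revuz_2005} yields pathwise uniqueness, hence uniqueness in law. Since every subsequential limit coincides in law with this unique Wright--Fisher diffusion, the whole sequence must converge. The only step requiring genuine care is ensuring that the limiting $D$ is a true martingale (not merely a local one), so that $D\cdot\P^{(\infty)}$ is a bona fide probability measure and the Girsanov correction $\int D_s^{-1}d\langle M,D\rangle_s$ is well defined --- but this is already delivered by Theorem~\ref{thm:main2}~1$^\circ$), which itself rests on the uniform integrability supplied by the moment bounds of Proposition~\ref{prop:Dbdd}. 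Beyond that, the proof is essentially a bookkeeping exercise combining the three invoked results.
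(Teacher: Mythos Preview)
Your proposal is correct and matches the paper's own approach: the paper does not spell out a proof but states just before the theorem that it ``is a straightforward application of Theorem~\ref{thm:main2} 1$^\circ$) and 2$^\circ$), Girsanov's theorem \cite[Theorem VIII.1.7]{Revuz_2005}, and the Yamada-Watanabe theorem for pathwise uniqueness in stochastic differential equations~\cite[Theorem IX.3.5]{Revuz_2005}.'' You have filled in exactly these steps; the only minor quibble is terminological---the representation $\widetilde M_t = \int_0^t\sqrt{Y_s(1-Y_s)}\,dW_s$ is not literally Dambis--Dubins--Schwarz but rather the martingale representation of \cite[Theorem~V.3.9]{Revuz_2005} (possibly after enlarging the space, cf.\ \cite[Theorem~VII.2.7]{Revuz_2005} as used in Proposition~\ref{prop:id}).
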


\subsection{Example: evolutionary games on large random regular graphs}\label{sec:app}
We fix $k\geq 3$ and consider a sequence of random $k$-regular graphs $G_n$ on $N_n$ vertices with $N_n\nearrow\infty$. (For definiteness, we assume that $G_n$'s are given by the uniform models.) One basic property of $\{G_n\}$ states that the second eigenvalues of the adjacency matrices of $G_n$ are bounded away from the largest ones, namely $k$, in the limit of infinite volume (see \cite{Friedman_2008, Bordenave_2015}). Hence, $G_n$'s are connected for all large $n$.

The following proposition can be used to verify Assumption~\ref{ass:homo} with $L= 3$, which is one of the conditions for Theorem~\ref{thm:main2} $3^\circ)$.

\begin{prop}\label{prop:gap}
Let $\mathbf g(G)$ denote the spectral gap of a random walk on a finite connected unweighted graph $G$. Recall that $M_{x,y}$ denotes the first meeting time of two independent rate-$1$ random walks on $G$ starting from $x$ and $ y$. 
Then
\begin{align}\label{mbdd}
\max_{x,y\in G}\E[M_{x,y}]\leq \max_{y\in G}\frac{2\sum_{x:x\neq y}\deg (x)}{\mathbf g(G)\deg (y)}.
\end{align}
\end{prop}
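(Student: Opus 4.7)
The plan is to bound the meeting time by a commute time, and then to bound the commute time using the spectral gap. Let $-\mathcal L$ denote the generator of the rate-$1$ random walk on $G$, with stationary distribution $\pi(x) = \deg(x)/(2m)$ (where $m = |E(G)|$), and let $\{\phi_k\}_{k\geq 0}$ be an $L^2(\pi)$-orthonormal basis of eigenfunctions of $-\mathcal L$ with eigenvalues $0 = \lambda_0 < \lambda_1 = \mathbf g(G) \leq \lambda_2 \leq \cdots$. Write $\kappa(x,y) = \E_x[T_y] + \E_y[T_x]$ for the commute time between $x$ and $y$.

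In the first step I would derive a spectral upper bound on $\kappa$. Expanding the hitting-time function $h(x) = \E_x[T_y]$, which solves the Poisson equation $-\mathcal L h = 1 - \pi(y)^{-1}\1_{\{y\}}$ subject to $h(y) = 0$, in the basis $\{\phi_k\}$ and then symmetrizing in $(x,y)$ produces the classical spectral representation
\begin{equation*}
\kappa(x, y) = \sum_{k \geq 1} \frac{(\phi_k(x) - \phi_k(y))^2}{\lambda_k}.
\end{equation*}
Using $\lambda_k^{-1} \leq \mathbf g(G)^{-1}$ for $k \geq 1$ together with the reproducing identity $\sum_{k \geq 0}\phi_k(x)\phi_k(z) = \delta_{xz}/\pi(z)$, I obtain for $x \neq y$ the bound $\kappa(x,y) \leq \mathbf g(G)^{-1}(\pi(x)^{-1} + \pi(y)^{-1} - 2)$. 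Substituting $\pi(z)^{-1} - 1 = (2m - \deg z)/\deg z = \sum_{x'\neq z}\deg(x')/\deg(z)$ and maximizing over $z$ yields exactly the right-hand side of the proposition.

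In the second step I would establish the probabilistic comparison $\E[M_{x,y}] \leq \kappa(x,y)$ for two independent continuous-time rate-$1$ walks. Writing $M_{x,y}$ as the hitting time of the diagonal $\Delta = \{(z,z):z\in G\}$ under the joint walk $(X_t, Y_t)$ on $G\times G$, which is reversible with stationary measure $\pi\otimes\pi$ and spectral gap $\mathbf g(G)$, reduces the task to a Dirichlet problem on $(G\times G) \setminus \Delta$. The cleanest route is to invoke a classical comparison inequality of Aldous for meeting times and commute times on reversible chains, or alternatively to argue via a coupling that exploits reversibility/time-reversal to swap the two walks at appropriate stopping times. Chaining this with the spectral bound from the first step finishes the proof.

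The main obstacle is the second step. A naive attempt to use $\kappa/2$ as a pointwise supersolution for the meeting-time Dirichlet problem fails: a direct calculation gives $\mathcal L^{(2)}\kappa(x,y) = \sum_z q(x,z)\kappa(x,z) + \sum_z q(y,z)\kappa(y,z) - 2/\pi(x) - 2/\pi(y)$ off $\Delta$, and the inequality $\mathcal L^{(2)}\kappa \leq -2$ does not always hold pointwise (for instance, it fails at the central pair $(2,3)$ of the path $P_4$). Consequently, the comparison $\E[M_{x,y}] \leq \kappa(x,y)$ cannot be obtained from a purely Dynkin-type supersolution check and must instead exploit the global structure of the reversible joint walk — either through a time-reversal coupling, or through an averaging/duality argument leveraging the $L^2$-contractivity on functions vanishing on $\Delta$.
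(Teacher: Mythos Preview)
Your second step is indeed the gap, and you have correctly diagnosed that the supersolution check fails. But the fix is much simpler than the coupling or duality arguments you gesture at: you should compare to the \emph{hitting time} rather than the commute time. The paper invokes \cite[Proposition~14.5]{AF_MC}, which gives
\[
\max_{x,y}\E[M_{x,y}]\leq \max_{x,y}\E[H_{x,y}],
\]
where $H_{x,y}$ is the first hitting time of $y$ by a single rate-$1$ walk started at $x$. The argument behind this is short: run both walks until the first jump of $Y$; conditional on that jump being to $z$, by the Markov property the remaining meeting time is stochastically bounded by the time for $X$ alone to hit $z$, uniformly over the current position of $X$. This sidesteps the product-chain Dirichlet problem entirely.

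Once you bound by hitting times, the spectral estimate is again an Aldous--Fill citation: \cite[Lemma~3.15]{AF_MC} gives $\max_{x,y}\E[H_{x,y}]\leq 2\max_y\sum_x\pi(x)\E[H_{x,y}]$, and \cite[Lemma~3.17]{AF_MC} gives $\sum_x\pi(x)\E[H_{x,y}]\leq (1-\pi(y))/(\mathbf g(G)\pi(y))$, which together yield the stated bound exactly. Your commute-time route, even if completed, has an arithmetic slip: from the reproducing identity one gets $\sum_{k\geq 1}(\phi_k(x)-\phi_k(y))^2=\pi(x)^{-1}+\pi(y)^{-1}$, not $\pi(x)^{-1}+\pi(y)^{-1}-2$ (the cross term $-2\sum_{k\geq 1}\phi_k(x)\phi_k(y)=+2$ cancels the two $-1$'s). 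So your route would deliver $2\sum_{x}\deg(x)/(\mathbf g(G)\deg(y))$ rather than the sum over $x\neq y$, a slightly weaker inequality than the one claimed.
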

\begin{proof}
Let $H_{x,y}$ denote the first hitting time of $y$ by a rate-$1$ random walk on $G$ starting from $x$.
By \cite[Proposition~14.5, Lemma 3.15, Lemma~3.17]{AF_MC}, we have
\begin{align*}
\max_{x,y\in G}\E[M_{x,y}]\leq \max_{x,y\in G}\E[H_{x,y}]\leq \max_{y\in G}2\sum_{x\in G}\pi(x)\E[H_{x,y}]\leq \max_{y\in G}\frac{2\big(1-\pi(y)\big)}{\mathbf g(G)\pi(y)},
\end{align*}
which is enough for the required inequality in  (\ref{mbdd}) since $\pi(y)\equiv \deg(y)/\sum_x\deg(x)$.
\end{proof}

The following theorem obtains diffusion approximations of the game density processes on large random regular graphs when payoff matrices are given by (\ref{eq:payoff0}).

\begin{thm}
\label{thm:O}
For fixed $k\geq 3$, consider a sequence of random $k$-regular graphs $G_n$ with $G_n$ carrying $N_n$ vertices and $N_n\nearrow\infty$. 
Set $\gamma_n=N_n^{-2}\sum_{x,y\in E_n}\E^{(n)}[M_{x,y}]$ and then choose $\{w_n\}$ according to  Assumption~\ref{ass:w} and mutation measures $\mu_n$ on $S$ which satisfy {\rm Theorem~\ref{thm:WF}} {\rm (ii)}.
Finally, assume that {\rm Theorem~\ref{thm:WF}} {\rm (iii)} holds for some $\lambda_n\in \ms P(S^{E_n})$. 
Then the conclusion of {\rm Theorem~\ref{thm:main2}} {\rm 3$^\circ$)} holds, and the constants $K_1(b,c)$ and $K_2(b,c)$ are now given by
\begin{align}K_1(b,c)=\frac{bk^{-1}-c}{2}\quad \mbox{and}\quad  K_2(b,c)=\frac{b^2k^{-1}-2bck^{-1}+c^2}{2}.\label{specialK}
\end{align}
In particular, the limiting Wright-Fisher diffusion $Y$ under $\P^{(\infty),w_\infty}_{\widetilde{\lambda}_\infty}$ in Theorem~\ref{thm:main2-1} simplifies to the following stochastic differential equation:
\begin{align}\label{SDE:O}
dY_t=\left(\frac{w_\infty (b-ck)}{2k}  Y_t(1-Y_t)+\mu(1)(1-Y_t)-\mu(0)Y_t\right) dt+\sqrt{Y_t(1-Y_t)}dB_t,
\end{align}
where $B$ is a standard Brownian motion. 
\end{thm}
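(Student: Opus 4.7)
The plan is to reduce the theorem to verifying the hypotheses of Theorem~\ref{thm:main2} 3$^\circ$) for the sequence $\{(G_n, q^{(n)}, \mu_n, w_n)\}$, then to compute $K_1(b,c)$ and $K_2(b,c)$ by direct substitution of the resulting constants $R_\ell$.

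First I would check Assumption~\ref{ass:stat} and Assumption~\ref{ass:vm}. Since each $G_n$ is $k$-regular, the voting kernel $q^{(n)}$ is symmetric with uniform stationary distribution $\pi^{(n)}(x)\equiv 1/N_n$, so Assumption~\ref{ass:stat} is immediate and condition (i) of Theorem~\ref{thm:WF} reduces to $1/N_n\to 0$. Conditions (ii), (iii) of Theorem~\ref{thm:WF} are imposed by the statement. For the spectral input, Friedman's theorem (cited at the start of Section~\ref{sec:app}) yields $\liminf_n \mathbf g_n > 0$ with probability tending to one, so that Proposition~\ref{prop:gap} together with $\deg(x)\equiv k$ gives $\gamma_n \leq 2N_n/\mathbf g_n = O(N_n)$. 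A complementary lower bound $\gamma_n \geq cN_n$ holds because two independent continuous-time walks on a bounded-degree graph need $\Omega(N_n)$ time on average to meet. Hence $\gamma_n = \Theta(N_n)$, which makes $\gamma_n \pi^{(n)}_{\max} = O(1)$ and $\mathbf g_n \gamma_n \to \infty$. Condition (iv-2) of Theorem~\ref{thm:WF} is therefore satisfied, so Assumption~\ref{ass:vm} holds.

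Next I would verify Assumption~\ref{ass:homo} with $L = 3$. Uniformity of $\pi^{(n)}$ gives $\nu_n(\1) = 1/N_n$, hence $\gamma_n \nu_n(\1) = O(1)$, and it suffices to show $\pi^{(n)}\{q^{(n),\ell}(x,x) \neq R_\ell\} \to 0$ for each $\ell \leq 3$. A direct calculation on a $k$-regular graph yields $q^{(n),0}(x,x) \equiv 1$, $q^{(n),1}(x,x) \equiv 0$ (no self-loops), and $q^{(n),2}(x,x) \equiv k\cdot k^{-2} = 1/k$ for \emph{every} vertex, giving $R_0 = 1$, $R_1 = 0$, $R_2 = 1/k$ with empty exceptional set. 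For $\ell = 3$, one has $q^{(n),3}(x,x) = 2k^{-3}T_x$ where $T_x$ counts triangles through $x$; on the uniform random $k$-regular graph the total triangle count is asymptotically Poisson with bounded mean (a classical result of Bollob\'as and Wormald), so $\E[\#\{x:T_x>0\}] = O(1)$ and Markov's inequality gives $\pi^{(n)}\{T_x > 0\} = O(1/N_n) \to 0$. Thus $R_3 = 0$ works.

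All hypotheses of Theorem~\ref{thm:main2} 3$^\circ$) are now verified, so both the weak convergence of $(Y^{(n)}, M^{(n)}, D^{(n)})$ under $\P^{(n)}_{\lambda_n}$ and the statement of Theorem~\ref{thm:main2-1} follow. Substituting $(R_0, R_1, R_2, R_3) = (1, 0, 1/k, 0)$ into (\ref{def:K1}) and (\ref{def:K2}) produces (\ref{specialK}) at once, and the identity $w_\infty K_1(b,c) = w_\infty(b-ck)/(2k)$ turns (\ref{SDE:WF}) into (\ref{SDE:O}). The only non-routine inputs are the spectral gap lower bound and the vanishing triangle density on random $k$-regular graphs; both are classical and already invoked in the prelude to the theorem, so essentially all the analytic work is carried by Theorem~\ref{thm:main2} itself. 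The main place a careful proof would need to be explicit is the handling of the randomness of $G_n$: all the graph-dependent inputs above should be arranged to hold on a common event whose probability tends to one (or, via subsequence extraction, almost surely), after which the argument is deterministic in $G_n$.
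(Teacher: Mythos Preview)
Your proposal is correct and follows essentially the same route as the paper: verify Assumptions~\ref{ass:stat}, \ref{ass:vm}, \ref{ass:homo} (with $L=3$), invoke Theorem~\ref{thm:main2} 3$^\circ$), and then substitute the values of $R_\ell$ into (\ref{def:K1}) and (\ref{def:K2}). The paper obtains the lower bound $\gamma_n=\Omega(N_n)$ by quoting the explicit inequality (\ref{gamman:lbd}) from \cite{CCC} rather than your heuristic about meeting times on bounded-degree graphs, and it handles all of $R_1,R_2,R_3$ at once by citing the locally tree-like property of random regular graphs \cite{McKay_1981}, whereas you compute $R_2=1/k$ exactly and treat $R_3$ via the Poisson triangle count; these are cosmetic differences and your computations are fine (indeed $q^{(n),2}(x,x)\equiv 1/k$ on any $k$-regular graph, so the exceptional set is empty). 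Your closing remark about conditioning on a high-probability event to make the graph-dependent inputs deterministic is a point the paper leaves implicit.
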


\begin{rmk}\label{rmk:O}
(1) To convert the diffusion process defined by the coefficients in 
(\ref{alphabeta}) to the diffusion process defined by \cite[Eq. (18) in SI]{Ohtsuki_2006}, the reader may notice that, on a $k$-regular graph with $N$ vertices, the generator of the evolutionary game considered in \cite[SI]{Ohtsuki_2006} is given by $N^{-1}\mathsf L^{w,0}$, where $\mathsf L^{w,0}$ is defined by (\ref{def:Lw});
compare \cite[Eq. (11) and (12) in SI]{Ohtsuki_2006} to (\ref{def:Lw}), (\ref{def:cw}) and (\ref{eq:qwexp0}). Hence, speeding up its time scale by the constant factor $N$ recovers the evolutionary game considered in this paper. Also, we have an additional multiplicative factor of $k^{-1}$ in the drift coefficient in (\ref{coefficients}) 
 since total payoffs of individuals are defined by the weighted averages in (\ref{weightedpay}), where $q(x,y)$ are equal to $k^{-1}$ for all pairs of vertices $x,y$ adjacent to each other.
\smallskip

\noindent (2) Assume that $\mu_n=0$ for all $n$. Given a payoff matrix $\Pi$ taking the form (\ref{eq:payoff0}), the constants $\alpha,\beta$ defined by (\ref{alphabeta}) simplify to $\alpha=0$ and $\beta=k(b-kc)$. If we speed up time by applying the constant time change $[N(k-1)]/[2(k-2)]$ to (\ref{coefficients}), the diffusion process predicted in \cite[SI]{Ohtsuki_2006} has a Wright-Fisher noise coefficient as in (\ref{SDE:O}). In addition, by setting selection strength $w$  in (\ref{coefficients}) to be $w_\infty/N$, we recover the drift term in (\ref{SDE:O}).  
\qed 
\end{rmk}

\begin{que}\label{que:O}
Is the prediction in \cite[SI]{Ohtsuki_2006} precise to the degree that 
\[
\gamma_n= N_n^{-2}\sum_{x,y\in E_n}\E^{(n)}[M_{x,y}]\sim \frac{N_n(k-1)}{2(k-2)}, \quad\mbox{as }n\to\infty?
\]
\qed 
\end{que}

\paragraph{\bf Proof of Theorem~\ref{thm:O}}
Note that $\gamma_n=\Theta(N_n)$ since, for example, \cite[(3.21)]{CCC} shows
\begin{align}\label{gamman:lbd}
\gamma_n\geq N_n\left(\frac{N_n-1}{2N_n}\right)^2
\end{align}
and Proposition~\ref{prop:gap} applies by the fact that the spectral gaps $\mathbf g(G_n)$ are bounded away from zero. We have used the aforementioned property of random regular graphs proven in \cite{Friedman_2008, Bordenave_2015}.

To see that the conclusion of Theorem~\ref{thm:main2} 3$^\circ$) holds, it is enough to verify Assumption~\ref{ass:vm} and Assumption~\ref{ass:homo} with $L=3$ since $q^{(n)}(x,y)\equiv 1/k$ for $x\sim y$, $\pi^{(n)}(x)\equiv N_n^{-1}$, and we have chosen $\{w_n\}$ according to Assumption~\ref{ass:w}. For Assumption~\ref{ass:vm}, 
Theorem~\ref{thm:WF} holds with the present choice of $\gamma_n$. Indeed, 
(i) of Theorem~\ref{thm:WF} obviously holds and its (ii)--(iii) are valid by the choice of $\gamma_n$, $\mu_n$, and $\lambda_n\in \ms P(S^{E_n})$. We also know that 
$\mathbf g(G_n)$ are bounded away from zero, so that condition (iv-2) of Theorem~\ref{thm:WF} holds. To satisfy Assumption~\ref{ass:homo} with $L=3$, notice that  $\nu_n(\1)=1/N_n$ and we have seen that $\gamma_n=\Theta(N_n)$. Then it is enough to check (\ref{cond:hom1}). But this condition follows from the well-known locally tree-like property of random regular graphs (cf. \cite{McKay_1981}), which yields the following exact values of $R_1,R_2,R_3$ in particular:
\begin{align}\label{R:prob}
R_1=R_3=0\quad\mbox{ and }\quad R_2=k^{-1} .
\end{align} 
Hence, the conclusion of Theorem~\ref{thm:main2} 3$^\circ$) holds. The constants $K_1(b,c)$ and $K_2(b,c)$ now take the forms in (\ref{specialK}), and the diffusion process in (\ref{SDE:WF}) simplifies to the one in (\ref{SDE:O}). 
\qed \\

In Section~\ref{sec:wass}, we will continue this discussion in the context where mutations are absent and prove diffusion approximations of the game absorbing probabilities. See Corollary~\ref{cor:O} for the precise statement.

\subsection{Proof of the main theorem: tightness}\label{sec:tight}
In this section, we prove tightness of the sequence of laws of the vector semimartingales $Z^{(n)}$ defined in (\ref{def:Zn}) and related tightness properties. Before that, we handle predictable covariations between $M^{(n)}$ and $D^{(n)}$ and their own predictable quadratic variations
by Lemma~\ref{lem:qv} and Lemma~\ref{lem:qvbdd}.

To simplify notation, we introduce  two discrete-time $(E,q)$-Markov chains $(X_\ell)$ and $(Y_\ell)$ which satisfy the following three properties: (1) $X_0=Y_0$; (2) they are
independent of the system of $(E,q)$-coalescing chains $\{B^x;x\in E\}$ (defined at the beginning of Section~\ref{sec:weak}); (3)
they are independent if conditioned on $X_0$. Notice that we can write the two-point density functions $W_\ell$ defined by (\ref{def:Pell}) as $W_\ell(\xi)=\E_\pi[\xi(X_0)\dxi(X_\ell)]$, where $X_0$ starts from stationarity under $\E_\pi$.

\begin{prop}\label{prop:expbdd}
Suppose that {\rm Assumption~\ref{ass:stat}} and {\rm Assumption~\ref{ass:vm}} are in force.\\

\noindent {\rm 1$^\circ$)} For all $a\in (0,\infty)$ and $\ell\in \Bbb N$, it holds that
\begin{align}\label{exp:Well}
&\sup_{n\in \Bbb N}\sup_{\lambda\in \ms P(S^{E_n})}\E^{(n)}_\lambda\left[\exp\left\{a\gamma_n\nu_n(\1)\int_0^t W_\ell(\xi_{\gamma_n s})ds\right\}\right]<\infty,\quad \forall\;t\in (0,\infty),\\
&\lim_{\theta\searrow  0+} \sup_{n\in \Bbb N}\sup_{\lambda\in \ms P(S^{E_n})}\E^{(n)}_\lambda\left[\exp\left\{a\gamma_n\nu_n(\1)\int_0^\theta W_\ell(\xi_{\gamma_n s})ds\right\}\label{exp:Well-vanish}
\right]=1.
\end{align}

\noindent {\rm 2$^\circ$)} If $\mu_n= 0$ for all $n$, then for every $\ell\in \Bbb N$ we can find $a\in (0,\infty)$ small enough such that the inequality in (\ref{exp:Well}) with $t=\infty$ holds.
\end{prop}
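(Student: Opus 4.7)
The core tool is a Khas'minskii-type exponential-moment lemma: for a strong Markov process $(\xi_t)$ and a nonnegative bounded functional $f$, if $K_f := \sup_\xi \E_\xi[\int_0^T f(\xi_s)\,ds] \leq 1/2$ then $\sup_\xi \E_\xi[\exp(\int_0^T f(\xi_s)\,ds)] \leq 2$, and the bound propagates along $[0,t]$ by the strong Markov property to $2^{\lceil t/T\rceil}$. Proving the proposition therefore reduces to controlling the mean additive functional
\begin{equation*}
\E^{(n)}_\lambda\left[a\gamma_n\nu_n(\1)\int_0^T W_\ell(\xi_{\gamma_n s})\,ds\right] = a\nu_n(\1)\int_0^{\gamma_n T}\E^{(n)}_\lambda[W_\ell(\xi_u)]\,du
\end{equation*}
uniformly in $n$ and in the initial law $\lambda\in\ms P(S^{E_n})$.

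For the mean bound I would invoke the Feynman--Kac duality (\ref{MG}) through its consequence (\ref{mombdd}), giving, uniformly in $\lambda$,
\begin{equation*}
\E^{(n)}_\lambda[\xi_u(x)\dxi_u(y)] \leq \P(M_{x,y}>u) + C(1-e^{-\mu_n(\1)u})\P(M_{x,y}>u) + C\mu_n(\1)\int_0^u \P(M_{x,y}>s)\,ds.
\end{equation*}
Summing against the weights $\pi^{(n)}(x)q^{(n),\ell}(x,y)$ and using $\int_0^\infty \P(M_{x,y}>s)\,ds = \E[M_{x,y}]$, the mutation-free case of 2$^\circ$) yields
\begin{equation*}
a\nu_n(\1)\int_0^\infty \E^{(n)}_\lambda[W_\ell(\xi_u)]\,du \leq a\nu_n(\1)\sum_{x,y}\pi^{(n)}(x)q^{(n),\ell}(x,y)\E[M_{x,y}].
\end{equation*}
The main task is to show that this right-hand side is $O(1)$ uniformly in $n$, via the comparison $\sum_{x,y}\pi^{(n)}(x)q^{(n),\ell}(x,y)\E[M_{x,y}] = O(\gamma_n)$ combined with $\gamma_n\nu_n(\1)=O(1)$. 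Both should follow from Assumption~\ref{ass:stat}, which forces $\nu_n(\1)\asymp 1/N_n$ and $\pi_{\max}^{(n)}/\pi_{\min}^{(n)}=O(1)$, together with reversibility of $q^{(n)}$ and the fact that $\pi^{(n)}$ is stationary for $q^{(n),\ell}$. Taking $a$ small enough to make the bound at most $1/2$ and applying Khas'minskii with $T=\infty$ then gives (\ref{exp:Well}) for $t=\infty$, which is the content of 2$^\circ$).

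For 1$^\circ$), the integral in the mean bound runs only up to $\gamma_n\theta$. The mutation correction is then $O((\mu_n(\1)\gamma_n\theta)^2)=O(\theta^2)$ uniformly in $n$, since $\gamma_n\mu_n$ is convergent by Theorem~\ref{thm:WF}~(ii) under Assumption~\ref{ass:vm}; the first term is $O(\nu_n(\1)\gamma_n\theta)=O(\theta)$ using $\P(M_{x,y}>u)\leq 1$ and the bound $\gamma_n\nu_n(\1)=O(1)$ discussed in the previous paragraph. Altogether
\[
\sup_{n,\lambda}\E^{(n)}_\lambda\Big[a\gamma_n\nu_n(\1)\int_0^\theta W_\ell(\xi_{\gamma_n s})\,ds\Big]=K(\theta)\quad\text{with }K(\theta)\to 0\text{ as }\theta\to 0.
\]
Khas'minskii gives $\sup_{n,\lambda}\E^{(n)}_\lambda[\exp(a\gamma_n\nu_n(\1)\int_0^\theta W_\ell\,ds)]\leq (1-K(\theta))^{-1}$ for $\theta$ small, and the Markov property extends this to $[0,t]$ as $(1-K(\theta))^{-\lceil t/\theta\rceil}$, proving (\ref{exp:Well}); letting $\theta\downarrow 0$ in the same estimate furnishes (\ref{exp:Well-vanish}).

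The hard part will be the uniform-in-$n$ bound $\nu_n(\1)\sum_{x,y}\pi^{(n)}(x)q^{(n),\ell}(x,y)\E[M_{x,y}] = O(1)$ (needed for 2$^\circ$)) and the slightly weaker $\gamma_n\nu_n(\1)=O(1)$ (essential for 1$^\circ$)). If a direct combinatorial argument based on reversibility and Assumption~\ref{ass:stat} proves too crude, a softer route goes through Lemma~\ref{lem:qv}, which expresses $\langle M,M\rangle_t$ as a time integral of a $\nu_n$-weighted two-point function: in the no-mutation case the convergence of $Y^{(n)}$ to a Wright--Fisher martingale (whose expected total quadratic variation is finite) would force $\sup_n\E[\langle M^{(n)}\rangle_\infty]<\infty$, which by Feynman--Kac duality translates back to the required weighted meeting-time estimate.
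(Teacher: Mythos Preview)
Your Khas'minskii scheme and the use of duality via (\ref{mombdd}) are exactly what the paper does, and for 2$^\circ$) your outline is essentially correct: the paper proves $\sup_n \nu_n(\1)\E^{(n)}_\pi[M_{X_0,X_\ell}]<\infty$ (its Step~1, equations (\ref{numom})--(\ref{ell=1})) by an identity from \cite{CCC} for $\ell=1$ and an induction in $\ell$ via the first-jump decomposition of the coalescing pair, which is what your vague ``reversibility and stationarity'' appeal needs to be made precise.

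For 1$^\circ$), however, there is a genuine gap. You assert $\gamma_n\nu_n(\1)=O(1)$ and use it to bound the leading term by $O(\theta)$ via the crude inequality $\P(M_{x,y}>u)\leq 1$. But Assumption~\ref{ass:vm} explicitly allows $\sup_n \gamma_n/N_n=\infty$ (this is why its second paragraph singles out that case), and then $\gamma_n\nu_n(\1)\to\infty$ and your bound blows up. Bounding the integral instead by its value at $\theta=\infty$ gives $\nu_n(\1)\E^{(n)}[M_{X_0,X_\ell}]=O(1)$, but that does not vanish as $\theta\to 0$. What is actually needed is the intermediate estimate
\[
\sup_n\,\gamma_n\nu_n(\1)\int_0^\delta \P^{(n)}(M_{X_0,X_\ell}>\gamma_n s)\,ds\leq \vep\quad\text{for }\delta\text{ small},
\]
which says the rescaled meeting-time tail is uniformly integrable on the $\gamma_n$-scale. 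The paper obtains this (its Step~2, equations (\ref{Wellvep1})--(\ref{meetingtime})) by invoking \cite[Theorem~4.1]{CCC}, which converts the assumed Wright--Fisher convergence under Bernoulli initial laws into the bound $\limsup_n\gamma_n\nu_n(\1)\int_0^t\P^{(n)}(M_{X_0,X_1}>\gamma_n s)\,ds\leq C(1-e^{-t})$. Your ``softer route'' via $\langle M^{(n)}\rangle$ is in the right spirit---the connection between quadratic variation and meeting times is precisely what \cite{CCC} exploits---but it does not bypass this step; it \emph{is} this step, and it uses the extra clause of Assumption~\ref{ass:vm} that you did not touch.
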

\begin{proof}
1$^\circ$) We proceed with the following steps to prove (\ref{exp:Well}) and (\ref{exp:Well-vanish}), which start with three claims. \\

\noindent \emph{Step 1.} We claim that
\begin{align}\label{numom}
\forall\;\ell\geq 1,\quad \sup_{n\in \Bbb N}\nu_n(\1)\E^{(n)}[M_{X_0,X_\ell}]\leq \ell \left(\sup_{n\in \Bbb N}\nu_n(\1)\E^{(n)}[M_{X_0,X_1}]\right)<\infty.
\end{align}
To see (\ref{numom}) for $\ell=1$,
recall that $\pi^{(n)}$'s are comparable to uniform distributions
 by Assumption~\ref{ass:stat} and we have \begin{align}
\sum_{x,y\in E_n}\pi^{(n)}(x)^2q^{(n)}(x,y)\E^{(n)}_\pi[M_{x,y}]
= \frac{1-\sum_{x\in E_n}\pi^{(n)}(x)^2}{2}
\label{ell=1-0}
\end{align}
(cf. \cite[(3.17)]{CCC}). These two facts imply that
\begin{align}
\sup_{n\in \Bbb N}\nu_n(\1)\E^{(n)}_\pi[M_{X_0,X_1}]\leq & \sup_{n\in \Bbb N}\left(\frac{\pi^{(n)}_{\max}}{\pi^{(n)}_{\min}}\right)\Bigg( \sum_{x,y\in E_n}\pi^{(n)}(x)^2q^{(n)}(x,y)\E^{(n)}_\pi[M_{x,y}]\Bigg)<\infty\label{ell=1}
\end{align}
and so the inequality in (\ref{numom}) with $\ell=1$ follows.

To obtain (\ref{numom}) for $\ell\geq 2$, first notice that the strong Markov property of $(B^x,B^y)$ at its first epoch time gives, for all $x\neq y$ and $F:E\times E\to \R$ with $F(u,u)\equiv 0$,
\begin{align}
\begin{split}\label{F_recur}
&\E^{(n)}[F(B^x_t,B^y_t)]=F(x,y)e^{-2t}\\
&\hspace{1cm}+\int_0^t e^{-2(t-s)}\sum_{z\in E_n}\left(q^{(n)}(x,z)\E^{(n)}[F(B^z_s,B^y_s)]+q^{(n)}(y,z)\E^{(n)}[F(B^x_s,B^z_s)]\right)ds.
\end{split}
\end{align}
In particular, if we take $F(u,v)=\1_{\{u\neq v\}}$ so that
$\E^{(n)}[F(B^x_t,B^y_t)]=\P^{(n)}(M_{x,y}>t)$, then (\ref{F_recur}) implies that
\begin{align}\int_0^t\P^{(n)}_\pi(M_{X_0,X_\ell-1}>s)ds
=&\Big(\frac{1-e^{-2t}}{2}\Big)\P^{(n)}_\pi(X_0\neq X_{\ell-1})\notag\\
&+\int_0^t\left(\frac{1-e^{-2(t-s)}}{2}\right)\P^{(n)}_\pi(M_{Y_1,X_{\ell-1}}>s,X_0\neq  X_{\ell-1})ds\notag\\
&+\int_0^t\left(\frac{1-e^{-2(t-s)}}{2}\right)\P^{(n)}_\pi(M_{X_0,X_{\ell}}>s,X_0\neq  X_{\ell-1})ds\notag\\
\begin{split}\label{eq:meetingtime}
=&
\Big(\frac{1-e^{-2t}}{2}\Big)\P^{(n)}_\pi(X_0\neq X_{\ell-1})
+\int_0^t(1-e^{-2(t-s)})\P^{(n)}_\pi(M_{X_0,X_{\ell}}>s)ds\\
&-\int_0^t\big(1-e^{-2(t-s)}\big)\P^{(n)}_\pi(M_{X_0,X_{\ell}}>s,X_0= X_{\ell-1})ds
\end{split}
\end{align}
by the reversibility of $q^{(n)}$.
Passing $t$ to infinity for both sides of (\ref{eq:meetingtime}) and using the integrability of each meeting time $M_{x,y}$, we deduce the following inequality:
\[
\E^{(n)}_{\pi}[M_{X_0,X_{\ell}}]\leq \E^{(n)}_{\pi}[M_{X_0,X_{\ell-1}}]+\E^{(n)}_{\pi}[M_{X_0,X_{1}}], 
\]
which is enough for (\ref{numom}) for all $\ell\geq 2$ by (\ref{ell=1}) and iteration.\\

\noindent \emph{Step 2.} We claim the following uniform continuity: 
\begin{align}\label{Wellvep1}
\forall\;\ell\geq 1\;
\forall\;\vep\in (0,1)\;\exists\;\delta\in (0,1),\;\;\sup_{n\in \Bbb N}\gamma_n\nu_n(\1)\int_0^{\delta}\P^{(n)}(M_{X_0,X_\ell}>\gamma_n s)ds\leq \vep.
\end{align}
It suffices to consider the case $\sup_n \gamma_n/N_n=\infty$ thanks to the fact that $\nu_n(\1)=\Theta(N_n^{-1})$ by Assumption~\ref{ass:stat}. Now
we use the part in Assumption~\ref{ass:vm} stating that (\ref{conv:WF}) holds for all initial laws as Bernoulli product measures with constant densities in the absence of mutation. Then it follows from  \cite[Theorem~4.1]{CCC} that 
\begin{align}\label{meetingtime}
\limsup_{n\to\infty}\gamma_n\nu_n(\1)\int_0^t \P^{(n)}(M_{X_0,X_1}>\gamma_n s)ds\leq C_{\ref{meetingtime}}(1-e^{-t}),\quad t\geq 0,
\end{align}
where $C_{\ref{meetingtime}}$ depends only on $\limsup\pi^{(n)}_{\max}/\pi^{(n)}_{\min}$ (this limit supremum is finite by Assumption~\ref{ass:stat}). The uniform continuity in (\ref{Wellvep1}) for $\ell=1$ then follows from (\ref{meetingtime}). The proof for general $\ell\geq 2$ can be 
obtained by iterating (\ref{eq:meetingtime}) and using (\ref{meetingtime}) 
since 
\[
\nu_n(\1)\gamma_n\int_0^{t}e^{-\gamma_n (t-s)}ds\leq \nu_n(\1)\quad\mbox{for every }t\geq 0.
\]

\noindent \emph{Step 3.} We claim the following uniform continuity similar to the one in (\ref{Wellvep1}): 
\begin{align}\label{Wellvep}
\forall\;\ell\geq 1\;\forall\;\vep\in (0,1)\;\exists\;\delta \in (0,1),\;\;\sup_{n\in \Bbb N}\sup_{\lambda\in \ms P(S^{E_n})}\E_\lambda^{(n)}\left[\gamma_n\nu_n(\1)\int_0^{\delta}W_\ell(\xi_{\gamma_ns})ds\right]\leq \vep.
\end{align}
We use the following consequence of (\ref{mombdd}):
\begin{align}\label{int:bdd}
&\left|\int_0^t\E^{(n)}_\xi[\xi_{ s}(x)\dxi_{s}(y)]ds-\int_0^{t}\E^{(n)}[\xi(B^x_s)\dxi(B^y_s)]ds\right|\notag\\
&\hspace{2cm}\leq  C_{\ref{mombdd}}\int_0^{t}(1-e^{-\mu_n(\1)s})\P^{(n)}(M_{x,y}>s)ds+C_{\ref{mombdd}}\mu_n(\1)\E^{(n)}[M_{x,y}]\cdot t.
\end{align}
By (\ref{int:bdd}), we see that, for all $\lambda\in \ms P(S^{E_n})$,
\begin{align}
\begin{split}
&\E_\lambda^{(n)}\left[\gamma_n\nu_n(\1)\int_0^t W_\ell(\xi_{\gamma_n s})ds\right]\\
&\leq (1+C_{\ref{mombdd}})\gamma_n\nu_n(\1)\int_0^t\P^{(n)}(M_{X_0,X_\ell}>\gamma_n s)ds+C_{\ref{mombdd}}\gamma_{n}\mu_{n}(\1)\cdot \nu_n(\1)\E^{(n)}[M_{X_0,X_\ell}]t.\label{bdd:Well-0}
\end{split}
\end{align}
By (\ref{bdd:Well-0})
and the inequality $\sup_n \gamma_n\mu_n(\1)<\infty$ (implied by Assumption~\ref{ass:vm}), the first two claims in (\ref{numom}) and (\ref{Wellvep1}) are enough for (\ref{Wellvep}).\\

\noindent \emph{Step 4.}
Observe that for any $m\geq 1$,
the Markov property of voter models implies that
\begin{align}
\begin{split}\label{Markovit0}
&\sup_{\lambda\in \ms P(S^{E_n})}\E^{(n)}_\lambda\left[\exp\left\{a\gamma_n\nu_n(\1)\int_0^t W_\ell(\xi_{\gamma_n s})ds\right\}\right]\\
&\hspace{4cm}\leq \left(\sup_{\lambda\in \ms P(S^{E_n})}\E^{(n)}_\lambda\left[\exp\left\{a\gamma_n\nu_n(\1)\int_0^{t/m} W_\ell(\xi_{\gamma_n s})ds\right\}\right]\right)^m
\end{split}
\end{align}
and
\begin{align}
&\sup_{\lambda\in \ms P(S^{E_n})}\E_\lambda^{(n)}\left[\left(\gamma_n\nu_n(\1)\int_0^{t}W_\ell(\xi_{\gamma_ns})ds\right)^m\right]\notag\\
=&m!\cdot \sup_{\lambda\in \ms P(S^{E_n})}\E_\lambda^{(n)}\left[\big(\gamma_n\nu_n(\1)\big)^m\int_0^{t} ds_1\int_{s_1}^{t} ds_2\cdots \int_{s_{m-1}}^{ t} ds_m\prod_{i=1}^mW_\ell(\xi_{\gamma_n s_i})\right]\notag\\
\leq &m!\cdot \left(\sup_{\lambda\in \ms P(S^{E_n})}\E_\lambda^{(n)}\left[\gamma_n\nu_n(\1)\int_0^{t}W_\ell(\xi_{\gamma_ns})ds\right]\right)^m.
\label{Markovit}
\end{align}
For the proof of (\ref{exp:Well}) with fixed $t\in (0,\infty)$ and $\ell\geq 1$, 
we choose $\delta$ according to the uniform continuity in (\ref{Wellvep}) with $\vep=1/(2a)$ and then $m$ large such that $t/m\leq \delta$. By (\ref{Markovit0}) for the first inequality below and (\ref{Markovit}) for the second, we have: 
\begin{align*}
&\sup_{n\in \Bbb N}\sup_{\lambda\in \ms P(S^{E_n})}\E^{(n)}_\lambda\left[\exp\left\{a\gamma_n\nu_n(\1)\int_0^t W_\ell(\xi_{\gamma_n s})ds\right\}\right]\\
\leq &\sup_{n\in \Bbb N}\left(\sum_{m'=0}^\infty \frac{a^{m'}}{(m')!}\sup_{\lambda\in \ms P(S^{E_n})}\E_\lambda^{(n)}\left[\left(\gamma_n\nu_n(\1)\int_0^{t/m}W_\ell(\xi_{\gamma_ns})ds\right)^{m'}\right] \right)^m\\
\leq &\left(\sup_{n\in \Bbb N}\sum_{m'=0}^\infty a^{m'}\left(\sup_{\lambda\in \ms P(S^{E_n})}\E_\lambda^{(n)}\left[\gamma_n\nu_n(\1)\int_0^{t/m }W_\ell(\xi_{\gamma_ns})ds\right]\right)^{m'}\right)^m
\leq \left(\sum_{m'=0}^\infty \frac{1}{2^{m'}}\right)^m<\infty. 
\end{align*}
The proof of (\ref{exp:Well-vanish}) follows similarly if we argue as above with $t$ replaced by $\theta$ and $m$ set to be $1$. 
We have proved {\rm 1$^\circ$)}. \\

\noindent {\rm 2$^\circ$)} The proof of 2$^\circ$) follows almost the same line as the proof of (\ref{exp:Well-vanish}) except that we do not need to handle the second term on the right-hand side of (\ref{bdd:Well-0}), which is due to mutation. In more detail, now we consider
\begin{align*}
&\sup_{n\in \Bbb N}\sup_{\lambda\in \ms P(S^{E_n})}\E^{(n)}_\lambda\left[\exp\left\{a\gamma_n\nu_n(\1)\int_0^\infty  W_\ell(\xi_{\gamma_n s})ds\right\}\right]\\
\leq &\sup_{n\in \Bbb N}\sum_{m'=0}^\infty a^{m'}\left(\sup_{\lambda\in \ms P(S^{E_n})}\E_\lambda^{(n)}\left[\gamma_n\nu_n(\1)\int_0^{\infty }W_\ell(\xi_{\gamma_ns})ds\right]\right)^{m'}\\
\leq &\sum_{m'=0}^\infty a^{m'}\left((1+C_{\ref{mombdd}})\ell \sup_{n\in \Bbb N}\nu_n(\1)\E^{(n)}[M_{X_0,X_1}]\right)^{m'},
\end{align*}
where the last inequality follows from (\ref{numom}) and
(\ref{bdd:Well-0}), with $t$ sent to infinity in (\ref{bdd:Well-0}). 
By (\ref{numom}), we can choose $a>0$ small enough such that the last infinite series is finite. This proves 2$^\circ)$.
\end{proof}

For any vector martingale $A$, we write $\langle A,A\rangle$ for the matrix of predictable covariations between components of $A$. If $A$ is a vector semimartingale, then $[A,A]$ denotes the matrix of covariations between its components.  
The following proposition proves Theorem~\ref{thm:main2} 1$^\circ$).

\begin{prop}\label{prop:tight}
If conditions {\rm (i)--(iii)} of  {\rm Theorem~\ref{thm:main2}} are in force, then the following holds.\\

\noindent {\rm 1$^\circ$)} The sequence of laws of
$\big(Z^{(n)},\big\langle (M^{(n)},D^{(n)}),(M^{(n)},D^{(n)})\big\rangle\big)$ under $\P^{(n)}_{\lambda_n}$ is $C$-tight. \\

\noindent {\rm 2$^\circ$)} Suppose that, by choosing a subsequence if necessary, the sequence of laws of $Z^{(n)}$ under $\P^{(n)}_{\lambda_n}$ converges weakly to the law of $Z=(Y,M,D)$ under $\P^{(\infty)}$,  then $Z$ is a continuous vector semimartingale, $M$ is the martingale part of $Y$, $(M,D)$ is a vector martingale with respect to the filtration generated by $(Y,M,D)$, and we have the following convergence:
\begin{align}\label{conv:MD}
\begin{split}
&\Big(Z^{(n)},\big \langle (M^{(n)},D^{(n)}),(M^{(n)},D^{(n)})\big\rangle,\big [ (M^{(n)},D^{(n)}),(M^{(n)},D^{(n)})\big]\Big)\\
&\hspace{4cm}\xrightarrow[n\to\infty]{\rm (d)}\Big(Z,\big [ (M,D),(M,D)\big],\big [ (M,D),(M,D)\big]\Big).
\end{split}
\end{align}

\noindent {\rm 3$^\circ$)} In the context of {\rm 2$^\circ$)}, the sequence of laws of $(Y^{(n)},M^{(n)})$ under $\P^{(n),w_{n}}_{\lambda_{n}}$ converges weakly to the law of $(Y,M)$ under $D\cdot \P^{(\infty)}$.
\end{prop}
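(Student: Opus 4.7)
The plan is to establish $C$-tightness component by component using the a-priori bounds from Section~\ref{sec:radon} and Proposition~\ref{prop:expbdd}, then identify the limiting martingales through uniform integrability, and finally deduce Part~3$^\circ$) by a change-of-measure argument.

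\textbf{$C$-tightness (Part 1$^\circ$).} First, $C$-tightness of $Y^{(n)}$ is immediate from Assumption~\ref{ass:vm}. For $M^{(n)}$, applying the time change $t \mapsto \gamma_n t$ to (\ref{eq:Y}) yields
\begin{align*}
M^{(n)}_t = Y^{(n)}_t - Y^{(n)}_0 - \int_0^t\big[\gamma_n\mu_n(1)(1-Y^{(n)}_s) - \gamma_n\mu_n(0)Y^{(n)}_s\big]\,ds,
\end{align*}
where the drift term is equicontinuous in $t$ uniformly in $n$ since $\gamma_n\mu_n \to \mu$. So $M^{(n)}$ inherits $C$-tightness from $Y^{(n)}$ once its jumps vanish; but each jump of $M^{(n)}$ is bounded in absolute value by $\pi^{(n)}_{\max} = O(1/N_n)$ by Assumption~\ref{ass:stat} and (\ref{eq:M}). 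For $D^{(n)}$, I would apply the Aldous--Rebolledo criterion: the predictable quadratic variation $\langle D^{(n)},D^{(n)}\rangle$ is $C$-tight by Lemma~\ref{lem:qvbdd} combined with Proposition~\ref{prop:expbdd}, noting that $w_n^2\pi^{-1}_{n,\min} = \Theta(\nu_n(\1))$ precisely balances the factors on the right-hand side of (\ref{tight_bdd1}) after the time change; the relative jumps of $D^{(n)}$ are $O(w_n)$ applied to a process bounded in $L^a$ by Proposition~\ref{prop:Dbdd}, hence vanish in probability. The $C$-tightness of the predictable covariation matrix $\langle (M^{(n)},D^{(n)}),(M^{(n)},D^{(n)})\rangle$ follows from the same combination of Lemma~\ref{lem:qv}, Lemma~\ref{lem:qvbdd}, and Proposition~\ref{prop:expbdd}, as the integrands are polynomial in the weighted two-point functions $W_\ell$ multiplied by $D^{(n)}$ or its square.

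\textbf{Identification of the limit (Part 2$^\circ$).} Along the convergent subsequence, let $(Y,M,D)$ be the limit under $\P^{(\infty)}$. Continuity is guaranteed by $C$-tightness, so passing to the limit in (\ref{eq:Y}) identifies $M$ as the continuous martingale part of $Y$. To verify that $(M,D)$ is a martingale in the filtration generated by $Z = (Y,M,D)$ jointly, I would show uniform integrability of $\{M^{(n)}_t\}$ and $\{D^{(n)}_t\}$ on compact intervals: the former via $\E[(M^{(n)}_t)^2] = \E[\langle M^{(n)},M^{(n)}\rangle_t]$ together with (\ref{MM}) and Proposition~\ref{prop:expbdd}, the latter via $\E[(D^{(n)}_t)^{a}]$ for some $a>1$ using (\ref{cond:ui}) and (\ref{exp:Well}). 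Standard passage to the limit then transfers the martingale property (cf.\ \cite[Corollary~IX.1.19]{JS}). For the joint convergence (\ref{conv:MD}), since the limits are continuous one has $[(M,D),(M,D)] = \langle (M,D),(M,D)\rangle$, and the $C$-tightness of both the processes and their predictable quadratic variations, combined with the fact that $(M^{(n)})^2 - \langle M^{(n)},M^{(n)}\rangle$ and its analogues are $L^1$-bounded martingales with vanishing jumps, forces $\langle (M^{(n)},D^{(n)}),(M^{(n)},D^{(n)})\rangle$ and $[(M^{(n)},D^{(n)}),(M^{(n)},D^{(n)})]$ to share the same limit.

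\textbf{Change of measure (Part 3$^\circ$).} Since $d\P^{(n),w_n}|_{\F^{(n)}_t} = D^{(n)}_t\, d\P^{(n)}|_{\F^{(n)}_t}$, for any bounded continuous functional $F$ on $D([0,t];\R^2)$ we have
\begin{align*}
\E^{(n),w_n}_{\lambda_n}\big[F(Y^{(n)},M^{(n)})\big] = \E^{(n)}_{\lambda_n}\big[D^{(n)}_t\, F(Y^{(n)},M^{(n)})\big].
\end{align*}
The joint convergence from Part~2$^\circ$) together with uniform integrability of $\{D^{(n)}_t\}_n$ (from the $L^a$-bound in Proposition~\ref{prop:Dbdd} with $a > 1$, combined with Proposition~\ref{prop:expbdd}) lets us take the limit inside to obtain $\E^{(\infty)}[D_t\, F(Y,M)]$, which is exactly the expectation of $F(Y,M)$ under $D\cdot \P^{(\infty)}$.

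\textbf{Main obstacle.} The delicate point is the joint martingale identification in Part~2$^\circ$): we must verify that $(M,D)$ is a martingale with respect to the filtration generated by $(Y,M,D)$ jointly, not merely with respect to the individual marginal filtrations. This requires uniform $L^p$-control ($p>1$) of $M^{(n)}_t$ and $D^{(n)}_t$ uniformly in $n$ and in $t$ on compact sets, and careful application of the portmanteau theorem together with the conditional expectation characterization of martingales in order to preserve the martingale property through the subsequential limit.
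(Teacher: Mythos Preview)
Your proposal is correct and follows essentially the same route as the paper: component-wise $C$-tightness via Aldous--Rebolledo using the bounds from Lemma~\ref{lem:qvbdd} and Proposition~\ref{prop:expbdd}, $L^p$-boundedness on compacts (for $p>1$) to transfer the martingale property and identify covariations, and uniform integrability of $D^{(n)}_t$ for the change of measure in Part~3$^\circ$). The paper's only notable variation is that it deduces Aldous's condition for $\langle M^{(n)},M^{(n)}\rangle$ indirectly from the already-established $C$-tightness of $M^{(n)}$ (via $\E[\langle M^{(n)},M^{(n)}\rangle_T-\langle M^{(n)},M^{(n)}\rangle_S]=\E[(M^{(n)}_T-M^{(n)}_S)^2]\le \E[w_{K+1}(M^{(n)},\theta)^2]$) rather than from a direct bound on the integrand, but your direct approach via (\ref{MM}) and Proposition~\ref{prop:expbdd} works equally well.
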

\begin{proof}
1$^\circ$) By \cite[Proposition 3.2.4]{EK:MP}, it is enough to prove that all the sequences of laws of components of the multi-dimensional processes under consideration are $C$-tight.\\ 

\noindent \emph{$C$-tightness of the sequence $\big\{\ms L\big(Y^{(n)}\big)\big\}$.} This follows readily from (\ref{conv:WF}) in  Assumption~\ref{ass:vm}. \\

\noindent \emph{$C$-tightness of the sequence $\big\{\ms L\big(M^{(n)}\big)\big\}$.} Recall that Assumption~\ref{ass:vm} implies that $\gamma_n\mu_n\to \mu$. Then it follows from the decomposition (\ref{eq:Y}) of $Y^{(n)}$ and (\ref{conv:WF}) in  Assumption~\ref{ass:vm} that $M^{(n)}$ are martingales uniformly bounded on compacts and converge in distribution to $M$. In particular, the required $C$-tightness follows.\\

\noindent \emph{$C$-tightness of the sequence $\big\{\ms L\big(\langle M^{(n)},M^{(n)}\rangle\big)\big\}$.}
By \cite[Proposition~III.3.26]{JS}, it is enough to 
obtain tightness of the sequence under consideration. Then by \cite[Theorem~VI.4.5]{JS}, we need to verify the compact containment condition 
\begin{align}
&\forall\;\vep,t>0\;\exists\, K>0\mbox{ such that }\sup_{n\in \Bbb N}\P^{(n)}_{\lambda_n}\big(\big\langle M^{(n)},M^{(n)}\big\rangle_t\geq K\big)\leq \vep,\label{cond1}
\end{align}
and Aldous's condition :
\begin{align}
&\forall\;\vep,K>0,\; \lim_{\theta\to 0+}\limsup_{n\to\infty}\sup_{\stackrel{\scriptstyle S,T\in \ms T(n,K)}{\scriptstyle  S\leq T\leq S+\theta}}\P^{(n)}_{\lambda_n}\Big(\big\langle M^{(n)},M^{(n)}\big\rangle_T-\big\langle M^{(n)},M^{(n)}\big\rangle_S\geq \vep\Big)=0.\label{cond2}
\end{align}
For (\ref{cond1}), note that (\ref{MM}) implies
\[
\langle M^{(n)},M^{(n)}\rangle_t\leq 2\left(\frac{\pi^{(n)}_{\max}}{\pi^{(n)}_{\min}}\right)\left( \gamma_n\nu_n(\1)\int_0^t W_1(\xi_{\gamma_ns})ds\right)+\pi^{(n)}_{\max}\gamma_n \mu_n(\1)t,
\]
where the function $W_1$ is defined by (\ref{def:Pell}).
By Proposition~\ref{prop:expbdd} $1^\circ)$, the foregoing inequality and  the validity of condition (ii) of Theorem~\ref{thm:WF}, we deduce that $\langle M^{(n)},M^{(n)}\rangle$ are $L^p$-bounded on compacts for every $p\in [1,\infty)$. This is enough for (\ref{cond1}).

Next, we verify (\ref{cond2}).
For $J,n,K\geq 1$, define
\[
w_J(\alpha,\theta)=\sup_{0\leq t\leq t+\theta\leq J}\sup_{a,b\in [t,t+\theta]}|\alpha(a)-\alpha(b)|
\]
for c\`adl\`ag functions $\alpha:[0,\infty)\to \R$,
and $\ms T(n,K)$ to be the set of all $(\F_t^{(n)})$-stopping times bounded by $K$.
Recall that $M^{(n)}$'s are uniformly bounded on compacts. Then for all $\theta\in(0,1]$ and $S,T\in \ms T(n,K)$
satisfying $0\leq S\leq T\leq S+ \theta$, it follows from the martingale characterization of $\langle M^{(n)},M^{(n)}\rangle$ (cf. \cite[Theorem~I.4.2]{JS}) and the optional stopping theorem \cite[Theorem~II.3.3]{Revuz_2005} that
\begin{align*}
\E^{(n)}_{\lambda_n}\big[\langle M^{(n)},M^{(n)}\rangle_T-\langle M^{(n)},M^{(n)}\rangle_S\big]=&\E_{\lambda_n}^{(n)}\big[(M^{(n)}_T)^2-(M^{(n)}_S)^2\big]\\
=&\E_{\lambda_n}^{(n)}\big[(M^{(n)}_T-M^{(n)}_S)^2\big]\leq \E_{\lambda_n}^{(n)}\big[w_{K+1}(M^{(n)},\theta)^2\big].
\end{align*}
By \cite[Proposition~VI.3.26]{JS}, dominated convergence and the convergence in distribution of $M^{(n)}$ towards the continuous process $M$, we see that the foregoing inequality implies
\begin{align}\label{limitMM}
\lim_{\theta\to 0+}\limsup_{n\to\infty}\sup_{\stackrel{\scriptstyle S,T\in \ms T(n,K)}{\scriptstyle  S\leq T\leq S+\theta}}\E^{(n)}_{\lambda_n}\big[\langle M^{(n)},M^{(n)}\rangle_T-\langle M^{(n)},M^{(n)}\rangle_S\big]=0.
\end{align}
Aldous's condition in (\ref{cond2}) is then satisfied by Chebyshev's inequality and (\ref{limitMM}). The required $C$-tightness follows. 
\\

\noindent \emph{$C$-tightness of the sequence $\big\{\ms L\big(\langle D^{(n)},D^{(n)}\rangle \big)\big\}$.}
By  \cite[Proposition~VI.3.26]{JS}, it is enough to verify tightness of the sequence. For this, we verify the compact containment condition and Aldous's condition for $\langle D^{(n)},D^{(n)}\rangle$ again, that is, analogues of (\ref{cond1}) and (\ref{cond2}) for $\langle D^{(n)},D^{(n)}\rangle$.
First, for the compact containment condition, we have
\begin{align}\label{wc}
w_n^2(\pi_{\min}^{(n)})^{-1}\leq C_{\ref{wc}}\nu_n(\1)
\end{align}
by Assumption~\ref{ass:stat} and Assumption~\ref{ass:w}
so that (\ref{exp:Well}) is applicable to the moment bounds in (\ref{tight_bdd1}). 
Next, for Aldous's condition, we take $\theta\in (0,1]$ and obtain from the strong Markov property of voter models that
\begin{align}
&\sup_{n\in \Bbb N}\sup_{\stackrel{\scriptstyle S,T\in \ms T(n,K)}{\scriptstyle  S\leq T\leq S+\theta}}\E^{(n)}_{\lambda_n}\big[\langle D^{(n)},D^{(n)}\rangle_T-\langle D^{(n)},D^{(n)}\rangle_S\big]\notag\\
\begin{split}\label{Dtight}
\leq &\sup_{n\in \Bbb N}\sup_{\lambda\in \ms P(S^{E_n})}C_{\ref{Dtight}}\sum_{\ell=1}^4
\E^{(n)}_{\lambda}\left[\exp\left(C_{\ref{Dtight}}\gamma_n\nu_n(\1)
\int_0^\theta W_\ell(\xi_{s})ds\right)\right]^{1/2}\\
&\hspace{4cm}\times \E_{\lambda}^{(n)}\left[\left(\gamma_n\nu_n(\1)
\int_0^\theta  W_\ell(\xi_{s})ds\right)^{2a}\right]^{1/2}
\xrightarrow[\theta\to 0+]{}0
\end{split}
\end{align}
for some constant $C_{\ref{Dtight}}$ depending only on $(\Pi,a)$.
Here in (\ref{Dtight}), 
the inequality follows from (\ref{tight_bdd1}) and (\ref{wc}), and the convergence follows from (\ref{exp:Well}) and (\ref{exp:Well-vanish}). \\

\noindent \emph{$C$-tightness of the sequence $\big\{\ms L\big(\langle M^{(n)},D^{(n)}\rangle \big)\big\}$.} The proof is similar to the previous one by verifying conditions analogous to (\ref{cond1}) and (\ref{cond2}) for $\langle M^{(n)},D^{(n)}\rangle$; this uses the moment bound in (\ref{tight_bdd2}) for ${\rm Var}\big(\langle M^{(n)},D^{(n)}\rangle\big)$ now. We omit the details.
In fact, we can get the $C$-tightness of the sequences of laws of $\langle M^{(n)}\pm D^{(n)},M^{(n)}\pm D^{(n)}\rangle$.\\

\noindent \emph{$C$-tightness of the sequence $\big\{\ms L\big(D^{(n)} \big)\big\}$.}
By the $C$-tightness of the sequence $\big\{\ms L\big( \langle D^{(n)},D^{(n)}\rangle \big)\big\}$ proven above, the sequence  $\big\{\ms L\big(D^{(n)}\big)\big\}$ is tight by \cite[Theorem~VI.4.13]{JS}. In addition, it follows from (\ref{taylor})
that, for $(x,y)$ such that $q^{(n)}(x,y)>0$, 
\begin{align}\label{qdiff}
\|q^{(n),w_n}(x,y,\cdot)/q^{(n)}(x,y)-1\|_\infty \leq C_{\ref{qdiff}}w_n
\end{align} 
for some constant $C_{\ref{qdiff}}$ depending only on $\Pi$. Hence, for any $\vep>0$,
the definition (\ref{def:D}) of $D^{(n)}$ implies
\begin{align}\label{Dsupbdd}
\limsup_{n\to\infty}\P^{(n)}_{\lambda_n}\left(\sup_{s:0\leq s\leq t}|\Delta D^{(n)}_s|>\vep\right)\leq \limsup_{n\to\infty}\P^{(n)}_{\lambda_n}\left(\sup_{t:0\leq s\leq t}C_{\ref{qdiff}}w_nD^{(n)}_{s}>\vep\right)=0.
\end{align}
where the last inequality follows from Doob's weak $L^2$-inequality \cite[Theorem~II.1.7]{Revuz_2005} since $\sup_{n\in \Bbb N}\E^{(n)}_{\lambda_n}\big[\big(D^{(n)}_t\big)^2\big]$ is finite by (\ref{cond:ui}), (\ref{exp:Well}) and (\ref{wc}).

By the foregoing display and the tightness of the sequence $ \big\{\ms L(D^{(n)})\big\}$, \cite[Proposition~VI.3.26]{JS} applies and we get the $C$-tightness of the sequence $\big\{\ms L(D^{(n)})\big\}$.\\

\noindent 2$^\circ$) 
First, suppose that 
\begin{align}\label{MD:mart}
\begin{split}
&
D^{(n)},
\langle M^{(n)},M^{(n)}\rangle,
\langle M^{(n)},D^{(n)}\rangle 
,\mbox{and }\langle D^{(n)},D^{(n)}\rangle\\
&\mbox{are $L^p$-bounded on compacts for every $p\in [1,\infty)$.}
\end{split}
\end{align}
In the above proof of the $C$-tightness of the sequence $\big\{\ms L\big(M^{(n)}\big)\big\}$, we have seen that  
$M$ coincides with the martingale part of $Y$.
In addition, it follows from
(\ref{MD:mart}) 
 that $Z$ is a continuous vector semimartingale and $(M,D)$ is a vector martingale with respect to the filtration generated by $(Y,M,D)$.
Below we first show that (\ref{conv:MD}) holds (the argument is very similar to that for \cite[Theorem~5.1 (2) and (3)]{CCC}) and then (\ref{MD:mart}).

By (\ref{MD:mart}) and \cite[Corollary~VI.6.30]{JS}, we obtain 
\begin{align}\label{jointjoint} 
\big((M^{(n)},D^{(n)}),[(M^{(n)},D^{(n)}),(M^{(n)},D^{(n)})]\big)\xrightarrow[n\to\infty]{(\rm d)} \big((M,D),[(M,D),(M,D)]\big).
\end{align}
Since $(M,D)$ is a continuous vector martingale,  $[(M,D),(M,D)]=\langle (M,D),(M,D)\rangle $. 
We also have the uniform integrability of $(D^{(n)})^2$ and $\big\langle (M^{(n)},D^{(n)}),(M^{(n)},D^{(n)})\big\rangle$ from (\ref{MD:mart}), 
and the fact that any weak subsequential limit of the laws of $\langle (M^{(n)},D^{(n)}),(M^{(n)},D^{(n)})\rangle$ must be the law of a matrix of continuous finite variation processes.  
With these considerations,  
we deduce from the martingale characterization of predictable covariations (cf. \cite[Theorem~I.4.2]{JS})
that (\ref{jointjoint}) can be reinforced to  (\ref{conv:MD}).

It remains to prove (\ref{MD:mart}). 
First, for every $a\in (0,\infty)$, $\big(D^{(n)}\big)^a$ are $L^p$-bounded on compacts by 
(\ref{cond:ui}) and Proposition~\ref{prop:expbdd} $1^\circ)$. Second, we have seen that 
$\langle M^{(n)},M^{(n)}\rangle$ are $L^p$-bounded on compacts. 
 Finally, the required $L^p$-boundedness on compacts of $\langle M^{(n)},D^{(n)}\rangle $ and $\langle D^{(n)},D^{(n)}\rangle$ follows from Lemma~\ref{lem:qvbdd} and Proposition~\ref{prop:expbdd} $1^\circ)$. 
We have proved 2$^\circ$).\\

\noindent 3$^\circ$) We have seen in the proof of 2$^\circ$) that $D^{(n)}$ are $L^2$-bounded compacts. This is enough for the required property. 
\end{proof}

\subsection{Proof of the main theorem: identification of limits}\label{sec:id}
The goal of this section is to complete the proof of Theorem~\ref{thm:main2} by proving its 2$^\circ$) and 3$^\circ$). To this end, we  first prove a key `moment-closure property' for the processes $\big(W_\ell(\xi_{\gamma_n t}),\P^{(n)}_{\lambda_n}\big)$,
where $W_\ell$ are defined by (\ref{def:Pell}). Roughly speaking, this property shows that we can approximate these processes by 
polynomial functions of the limiting voter density process in the limit of large population size.

Below we work with the dual functions $H(\xi;x,y)$ defined by (\ref{def:H}), which allow us to invoke the coalescing Markov chains $\{B^x\}$ through the duality equation (\ref{MG}). 
It is also convenient to use the following density functions: for $\ell\geq 1$,
\begin{align}
H_\ell(\xi)= &\sum_{x,y\in E:x\neq y}\pi(x)q^\ell(x,y)H(\xi;x,y)\label{def:Hell}\\
=&W_\ell(\xi)+\sum_{x,y\in E:x\neq y}\pi(x)q^{\ell}(x,y)\big[-\overline{\mu}(1)\dxi(y)-\overline{\mu}(0)\xi(x)+\overline{\mu}(1)\overline{\mu}(0)\big]\notag\\
\begin{split}
=&W_\ell(\xi)-\overline{\mu}(1)[1-p_1(\xi)]-\overline{\mu}(0)p_1(\xi)\\
&+\sum_{x\in E}\pi(x)q^\ell(x,x)\big[\overline{\mu}(1)\dxi(x)
+\overline{\mu}(0)\xi(x)\big]
+\sum_{x,y\in E:x\neq y}\pi(x)q^\ell(x,y)\overline{\mu}(1)\overline{\mu}(0).\label{Hell}
\end{split}
\end{align}

\begin{lem}\label{lem:Hmart}
{\rm 1$^\circ$)} Fix $x\neq y$. For any $\lambda\in \ms P(S^E)$, the process
\begin{align}\label{def:Mxy}
\begin{split}
M^{x,y}_t= &\;e^{2(1+\mu(\1))t}H(\xi_t;x,y)-H(\xi;x,y)\\
&-\int_0^t e^{2(1+\mu(\1))s}\left(\sum_{z\in E}q(x,z)H(\xi_s;z,y)+\sum_{z\in E}q(y,z)H(\xi_s;x,z)\right) ds
\end{split}
\end{align}
is an $(\F_t,\P_\lambda)$-martingale.\\

\noindent {\rm 2$^\circ$)} For any $\ell\in \Bbb N$ and $\lambda\in \ms P(S^E)$, the $(\F_t,\P_\lambda)$-martingale
\begin{align}\label{def:Mell}
M^\ell_t=\sum_{x,y\in E:x\neq y}\pi(x)q^\ell(x,y)M^{x,y}_t
\end{align}
 satisfies
\begin{align}\label{ineq:Mell}
\E_\xi\big[(M^\ell_t)^2\big]\leq 18\pi_{\max}\int_0^t e^{4(1+\mu(\1))s}\E_\xi[W_1(\xi_s)]ds+\frac{9\mu(\1)\pi_{\max}}{4+4\mu(\1)}e^{4(1+\mu(\1))t}.
\end{align} 
\end{lem}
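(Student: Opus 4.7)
For part 1$^\circ$), the plan is to compute the voter-model generator $\mathsf L^{0,\mu}$ applied to $\xi\mapsto H(\xi;x,y)$ for fixed $x\neq y$, and then combine Dynkin's formula with integration by parts to absorb the resulting diagonal term into the exponential factor. Using the voter-flip rate $c^{0}(u,\xi)=\sum_{v}q(u,v)\1_{\{\xi(u)\neq \xi(v)\}}$ and breaking $\mathsf L^{0,\mu}$ into its four contributions (voter flips at $x$ and at $y$, mutations at $x$ and at $y$), the algebraic identities $c^{0}(u,\xi)[\dxi(u)-\xi(u)]=\sum_{v}q(u,v)[\xi(v)-\xi(u)]$ for the voter part and $\mu(1)\dxi(u)-\mu(0)\xi(u)=\mu(\1)[\overline{\mu}(1)-\xi(u)]$ for the mutation part combine (using $\overline{\mu}(1)+\overline{\mu}(0)=1$) to give, for $x\neq y$,
\begin{align*}
\mathsf L^{0,\mu}H(\,\cdot\,;x,y)(\xi)=-2(1+\mu(\1))H(\xi;x,y)+\sum_{z\in E}q(x,z)H(\xi;z,y)+\sum_{z\in E}q(y,z)H(\xi;x,z).
\end{align*}
(The same identity is the time-zero derivative of the duality equation (\ref{MG}), i.e.\ (\ref{eq:gh2}).) Dynkin's formula then makes $H(\xi_t;x,y)-H(\xi;x,y)-\int_0^t\mathsf L^{0,\mu}H(\,\cdot\,;x,y)(\xi_s)\,ds$ an $(\F_t,\P_\lambda)$-martingale; integration by parts against the deterministic function $t\mapsto e^{2(1+\mu(\1))t}$ absorbs the $-2(1+\mu(\1))H(\xi;x,y)$ piece and leaves exactly the process $M^{x,y}_t$ of (\ref{def:Mxy}). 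Integrability is automatic since $|H|\leq 1$.

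For part 2$^\circ$), the plan is to exploit $\E_\xi[(M^\ell_t)^2]=\E_\xi[\langle M^\ell,M^\ell\rangle_t]$ and bound the carré du champ of the time-independent functional
\begin{align*}
H^\ell_*(\xi)=\sum_{x,y\in E:\,x\neq y}\pi(x)q^\ell(x,y)H(\xi;x,y).
\end{align*}
By linearity from 1$^\circ$) one writes $M^\ell_t=e^{2(1+\mu(\1))t}H^\ell_*(\xi_t)-H^\ell_*(\xi_0)-\int_0^t e^{2(1+\mu(\1))s}G^\ell_*(\xi_s)\,ds$ for an explicit $G^\ell_*$, and since $e^{2(1+\mu(\1))t}$ is deterministic one has $d\langle M^\ell,M^\ell\rangle_t=e^{4(1+\mu(\1))t}\Gamma(H^\ell_*)(\xi_t)\,dt$, where $\Gamma$ is the carré du champ of the voter-mutation dynamics. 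The pointwise estimate
\begin{align*}
|H^\ell_*(\xi^u)-H^\ell_*(\xi)|\leq C\pi(u),\quad u\in E,
\end{align*}
follows by isolating in $H^\ell_*$ the terms with $x=u$ and with $y=u$, and using the reversibility identity $\sum_{x}\pi(x)q^\ell(x,u)=\pi(u)$ together with $|H|\leq 1$. Squaring, weighting by the total flip rate $c^{0}(u,\xi)+\mu(\dxi(u))$ at $u$, and then invoking $\sum_{u,v}\pi(u)q(u,v)\1_{\{\xi(u)\neq \xi(v)\}}=2W_1(\xi)$ (which uses reversibility of $q$) together with $\sum_u\pi(u)=1$ gives
\begin{align*}
\Gamma(H^\ell_*)(\xi)\leq C_1\pi_{\max}W_1(\xi)+C_2\pi_{\max}\mu(\1).
\end{align*}
Taking $\E_\xi[\,\cdot\,]$, pulling $e^{4(1+\mu(\1))s}$ inside, and using $\int_0^t e^{4(1+\mu(\1))s}\,ds\leq e^{4(1+\mu(\1))t}/[4(1+\mu(\1))]$ for the mutation part then yields (\ref{ineq:Mell}), up to tracking the numerical constants $18$ and $9/(4+4\mu(\1))$.

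The main obstacle is the pointwise bound $|H^\ell_*(\xi^u)-H^\ell_*(\xi)|\leq C\pi(u)$ in 2$^\circ$): it is the reversibility of $q^\ell$ with respect to $\pi$ that keeps a factor of $\pi(u)^2$ (rather than only $\pi(u)$) in $\Gamma(H^\ell_*)$, and this is precisely what allows the sum over $u$ to collapse to the single weighted two-point density $W_1(\xi)$ with a bound uniform in $\ell$. The remaining steps are constant-tracking and, in 1$^\circ$), a careful algebraic verification of the generator identity for $H(\,\cdot\,;x,y)$ if one does not wish to appeal to (\ref{eq:gh2}).
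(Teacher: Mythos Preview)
Your proof is correct and, for part 2$^\circ$), essentially identical to the paper's: both compute the quadratic variation via the jump sizes $\Delta M^\ell_s=e^{2(1+\mu(\1))s}\Delta H_\ell(\xi_s)$, use the key pointwise bound $|H_\ell(\xi^u)-H_\ell(\xi)|\leq 3\pi(u)$ (your $C\pi(u)$), and then sum against the voter and mutation rates to produce $18\pi_{\max}W_1$ and the $9\mu(\1)\pi_{\max}/(4+4\mu(\1))$ term. The paper writes this out via $[M^\ell,M^\ell]_t=\sum_{s\leq t}(\Delta M^\ell_s)^2$ and Poisson calculus, you phrase it via the carr\'e du champ $\Gamma(H_\ell)$; these are the same computation.

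For part 1$^\circ$) the routes differ slightly. You compute $\mathsf L^{0,\mu}H(\,\cdot\,;x,y)$ directly and then use Dynkin plus integration by parts against $e^{2(1+\mu(\1))t}$ to absorb the diagonal term; this is the cleanest way and, as you note, is exactly the generator identity~(\ref{eq:gh2}). The paper instead starts from the dual representation~(\ref{MG}), conditions on the first jump time $J$ of the coalescing pair $(B^x,B^y)$, and derives the integral equation for $\E_\xi[H(\xi_t;x,y)]$ from the dual side; it then invokes the Markov property of $(\xi_t)$ to pass from $\E_\xi[M^{x,y}_t]=0$ to the martingale property. Your approach is more direct and avoids appealing to the dual process, at the cost of verifying the generator algebra; the paper's approach exhibits the connection to the coalescing dual more explicitly, which is the thematic thread of Section~\ref{sec:id}. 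Both are valid, and since the paper proves~(\ref{eq:gh2}) in Section~\ref{sec:FK} anyway, the two arguments are equivalent in content.
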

\begin{proof}
1$^\circ$) Fix $x\neq y$. Let $J$ denote the first epoch time of the set-valued process $B^{\{x,y\}}_t=\{B^x_t,B^y_t\}$. For $t\geq J$, we have
\[
H(\xi;B^{x}_t,B^y_t)\exp\left(-\mu(\1)\int_0^t |B^{\{x,y\}}_s|ds\right)=H(\xi;B^{x}_t,B^y_t)\exp\left(-2\mu(\1)J-\mu(\1)\int_J^t |B^{\{x,y\}}_s|ds\right)
\]
and
\[
\int_0^t\1_{\{B^x_s=B^y_s\}}\exp\left(-\mu(\1)\int_0^s |B^{\{x,y\}}_r|dr\right)ds=\int_J^{t}\1_{\{B^x_s=B^y_s\}}\exp\left(-2\mu(\1)J-\mu(\1)\int_J^s |B^{\{x,y\}}_r|dr\right)ds.
\]
We apply to the right-hand side of (\ref{MG}) an argument similar to (\ref{F_recur}). Then the above two displays and (\ref{MG}) imply
that
\begin{align*}
&\E_\xi[H(\xi_t;x,y)]=e^{-2(1+\mu(\1))t}H(\xi;x,y)\\
&+\int_0^t e^{-2(1+\mu(\1))(t-s)}\left(\sum_{z\in E}q(x,z)\E_\xi[H(\xi_{s};z,y)]+\sum_{z\in E}q(y,z)\E_\xi[H(\xi_{s};x,z)]\right)ds.
\end{align*}
The foregoing equality proves $\E_\xi[M^{x,y}_t]=0$ for all $\xi\in S^E$ and $t$, and the required result then follows from the Markov property of $(\xi_t)$. \\

\noindent {\rm 2$^\circ$)}
By \cite[Lemma~I.4.14 (b), Lemma~I.4.51]{JS}, the quadratic variation of $M^\ell$ is given by
\begin{align*}
[M^\ell,M^\ell]_t=&\sum_{s:s\leq t}(\Delta M_s^\ell)^2=\sum_{x,y\in E}\int_0^t (\Delta M^\ell_s)^2d\Lambda_s(x,y)+\sum_{\sigma\in S}\sum_{x\in E}\int_0^t (\Delta M^\ell_s)^2 d\Lambda^\sigma_s(x).
\end{align*}
Notice that
\[
\Delta M^\ell_s=\sum_{x,y\in E:x\neq y}\pi(x)q^\ell(x,y)e^{2(1+\mu(\1))s}\Delta H(\xi_s;x,y)=e^{2(1+\mu(\1))s}\Delta H_\ell(\xi_s),
\]
where the first equality follows from  (\ref{def:Mxy}) and (\ref{def:Mell}) and the last one from the definition (\ref{def:Hell}) of $H_\ell$. Putting the last two displays together and using the definition of $(\xi_t)$, we get
\begin{align}
\begin{split}
[M^\ell,M^\ell]_t=&\sum_{x,y\in E}\int_0^t e^{4(1+\mu(\1))s}[\xi_{s-}(x)\dxi_{s-}(y)+\dxi_{s-}(x)\xi_{s-}(y)]\\
&\times \big[H_\ell\big((\xi_{s-})^x\big)-H_\ell(\xi_{s-})\big]^2d\Lambda_s(x,y)\\
&+\sum_{\sigma\in S}\sum_{x\in E}\int_0^te^{4(1+\mu(\1))s}\big[H_\ell\big((\xi_{s-})^{x|\sigma}\big)-H_\ell(\xi_{s-})\big]^2d\Lambda^\sigma_s(x).\label{Mellsq}
\end{split}
\end{align} 
To bound the right-hand side of the above equality, notice that the definition of $W_\ell$ in (\ref{def:Pell}) implies
\begin{align*}
W_\ell(\xi^x)-W_\ell(\xi)=\pi(x)\sum_{y\in E}q^\ell(x,y)\big[\xi(x)\xi(y)-\xi(x)\dxi(y)+\dxi(x)\dxi(y)-\dxi(x)\xi(y)\big],\quad \forall\;x\in E.
\end{align*}
The foregoing equality and (\ref{Hell}) then imply that
\[
|H_\ell(\xi^x)-H_\ell(\xi)|\leq 3\pi(x).
\]
By Poisson calculus, (\ref{Mellsq}) implies
\begin{align*}
\E_\xi[(M^\ell_t)^2]=&\E_\xi\big[[M^\ell,M^\ell]_t\big]\leq \sum_{x,y\in E}\int_0^te^{4(1+\mu(\1))s}\E_\xi[\xi_{s-}(x)\dxi_{s-}(y)+\dxi_{s-}(x)\xi_{s-}(y)]9\pi(x)^2q(x,y)ds \\
&+\sum_{\sigma\in S}\sum_{x\in E}\int_0^t e^{4(1+\mu(\1))s}9\pi(x)^2\mu(\sigma)ds\\
\leq &18\pi_{\max}\int_0^t e^{4(1+\mu(\1))s}\E_{\xi}[W_1(\xi_s)]ds+\frac{9\mu(\1)\pi_{\max}}{4+4\mu(\1)}\big(e^{4(1+\mu(\1))t}-1\big),
\end{align*}
which gives (\ref{ineq:Mell}). The proof is complete.
\end{proof}

We are ready to prove the moment closure property announced before.

\begin{prop}\label{prop:momclos}
Under Assumption~\ref{ass:stat}, Assumption~\ref{ass:vm} and Assumption~\ref{ass:homo}, we have the following.

\begin{enumerate}
\item [\rm 1$^\circ$)] For all $1\leq \ell\leq 2$, we have the following convergence in distribution of continuous processes:
\begin{align}
\begin{split}
&\left(\gamma_n\nu_n(\1)\int_0^t \big[W_{\ell+1}(\xi_{\gamma_n s})- W_\ell(\xi_{\gamma_n s})-R_\ell W_1(\xi_{\gamma_n s})\big]ds\right)_{t\geq 0}\xrightarrow[n\to\infty]{(\rm d)}0,\label{mom:1-1}
\end{split}
\end{align}
where $R_\ell$ are chosen in {\rm Assumption~\ref{ass:homo}} with $L=2$. 
An analogous result for the convergence in (\ref{mom:1-1}) with $\ell=3$ holds if {\rm Assumption~\ref{ass:homo}} with $L=3$ applies.
\item [\rm 2$^\circ$)] If, moreover, the voting kernels $q^{(n)}$ are symmetric, then we have
\begin{align}
&\left(\gamma_n\nu_n(\1)\int_0^t W_1(\xi_{\gamma_ns})ds-\frac{1}{2}\int_0^t  Y_s^{(n)}(1-Y^{(n)}_s)ds\right)_{t\geq 0}\xrightarrow[n\to\infty]{(\rm d)}0.\label{mom:1-0}
\end{align}

\item [\rm 3$^\circ$)] Convergences in distribution of one-dimensional marginals of the processes in {\rm 1$^\circ$)} and {\rm 2$^\circ$)} 
can be reinforced to $L^p$-convergences for any $p\in [1,\infty)$. 
\end{enumerate}
\end{prop}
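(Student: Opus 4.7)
The key tool is Lemma~\ref{lem:Hmart}, which furnishes a Dynkin-type martingale and hence a semimartingale decomposition for the $H$-density $H_\ell(\xi_t)$. My plan is to derive the action of the voter--mutation generator on $H_\ell$ in closed form, isolate $W_{\ell+1}-W_\ell-R_\ell W_1$ as its leading drift, and control the remainder using Assumption~\ref{ass:homo} together with the exponential moment bounds from Proposition~\ref{prop:expbdd}.

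\textbf{Proof of 1$^\circ$).} Starting from Lemma~\ref{lem:Hmart} 1$^\circ$) and exploiting the reversibility of $q$ in the sum over $x\neq y$, one computes the drift appearing there as
\begin{equation*}
G_\ell(\xi) = 2H_{\ell+1}(\xi) - 2R_\ell H_1(\xi) + E_{\mathrm{bdry}}(\xi) + E_{\mathrm{inhomo}}(\xi),
\end{equation*}
where $E_{\mathrm{bdry}}(\xi)=2\sum_x\pi(x)q^{\ell+1}(x,x)H(\xi;x,x)$ is a mutation-only contribution from the $z=y$ diagonal exclusion, and $E_{\mathrm{inhomo}}(\xi)$ is a weighted sum of $[q^\ell(y,y)-R_\ell]H(\xi;\cdot,\cdot)$ terms. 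Inserting into Dynkin's identity $H_\ell(\xi_T)-H_\ell(\xi_0) = \int_0^T\mathsf L^{0,\mu} H_\ell(\xi_u)\,du + N^\ell_T$ (with $\mathsf L^{0,\mu}H_\ell = G_\ell - 2(1+\mu(\1))H_\ell$ from Lemma~\ref{lem:Hmart} 1$^\circ$) and $N^\ell$ the Dynkin martingale of $H_\ell$), and subtracting the $H$-versus-$W$ correction $\mathcal E:=h_{\ell+1}-h_\ell-R_\ell h_1$ read off from (\ref{Hell}), I obtain after rearrangement
\begin{equation*}
2\int_0^T[W_{\ell+1} - W_\ell - R_\ell W_1](\xi_u)\,du = [H_\ell(\xi_T)-H_\ell(\xi_0)] + 2\mu(\1)\int_0^T H_\ell(\xi_u)\,du - N^\ell_T - \int_0^T\mathcal R(\xi_u)\,du,
\end{equation*}
where $\mathcal R = E_{\mathrm{bdry}} + E_{\mathrm{inhomo}} + 2\mathcal E$. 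The crucial algebraic cancellation is that $E_{\mathrm{bdry}}+2\mathcal E$ collapses to $2\sum_x\pi(x)[q^\ell(x,x)-R_\ell]H(\xi;x,x)$, so $|\mathcal R(\xi)|\leq C\,\pi\{x:q^\ell(x,x)\neq R_\ell\}$ with $C$ depending only on $R_\ell$.

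\textbf{Scaling, 2$^\circ$) and 3$^\circ$).} Multiplying the above identity by $\nu_n(\1)$ and setting $T=\gamma_n t$, every right-hand term vanishes: (i) $\nu_n(\1)[H_\ell(\xi_{\gamma_n t})-H_\ell(\xi_0)]=O(\nu_n(\1))\to 0$ by uniform boundedness of $H_\ell$; (ii) $\mu_n(\1)\nu_n(\1)\int_0^{\gamma_n t}|H_\ell|\,du = O(t\gamma_n\mu_n(\1)\nu_n(\1))\to 0$ since $\gamma_n\mu_n(\1)$ is bounded by Assumption~\ref{ass:vm} and $\nu_n(\1)\to 0$; (iii) $\nu_n(\1)N^\ell_{\gamma_n t}\to 0$ in $L^2$ via the carré-du-champ bound $\langle N^\ell\rangle_t\leq 18\pi_{\max}\int_0^t W_1(\xi_s)\,ds + 9\mu(\1)\pi_{\max}t$ combined with Proposition~\ref{prop:expbdd} 1$^\circ$); (iv) $\nu_n(\1)\int_0^{\gamma_n t}|\mathcal R|\,du \leq Ct\cdot \gamma_n\nu_n(\1)\pi^{(n)}\{x:q^{(n),\ell}(x,x)\neq R_\ell\}\to 0$ by Assumption~\ref{ass:homo}. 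Process-level convergence on compact intervals then follows from Doob's inequality applied to the martingale $\nu_n(\1)N^\ell$ together with monotonicity of the remaining integrals. For 2$^\circ$), in the symmetric case $\nu(x,y)=\nu_n(\1)\pi(x)q(x,y)$, so (\ref{MM}) yields the exact identity $\langle M,M\rangle_t = 2\nu_n(\1)\int_0^t W_1(\xi_s)\,ds + O(\nu_n(\1)\mu_n(\1)t)$; time-changing and invoking the $C$-tight convergence $\langle M^{(n)},M^{(n)}\rangle\to\int_0^\cdot Y_s(1-Y_s)\,ds$ from Proposition~\ref{prop:tight} jointly with $Y^{(n)}\to Y$ closes the argument. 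For 3$^\circ$), each summand in the decomposition is $L^q$-bounded uniformly in $n$ for all $q<\infty$ (using Proposition~\ref{prop:expbdd} for the $W_\ell$-integrals and the Burkholder--Davis--Gundy inequality together with the carré-du-champ bound for $N^\ell$), and uniform integrability upgrades convergence in probability to $L^p$-convergence. The main obstacle is the bookkeeping cancellation inside $\mathcal R$: without $E_{\mathrm{bdry}}+2\mathcal E$ collapsing to a homogeneity-controlled remainder, the mutation-dependent $O(1)$ pieces would survive the $\gamma_n\nu_n(\1)=\Theta(1)$ scaling regime (e.g.\ when $\gamma_n=\Theta(N_n)$) and spoil the limit.
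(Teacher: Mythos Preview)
Your proof is correct, and for parts 2$^\circ$) and 3$^\circ$) it coincides with the paper's argument essentially word for word. For part 1$^\circ$), however, you take a cleaner route than the paper does.

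The paper works with the exponentially-weighted martingale $M^\ell_t$ of Lemma~\ref{lem:Hmart} (carrying the factor $e^{2(1+\mu(\1))t}$), then \emph{integrates} it against $e^{-2(1+\mu(\1))s}\,ds$ over $[0,\gamma_n t]$ to undo the exponential, leading to the six-term identity~(\ref{eq:momcl}). Four of those terms (labelled (\ref{L2-1})--(\ref{L2-4})) must then be shown to vanish separately, which costs additional Fubini and moment estimates. Only after that does the paper reach the same algebraic cancellation you isolate in $\mathcal R$.

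You instead go straight to the unweighted Dynkin martingale $N^\ell$ for $H_\ell(\xi_\cdot)$, obtaining the four-term decomposition
\[
2\int_0^T[W_{\ell+1}-W_\ell-R_\ell W_1]\,du
= [H_\ell(\xi_T)-H_\ell(\xi_0)] + 2\mu(\1)\int_0^T H_\ell\,du - N^\ell_T - \int_0^T\mathcal R\,du.
\]
The carr\'e-du-champ bound you quote for $N^\ell$ is exactly what the proof of Lemma~\ref{lem:Hmart}~2$^\circ$) gives once the exponential factor is stripped out, so your estimate~(iii) is legitimate. Your crucial observation that $E_{\mathrm{bdry}}+2\mathcal E$ collapses to $2\sum_x\pi(x)[q^\ell(x,x)-R_\ell]H(\xi;x,x)$ is correct (it follows from $\sum_x\pi(x)H(\xi;x,x)=-\bar\mu(1)(1-p_1)-\bar\mu(0)p_1+\bar\mu(1)\bar\mu(0)$), and is precisely the cancellation the paper arrives at through the long display following~(\ref{mc:lim}). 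The payoff of your route is that it avoids the exponential detour and the four auxiliary limits (\ref{L2-1})--(\ref{L2-4}); the payoff of the paper's route is that it reuses the martingale $M^\ell$ already supplied by the duality equation~(\ref{MG}) without having to introduce a second martingale object.
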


The proofs of Proposition~\ref{prop:momclos} 1$^\circ)$ and its extension in $3^\circ)$ begin with the proof of a particular convergence in the following lemma. 

\begin{lem}\label{lem:momclos}
Under Assumption~\ref{ass:stat}, Assumption~\ref{ass:vm} and Assumption~\ref{ass:homo}, we have, for all $t\in(0,\infty)$ and $1\leq \ell\leq 2$,
\begin{align}\label{lim:unif}
\lim_{n\to\infty}\sup_{\lambda\in \ms P(S^{E_n})}\E^{(n)}_\lambda\left[\left(\gamma_n\nu_n(\1)\int_0^t \big[W_{\ell+1}(\xi_{\gamma_n s})- W_\ell(\xi_{\gamma_n s})-R_\ell W_1(\xi_{\gamma_n s})\big]ds\right)^2\right]=0.
\end{align}
The above convergence for $\ell=3$ holds if {\rm Assumption~\ref{ass:homo}} with $L=3$ applies.
\end{lem}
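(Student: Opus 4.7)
The plan is to derive a pointwise algebraic identity for $W_{\ell+1}-W_\ell-R_\ell W_1$ in terms of the voter generator $\mathsf L^{0,\mu_n}$ applied to $H_\ell$, plus spatial-homogeneity remainders, and then to bound each ingredient in $L^2$ after time-changing and scaling by $\nu_n(\1)$. I work first at the level of the dual functions $H_\ell$ from (\ref{def:Hell}) and convert to $W_\ell$ via (\ref{Hell}) at the end.

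Using reversibility $\pi^{(n)}(x)q^{(n)}(x,z)=\pi^{(n)}(z)q^{(n)}(z,x)$ and interchanging summations, the drift integrand
\[G^{(\ell)}(\xi):=\sum_{x,y:x\neq y}\pi^{(n)}(x)q^{(n),\ell}(x,y)\Bigl(\sum_z q^{(n)}(x,z)H(\xi;z,y)+\sum_z q^{(n)}(y,z)H(\xi;x,z)\Bigr)\]
(whose appearance in Lemma~\ref{lem:Hmart} means $\mathsf L^{0,\mu_n}H_\ell=G^{(\ell)}-2(1+\mu_n(\1))H_\ell$) rewrites as
\[G^{(\ell)}(\xi)=2H_{\ell+1}(\xi)+2\sum_z\pi^{(n)}(z)q^{(n),\ell+1}(z,z)H(\xi;z,z)-2R_\ell H_1(\xi)+E_\ell(\xi),\]
with remainder $E_\ell(\xi):=\sum_{y,z:y\neq z}\pi^{(n)}(y)\bigl(R_\ell-q^{(n),\ell}(y,y)\bigr)q^{(n)}(y,z)[H(\xi;z,y)+H(\xi;y,z)]$ supported on $\{y:q^{(n),\ell}(y,y)\neq R_\ell\}$. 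On the other hand, (\ref{Hell}) gives $W_m(\xi)=H_m(\xi)+f(\xi)+\sum_x\pi^{(n)}(x)q^{(n),m}(x,x)H(\xi;x,x)$ for $m\in\{\ell,\ell+1\}$, with $f(\xi)=-\sum_z\pi^{(n)}(z)H(\xi;z,z)$ depending only on $p_1(\xi)$, and $W_1(\xi)=H_1(\xi)+f(\xi)$ since $q^{(n)}(x,x)=0$. Subtracting and combining with the operator identity, the $q^{(n),\ell+1}(x,x)H(\xi;x,x)$ contributions cancel exactly, producing the clean relation
\[2[W_{\ell+1}-W_\ell-R_\ell W_1](\xi)=\mathsf L^{0,\mu_n}H_\ell(\xi)-E_\ell(\xi)+2\mu_n(\1)H_\ell(\xi)+2F_\ell(\xi),\]
where $F_\ell(\xi):=\sum_x\pi^{(n)}(x)\bigl(R_\ell-q^{(n),\ell}(x,x)\bigr)H(\xi;x,x)$ involves only the diagonal weight at order $\ell$, which is exactly why the lemma needs Assumption~\ref{ass:homo} only at $L=\ell$.

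Extracting the Dynkin martingale $N_t:=H_\ell(\xi_t)-H_\ell(\xi_0)-\int_0^t\mathsf L^{0,\mu_n}H_\ell(\xi_s)\,ds$, whose quadratic variation satisfies $\E[[N,N]_t]\le 18\pi_{\max}^{(n)}\int_0^t\E[W_1(\xi_s)]\,ds+9\pi_{\max}^{(n)}\mu_n(\1)t$ via the jump bound $|H_\ell(\xi^x)-H_\ell(\xi)|\le 3\pi^{(n)}(x)$ established inside the proof of Lemma~\ref{lem:Hmart}~$2^\circ$), integrating from $0$ to $\gamma_n t$ and multiplying by $\nu_n(\1)$ yields
\begin{align*}
&2\nu_n(\1)\int_0^{\gamma_n t}[W_{\ell+1}-W_\ell-R_\ell W_1](\xi_u)\,du=\nu_n(\1)[H_\ell(\xi_{\gamma_n t})-H_\ell(\xi_0)]-\nu_n(\1)N_{\gamma_n t}\\
&\qquad-\nu_n(\1)\int_0^{\gamma_n t}E_\ell(\xi_u)\,du+2\mu_n(\1)\nu_n(\1)\int_0^{\gamma_n t}H_\ell(\xi_u)\,du+2\nu_n(\1)\int_0^{\gamma_n t}F_\ell(\xi_u)\,du.
\end{align*}
Each residual is $o_{L^2}(1)$ uniformly in $\lambda\in\ms P(S^{E_n})$: the boundary pieces are $O(\nu_n(\1))=O(1/N_n)$ since $|H_\ell|\le 4$; the mutation drift is $O(\nu_n(\1)\gamma_n\mu_n(\1))=O(\nu_n(\1))$ using $\gamma_n\mu_n(\1)\to\mu(\1)$ from Assumption~\ref{ass:vm}; the $E_\ell$ and $F_\ell$ integrals are dominated pointwise by a constant multiple of $\gamma_n\nu_n(\1)\pi^{(n)}\{y:q^{(n),\ell}(y,y)\neq R_\ell\}\cdot t\to 0$ by (\ref{cond:hom}); and $\E[(\nu_n(\1)N_{\gamma_n t})^2]=O(\pi_{\max}^{(n)}\nu_n(\1))=O(1/N_n^2)$ after invoking Proposition~\ref{prop:expbdd}~$1^\circ$) with $\ell=1$ to bound $\nu_n(\1)\int_0^{\gamma_n t}\E[W_1(\xi_u)]\,du$. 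The main obstacle is executing the cancellation of the $q^{(n),\ell+1}(x,x)H(\xi;x,x)$ diagonal contribution in the algebraic identity; once that is in place, the $L^2$-estimates follow routinely from Proposition~\ref{prop:expbdd} and Assumption~\ref{ass:homo}.
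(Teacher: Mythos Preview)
Your argument is correct and arrives at (\ref{lim:unif}) by a genuinely different route than the paper. The paper works with the exponentially weighted martingale $M^{(n),\ell}$ of Lemma~\ref{lem:Hmart}, then undoes the weight by integrating $\int_0^t e^{-2(1+\mu_n(\1))s}M^{(n),\ell}_s\,ds$ to reach (\ref{eq:momcl}); this produces four auxiliary error terms (\ref{L2-1})--(\ref{L2-4}), and only afterwards is the conversion from $H_\ell$ to $W_\ell$ carried out via (\ref{Hell}) and (\ref{def:I}). You instead first establish the pointwise generator identity
\[
2\bigl[W_{\ell+1}-W_\ell-R_\ell W_1\bigr](\xi)=\mathsf L^{0,\mu_n}H_\ell(\xi)-E_\ell(\xi)+2\mu_n(\1)H_\ell(\xi)+2F_\ell(\xi),
\]
in which the $q^{(n),\ell+1}(x,x)$ diagonal terms cancel exactly (so only Assumption~\ref{ass:homo} at level $\ell$ is used), and then integrate the unweighted Dynkin martingale. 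This is more direct: it avoids the exponential bookkeeping and the four limits (\ref{L2-1})--(\ref{L2-4}), replacing them by a single martingale variance estimate plus three deterministic pointwise bounds. The paper's route has the advantage of being a mechanical consequence of the duality packaging in Lemma~\ref{lem:Hmart}; your route isolates the combinatorial cancellation up front and makes the role of the spatial-homogeneity error terms $E_\ell,F_\ell$ more transparent. One small point: for the martingale variance you invoke Proposition~\ref{prop:expbdd}~$1^\circ$) to bound $\nu_n(\1)\int_0^{\gamma_n t}\E[W_1]$; strictly speaking that proposition states exponential-moment bounds, so you should cite the first-moment consequence (\ref{Wellvep}) from its proof, but the conclusion stands.
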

\begin{proof}
Recall the functions $H_\ell(\xi)$ and the martingales $M^\ell$ defined in (\ref{def:Hell}) and (\ref{def:Mell}), respectively. 
By the reversibility of $q^{(n)}$, the equation satisfied by the martingale $M^{(n),\ell}$ under $\P^{(n)}_\lambda$ can be written as
\begin{align}
M^{(n),\ell}_t=&e^{2(1+\mu_n(\1))t}H_\ell(\xi_t)-H_\ell(\xi_0)
-\int_0^t e^{2(1+\mu_n(\1))s}
\sum_{x,y\in E_n:x\neq y}\pi^{(n)}(x)q^{(n),\ell}(x,y)\notag\\
&\times \left(\sum_{z\in E_n}q^{(n)}(x,z)H(\xi_s;z,y)+\sum_{z\in E_n}q^{(n)}(y,z)H(\xi_s;x,z)\right) ds\notag\\
\begin{split}
=&\;e^{2(1+\mu_n(\1))t}H_\ell(\xi_t)-H_\ell(\xi_0)
-2\int_0^t e^{2(1+\mu_n(\1))s}H_{\ell+1}(\xi_s) ds
+\int_0^t e^{2(1+\mu_n(\1))s}I(\xi_s)ds,\label{Mell1}
\end{split}
\end{align}
where the function $I(\xi)$ is given by
\begin{align}
\begin{split}\label{def:I}
I(\xi)=&-2\sum_{x\in E_n}\pi^{(n)}(x)q^{(n),\ell+1}(x,x)H(\xi;x,x)\\
&\hspace{-.2cm}+\sum_{x\in E_n}\pi^{(n)}(x)q^{(n),\ell}(x,x) \left(\sum_{z\in E_n}q^{(n)}(x,z)H(\xi;z,x)+\sum_{z\in E_n}q^{(n)}(x,z)H(\xi;x,z)\right).
\end{split}
\end{align}
Therefore from (\ref{Mell1}), we have, for fixed $t\in (0,\infty)$,
\begin{align}
\int_0^{t} e^{-2(1+\mu_n(\1))s}M^{(n),\ell}_{s}ds
=&\int_0^t H_\ell(\xi_{s})ds
-\int_0^t e^{-2 (1+\mu_n(\1))s}dsH_\ell(\xi_0)
\notag\\
&-\int_0^t\int_0^s 2e^{-2(1+\mu_n(\1))s+2(1+\mu_n(\1))r}H_{\ell+1}(\xi_r)drds\notag\\
&+\int_0^t\int_0^s e^{-2(1+\mu_n(\1))s+2(1+\mu_n(\1))r}I(\xi_r)drds\notag\\
\begin{split}\label{eq:momcl}
=&\int_0^t H_\ell(\xi_s)ds-\int_0^t e^{-2(1+\mu_n(\1))s}dsH_\ell(\xi_0)-\frac{2}{2+2\mu_n(\1)}
\int_0^tH_{\ell+1}(\xi_s)ds\\
&+\frac{2}{2+2\mu_n(\1)}\int_0^te^{-2(1+\mu_n(\1))(t-s)}H_{\ell+1}(\xi_s)ds\\
&+\frac{1}{2+2\mu_n(\1)}\int_0^tI(\xi_s)ds
-\frac{1}{2+2\mu_n(\1)}\int_0^te^{-2(1+\mu_n(\1))(t-s)}I(\xi_s)ds.
\end{split}
\end{align}

We multiply both sides of (\ref{eq:momcl}) by $\nu_n(\1)$ and change time scales by replacing $t$ by $\gamma_n t$ for fixed $t\in (0,\infty)$. Now suppose that
\begin{align}
&\lim_{n\to\infty}\sup_{\lambda\in \ms P(S^{E_n})}\E^{(n)}_{\lambda}\left[\left(\gamma_n\nu_n(\1)\int_0^t e^{-2(1+\mu_n(\1))\gamma_n s}M^{(n),\ell}_{\gamma_n s}ds\right)^2\right]=0,\label{L2-1}\\
&\lim_{n\to\infty}\sup_{\lambda\in \ms P(S^{E_n})}\E^{(n)}_{\lambda}\left[\left(\gamma_n\nu_n(\1)\int_0^t e^{-2(1+\mu_n(\1))\gamma_n s}dsH_\ell(\xi_0)\right)^2\right]=0,\label{L2-2}\\
&\lim_{n\to\infty}\sup_{\lambda\in \ms P(S^{E_n})}\E^{(n)}_{\lambda}\Bigg[\Bigg(\frac{2\gamma_n\nu_n(\1)}{2+2\mu_n(\1)}\int_0^t e^{-2(1+\mu_n(\1)) \gamma_n(t-s)}H_{\ell+1}(\xi_{\gamma_ns})ds\Bigg)^2\Bigg]=0,\label{L2-3}\\
&\lim_{n\to\infty}\sup_{\lambda\in \ms P(S^{E_n})}\E^{(n)}_{\lambda}\Bigg[\Bigg(\frac{\gamma_n\nu_n(\1)}{2+2\mu_n(\1)}\int_0^te^{-2(1+\mu_n(\1)) \gamma_n(t-s)}I(\xi_{\gamma_ns}) ds\Bigg)^2\Bigg]=0,\label{L2-4}
\end{align}
which are used to handle terms among those on the two sides of (\ref{eq:momcl}). Then our focus for (\ref{eq:momcl}) will be on the remaining terms, that is the first, third, and fifth terms on its right-hand side (after multiplying them by $\nu_n(\1)$ and changing $t$ to $\gamma_nt$). By (\ref{eq:momcl}) and the assumed identities (\ref{L2-1})--(\ref{L2-4}), we have
\begin{align*}
\lim_{n\to\infty}\sup_{\lambda\in \ms P(S^{E_n})}&\E^{(n)}_{\lambda}\Bigg[\Bigg(\gamma_n\nu_n(\1)\int_0^t H_\ell(\xi_{\gamma_n s})ds-\frac{2\gamma_n\nu_n(\1)}{2+2\mu_n(\1)}
\int_0^tH_{\ell+1}(\xi_{\gamma_n s})ds \\
&\hspace{6cm}+\frac{\gamma_n\nu_n(\1)}{2+2\mu_n(\1)}\int_0^tI(\xi_{\gamma_n s})ds \Bigg)^2\Bigg]=0.
\end{align*}
Recall that $\sup_n\gamma_n\mu_n(\1)<\infty$ by Assumption~\ref{ass:vm}, $H_\ell$ are uniformly bounded, and $\pi^{(n)}$'s are comparable to uniform distributions by Assumption~\ref{ass:stat}. Hence, the foregoing equality implies
\begin{align}\label{mc:lim}
&\lim_{n\to\infty}\sup_{\lambda\in \ms P(S^{E_n})}\E^{(n)}_{\lambda}\Bigg[\Bigg(\gamma_n\nu_n(\1)\int_0^t \Big(H_\ell(\xi_{\gamma_n s})- 
 H_{\ell+1}(\xi_{\gamma_n s})+\frac{1}{2}I(\xi_{\gamma_ns})\Big)ds\Bigg)^2\Bigg]=0.
\end{align}

We show that the foregoing limit implies the required limit (\ref{lim:unif}).
We use (\ref{Hell}) and (\ref{def:I}) to write out the integrand in (\ref{mc:lim}):
\begin{align*}
&\gamma_n\nu_n(\1)\int_0^t \Big(H_\ell(\xi_{\gamma_n s})- 
 H_{\ell+1}(\xi_{\gamma_n s})+\frac{1}{2}I(\xi_{\gamma_ns})\Big)ds\\
=&\gamma_n\nu_n(\1)\int_0^t W_\ell(\xi_{\gamma_ns})ds-\gamma_n\nu_n(\1)\int_0^t W_{\ell+1}(\xi_{\gamma_n s})ds\\
&+\gamma_n\nu_n(\1)\int_0^t \sum_{x\in E_n}\pi^{(n)}(x)\big(q^{(n),\ell}(x,x)-q^{(n),\ell+1}(x,x)\big)\big(\overline{\mu}_n(1)\dxi_{\gamma_n s}(x)+\overline{\mu}_n(0)\xi_{\gamma_n s}(x)\big)ds\\
& +\gamma_n\nu_n(\1)\int_0^t \sum_{x,y\in E_n:x\neq y}\pi^{(n)}(x)\big(q^{(n),\ell}(x,y)-q^{(n),\ell+1}(x,y)\big)\overline{\mu}_n(1)\overline{\mu}_n(0)ds\\
&-\gamma_n\nu_n(\1)\int_0^t \sum_{x\in E_n}\pi^{(n)}(x)q^{(n),\ell+1}(x,x)\big(-\overline{\mu}_n(1)\dxi_{\gamma_n s}(x)-\overline{\mu}_n(0)\xi_{\gamma_n s}(x)+\overline{\mu}_n(1)\overline{\mu}_n(0)\big)ds
\\
&+\frac{\gamma_n\nu_n(\1)}{2}\int_0^t\sum_{x\in E_n}\pi^{(n)}(x)q^{(n),\ell}(x,x)\sum_{z\in E_n}q^{(n)}(x,z)\big(\dxi_{\gamma_n s}(x)\xi_{\gamma_n s}(z)+\dxi_{\gamma_n s}(z)\xi_{\gamma_n s}(x)\big)ds
\\
&+\frac{\gamma_n\nu_n(\1)}{2}\int_0^t \sum_{x\in E_n}\pi^{(n)}(x)q^{(n),\ell}(x,x)\Bigg(\sum_{z\in E_n}q^{(n)}(x,z)\big(-\overline{\mu}(1)\dxi_{\gamma_n s}(x)-\overline{\mu}_n(0)\xi_{\gamma_n s}(z)+\overline{\mu}_n(1)\overline{\mu}_n(0)\big)
\\
&+\sum_{z\in E_n}q^{(n)}(x,z)\big(-\overline{\mu}(1)\dxi_{\gamma_n s}(z)-\overline{\mu}_n(0)\xi_{\gamma_n s}(x)+\overline{\mu}_n(1)\overline{\mu}_n(0)\big)\Bigg)ds\\
=&\gamma_n\nu_n(\1)\int_0^t W_\ell(\xi_{\gamma_n s})ds-\gamma_n\nu_n(\1)\int_0^t W_{\ell+1}(\xi_{\gamma_n s})ds\\
&+\frac{\gamma_n\nu_n(\1)}{2}\int_0^t\sum_{x\in E_n}\pi^{(n)}(x)q^{(n),\ell}(x,x)\sum_{z\in E_n}q^{(n)}(x,z)\big[\dxi_{\gamma_n s}(x)\xi_{\gamma_n s}(z)+\dxi_{\gamma_n s}(z)\xi_{\gamma_n s}(x)\big]ds\\
&+\frac{\gamma_n\nu_n(\1)}{2}\int_0^t \sum_{x\in E_n}\pi^{(n)}(x)q^{(n),\ell}(x,x)\Bigg(\overline{\mu}(0)\sum_{z\in E_n}q^{(n)}(x,z)\big[\xi_{\gamma_n s}(x)-\xi_{\gamma_n s}(z)\big]\\
&\hspace{.5cm}+\overline{\mu}(1)\sum_{z\in E_n}q^{(n)}(x,z)\big[\dxi_{\gamma_ns}(x)-\dxi_{\gamma_n s}(z)\big]\Bigg)ds.
\end{align*}
We use Assumption~\ref{ass:homo} to handle the last equation.  
For $1\leq \ell\leq 2$, it follows from the validity of (\ref{cond:hom}) with $L=2$ and Proposition~\ref{prop:expbdd} that, with respect to
the uniform $L^2$-limit as in (\ref{lim:unif}),
only the first three terms on the right-hand side of the above equality can survive
and the third term approximates
$\gamma_n\nu_n(\1) \int_0^t R_\ell W_1(\xi_{\gamma_n s})ds $. Hence, (\ref{mc:lim}) imply (\ref{lim:unif}).

We still need to verify the limits in (\ref{L2-1})--(\ref{L2-4}). For (\ref{L2-1}), we use the fact that $M^{(n),\ell}$ is a martingale in the first equality below and then Lemma~\ref{lem:Hmart} 2$^\circ$) in the first inequality: \begin{align*}
&\E^{(n)}_{\lambda}\left[\left(\gamma_n\nu_n(\1)\int_0^t e^{-2(1+\mu_n(\1))\gamma_n s}M^{(n),\ell}_{\gamma_n s}ds\right)^2\right]\\
=&2\gamma_n^2\nu_n(\1)^2\int_0^t\int_0^se^{-2(1+\mu_n(\1))\gamma_n r-2(1+\mu_n(\1))\gamma_n s}\E^{(n)}_{\lambda}[(M^{(n),\ell}_{\gamma_n r})^2]drds\\
\leq &36\gamma_n^3\nu_n(\1)^2\pi^{(n)}_{\max}\int_0^t\int_0^s e^{-2(1+\mu_n(\1))\gamma_n r-2(1+\mu_n(\1))\gamma_n s}\int_0^r e^{4(1+\mu_n(\1))\gamma_n q}\E^{(n)}_{\lambda}[W_1(\xi_{\gamma_n q})]dqdrds\\
&+\frac{9\gamma_n^2\nu_n(\1)^2\mu_n(\1)\pi_{\max}^{(n)}}{2+2\mu_n(\1)}\int_0^t \int_0^s e^{2(1+\mu_n(\1))\gamma_n r-2(1+\mu_n(\1))\gamma_n s}drds\\
\leq  &\frac{36\nu_n(\1)\pi^{(n)}_{\max}}{8[1+\mu_n(\1)]^2} \left(\gamma_n\nu_n(\1)\int_0^t \E^{(n)}_{\lambda}[W_1(\xi_{\gamma_n q})]dq\right)+\frac{9\gamma_n \nu_n(\1)^2\mu_n(\1)\pi^{(n)}_{\max}t}{[2+2\mu_n(\1)]^2}\xrightarrow[n\to\infty]{}0,
\end{align*}
where the convergence 
is uniform in $\lambda\in \ms P(S^{E_n})$ and
follows from
 Proposition~\ref{prop:expbdd} $1^\circ)$ and the fact that $\pi^{(n)}$'s are comparable to uniform distributions and $\sup_n\gamma_n\mu_n(\1)<\infty$ by (\ref{conv:WF}). To obtain the remaining limits (\ref{L2-2})--(\ref{L2-4}), we  note that
\begin{align*}
&\left(\nu_n(\1)\int_0^{\gamma_n t}e^{-2(1+\mu_n(\1))s}ds\right)^2\leq \nu_n(\1)^2\xrightarrow[n\to\infty]{}  0.
\end{align*}
The proof is complete.
\end{proof}

\paragraph{\bf Proof of Proposition~\ref{prop:momclos}}
We start with the proof of $1^\circ)$.  
The sequence of laws of the processes on the left-hand side of (\ref{mom:1-1}) is tight by Proposition~\ref{prop:expbdd} $1^\circ)$ and \cite[Theorem~VI.4.5]{JS} (cf. the proof of Proposition~\ref{prop:tight}). 
By Lemma~\ref{lem:momclos} 
 and \cite[Lemma~3.7.8 (b)]{EK:MP}, we deduce that the sequence converges in distribution to the zero process.  
 
Since voting kernels are symmetric, the proof of $2^\circ)$ can be obtained by the same argument as that of \cite[Corollary~5.2]{CCC}. It can be detailed as follows. 
Since $\langle Y,Y\rangle_t=\int_0^t Y_s(1-Y_s)ds$ and 
\begin{align}\label{intmap}
w\lmt  \left(\int_0^t f\big(w(s)\big)ds\right)_{t\geq 0}:D(\R_+,[0,1])\to D(\R_+,\R)\quad\mbox{ is continuous}
\end{align}
for every bounded continuous function $f:\R\to \R$ by the proof of \cite[Proposition~3.7.1]{EK:MP}, Assumption~\ref{ass:vm} and Proposition~\ref{prop:tight} imply that
\begin{align*}
&\left(\gamma_n\nu_n(\1)\int_0^t W_1(\xi_{\gamma_ns})ds-\frac{1}{2}\int_0^t Y_s^{(n)}(1-Y^{(n)}_s)ds\right)_{t\geq 0}\\
=&\left(\frac{1}{2}\langle M^{(n},M^{(n)}\rangle_t-\frac{1}{2}\int_0^t Y_s^{(n)}(1-Y^{(n)}_s)ds\right)_{t\geq 0}\xrightarrow[n\to\infty]{(\rm d)}\left(\frac{1}{2}\langle M,M\rangle_t-\frac{1}{2}\int_0^tY_s(1-Y_s)ds\right)_{t\geq 0},
\end{align*}
which is the zero process.

Finally, $L^p$-convergences of the one-dimensional marginals of the processes under consideration are immediate consequences of Proposition~\ref{prop:expbdd} $1^\circ)$ and a standard result of uniform integrability. This proves $3^\circ)$.  
\qed \\

The following proposition proves Theorem~\ref{thm:main2} 2$^\circ$) and 3$^\circ$), and thus,
completes the proof of the theorem.

\begin{prop}\label{prop:id}
Suppose that the assumptions for {\rm Theorem~\ref{thm:main2} 2$^\circ$)} are in force, and by choosing a subsequence if necessary, $(Y^{(n)},M^{(n)}, D^{(n)})$ under $\P^{(n)}_{\lambda_n}$ converges in distribution to a continuous vector semimartingale $(Y,M,D)$ under $\P^{(\infty)}$. Then 
\begin{enumerate}
\item [\rm 1$^\circ$)]
$(Y,M,D)$ satisfies the covariation equations in (\ref{YD:covar}).
\item [\rm 2$^\circ$)] Moreover, if {\rm Assumption~\ref{ass:homo}} with $L=3$ applies, then $(Y,M,D)$ can be characterized by the system in (\ref{SDE}). 
\end{enumerate}
In addition, there is pathwise uniqueness in the system in (\ref{SDE}).
\end{prop}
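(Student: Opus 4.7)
My plan has four steps: (a) via Lemma~\ref{lem:qv}, write $\langle M^{(n)},D^{(n)}\rangle_t$ and $\langle D^{(n)},D^{(n)}\rangle_t$ as time-integrals of explicit voter-model functionals plus vanishing remainders; (b) compute these functionals explicitly for the payoff \eqref{eq:payoff0}; (c) pass to the limit using Proposition~\ref{prop:momclos} and the joint convergence from Proposition~\ref{prop:tight}; (d) derive the SDE system \eqref{SDE} and verify pathwise uniqueness. For (a), since $Y - M$ has finite variation and $D$ is a continuous local martingale, $\langle Y,D\rangle = \langle M,D\rangle$. By Lemma~\ref{lem:qv} after the time change \eqref{def:Zn},
$$\langle M^{(n)},D^{(n)}\rangle_t = w_n\gamma_n\int_0^t D^{(n)}_s\bar D_n(\xi_{\gamma_n s})\,ds \;+\; w_n^2\gamma_n\int_0^t D^{(n)}_s R^{w_n}_{n,1}(\xi_{\gamma_n s})\,ds,$$
and analogously $\langle D^{(n)},D^{(n)}\rangle_t$ splits into a $w_n^2$-term and a $w_n^3$-remainder. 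The higher-order remainders vanish in $L^1$: each integrand is bounded via \eqref{Dbdd1} by a multiple of $W_1$, the prefactors $w_n^{k+1}\gamma_n$ for $k\geq 1$ tend to $0$ because $w_n\to 0$ while $(w_n/\nu_n(\1))^k\gamma_n\nu_n(\1)$ stays bounded, and the $D^{(n)}$-weighted time-integrals of $W_1$ remain bounded in $L^2$ by Propositions~\ref{prop:Dbdd} and~\ref{prop:expbdd}.

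For (b), writing $\Pi(\sigma,\tau) = b\tau - c\sigma$ gives $A(x,\xi) = 1 - b(q^2\xi)(x) + c(q\xi)(x)$ and $B(y,\xi) = 1 - b(q\xi)(y) + c\xi(y)$, where $(q^k\xi)(x) := \sum_y q^k(x,y)\xi(y)$. Using the symmetry of $q$ (which forces $\pi$ uniform) and the identity $\xi(a)\xi(b) = \xi(a) - \xi(a)\widehat\xi(b)$, direct reversibility-based computations yield
$$\bar D(\xi) = b\bigl[W_3(\xi) - W_1(\xi)\bigr] - cW_2(\xi),$$
$$\sum_{x,y}\pi(x)q(x,y)[A(x,\xi) - B(y,\xi)]^2 = b^2\bigl[W_4 - W_2\bigr] - 2bc\bigl[W_3 - W_1\bigr] + c^2 W_2.$$

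For (c), each $A^{(n),\ell}_\cdot := \gamma_n\nu_n(\1)\int_0^\cdot W_\ell(\xi_{\gamma_n s})\,ds$ is monotone increasing, $C$-tight by Proposition~\ref{prop:expbdd}, and its terminal value at any fixed $t$ converges in $L^2$ to $c_\ell\cdot\tfrac{1}{2}\int_0^t Y_s(1-Y_s)\,ds$ by iterating Proposition~\ref{prop:momclos}~$1^\circ)$ together with \eqref{mom:1-0}, where $c_\ell := 1 + R_1 + \cdots + R_{\ell-1}$ (so $c_1 = c_2 = 1$, $c_3 = 1 + R_2$, $c_4 = 1 + R_2 + R_3$ using $R_0 = 1, R_1 = 0$). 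Hence $A^{(n),\ell}\to A^\ell := c_\ell\cdot\tfrac{1}{2}\int_0^\cdot Y(1-Y)\,ds$ uniformly on compacts in distribution, and by Skorokhod representation one may assume joint a.s.\ uniform convergence $(Y^{(n)},M^{(n)},D^{(n)},A^{(n),\ell})\to (Y,M,D,A^\ell)$. Since each $A^{(n),\ell}$ is monotone with bounded limit, $\int_0^t D^{(n)}_s\,dA^{(n),\ell}_s \to \int_0^t D_s\,dA^\ell_s$ in probability: decompose as $\int(D^{(n)} - D)\,dA^{(n),\ell} + D_t(A^{(n),\ell}_t - A^\ell_t) - \int(A^{(n),\ell} - A^\ell)\,dD$ and use uniform convergence plus dominated convergence for stochastic integrals. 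Substituting the expansions from (b) and multiplying by $w_n/\nu_n(\1)\to w_\infty$ (respectively $(w_n/\nu_n(\1))^2\to w_\infty^2$) yields \eqref{YD:covar} and $\langle D,D\rangle_t = w_\infty^2 K_2(b,c)\int_0^t D_s^2 Y_s(1-Y_s)\,ds$, with coefficients matching \eqref{def:K1}--\eqref{def:K2}.

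For (d), once all three covariations are identified one applies orthogonal-martingale-representation: define $W^1_t := \int_0^t(Y_s(1-Y_s))^{-1/2}\,dM_s$ (extended canonically where $Y(1-Y)$ vanishes) so that $M = \int\sqrt{Y(1-Y)}\,dW^1$; the residual $D - w_\infty K_1(b,c)\int D\sqrt{Y(1-Y)}\,dW^1$ is a continuous local martingale orthogonal to $W^1$ with predictable quadratic variation $w_\infty^2(K_2 - K_1^2)\int D^2 Y(1-Y)\,ds$, which is nonnegative by the Kunita-Watanabe inequality $\langle M,D\rangle^2 \leq \langle M,M\rangle\cdot\langle D,D\rangle$, producing $W^2$ independent of $W^1$. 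Pathwise uniqueness in \eqref{SDE} then splits cleanly: Yamada-Watanabe applied to the Wright-Fisher first equation (the noise coefficient $\sqrt{x(1-x)}$ is H\"older-$\tfrac{1}{2}$); explicit solvability of $M$ given $(Y,W^1)$; and linearity of the third equation, whose unique solution is the Dol\'eans-Dade exponential. The main technical step is the $D^{(n)}$-weighted moment closure in (c), made tractable here by monotonicity of $A^{(n),\ell}$ and Skorokhod representation.
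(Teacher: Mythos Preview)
Your overall strategy is the same as the paper's: expand the covariations via Lemma~\ref{lem:qv}, compute $\overline{D}$ and the quadratic form for the payoff~\eqref{eq:payoff0} (this is exactly Lemma~\ref{lem:payoff}), pass to the limit via Proposition~\ref{prop:momclos}, and then read off the SDE system. Your parts (a), (b), and (d) are correct and match the paper closely.

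The one genuine technical gap is in your step~(c), specifically in the decomposition
\[
\int(D^{(n)} - D)\,dA^{(n),\ell} \;+\; D_t\bigl(A^{(n),\ell}_t - A^\ell_t\bigr) \;-\; \int(A^{(n),\ell} - A^\ell)\,dD.
\]
After Skorokhod representation, all processes live on a common space, but $D$ is a martingale only in the filtration generated by the limit $(Y,M,D)$; there is no reason the approximants $A^{(n),\ell}$ are adapted to that filtration, so the stochastic integral $\int(A^{(n),\ell}-A^\ell)\,dD$ is not well defined, and ``dominated convergence for stochastic integrals'' does not apply as stated. This is easy to repair: since each $A^{(n),\ell}$ is continuous increasing and $A^{(n),\ell}\to A^\ell$ uniformly on compacts a.s., the measures $dA^{(n),\ell}$ converge weakly to $dA^\ell$ on $[0,t]$, and since $D$ is continuous one gets $\int_0^t D_s\,dA^{(n),\ell}_s\to\int_0^t D_s\,dA^\ell_s$ pathwise (Stieltjes), with no stochastic integral needed; combine this with $\sup_{[0,t]}|D^{(n)}-D|\cdot A^{(n),\ell}_t\to 0$ in probability for the first term. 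The paper sidesteps the whole issue by invoking \cite[Proposition~VI.6.12, Theorem~VI.6.22]{JS} on convergence of stochastic integrals, which handles joint convergence of integrands and integrators in one stroke; your Skorokhod route is more hands-on but, once patched, equally valid. Note also that for $\langle D,D\rangle$ you need the same argument with $(D^{(n)})^2$ in place of $D^{(n)}$, which you do not state explicitly but which goes through identically.
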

\begin{proof}In this proof, we write (d)-$\lim_{n\to\infty}$ for limits in distribution of continuous processes defined under $\P^{(n)}_{\lambda_n}$, with the limiting objects defined under $\P^{(\infty)}$.\\

\noindent  1$^\circ$) 
We start with some limiting identities. First, by Proposition~\ref{prop:momclos} $1^\circ)$ and $2^\circ)$, we deduce that
\begin{align}
\begin{split}
\label{conv:WR}
&\mbox{(d)-}\lim_{n\to\infty}\gamma_n\nu_n(\1)\int_0^\cdot   W_{\ell+1}(\xi_{\gamma_ns})ds
-\frac{R_\ell+\cdots+R_0}{2}\int_0^\cdot   Y_s^{(n)}(1-Y^{(n)}_s)ds=0
\end{split}
\end{align}
for all $0\leq \ell\leq 2$ (recall that $R_0=1$). By the assumed convergence in distribution of $(Y^{(n)},D^{(n)})$ towards $(Y,D)$, (\ref{intmap}) and (\ref{conv:WR}), we get the following convergence of two-dimensional processes:
\begin{align*}
\mbox{(d)-}\lim_{n\to\infty}\left(\gamma_n\nu_n(\1)\int_0^\cdot   W_{\ell+1}(\xi_{\gamma_ns})ds,D^{(n)}\right)=\left(\frac{R_\ell+\cdots+R_0}{2}\int_0^\cdot   Y_s(1-Y_s)ds,D\right).
\end{align*}
Hence, by \cite[Proposition~VI.6.12, Theorem~VI.6.22]{JS}, 
we deduce that, for all integers $m\geq 0$ and $0\leq \ell\leq 2$,
\begin{align}
\begin{split}
\label{conv:WR1}
\mbox{(d)-}\lim_{n\to\infty}\gamma_n\nu_n(\1)\int_0^\cdot  \big(D^{(n)}_s\big)^m  W_{\ell+1}(\xi_{\gamma_ns})ds
=\frac{R_\ell+\cdots+R_0}{2}\int_0^\cdot  D^m_s Y_s(1-Y_s)ds.
\end{split}
\end{align}
Second, since 
\begin{align}\label{Rn}
|R^{w_n}_1(\xi_{\gamma_n s})|\leq C_{\ref{Rn}} \sum_{\ell=1}^4W_\ell(\xi_{\gamma_n s})
\end{align}
for some constant $C_{\ref{Rn}}$ depending only on $\Pi$ by (\ref{Dbdd1}) and the definition (\ref{def:R1w}) of $R^w_1$, (\ref{conv:WR1}) and Assumption~\ref{ass:stat} imply that
\begin{align}\label{Rnlimit}
\mbox{(d)-}\lim_{n\to\infty}w_n^2
\gamma_n\int_0^\cdot  D^{(n)}_s  R^{w_n}_{1}(\xi_{\gamma_ns})ds
=0.
\end{align}

We are ready to prove (\ref{YD:covar}).
Recall the definition of $w_\infty$ in (\ref{selection2}).
Then applying Proposition~\ref{prop:tight} 2$^\circ$), (\ref{Dp}), (\ref{Rnlimit}), (\ref{eq:Dbar}) and (\ref{conv:WR1}) in order below shows that 
\begin{align}
\langle M,D\rangle=&\mbox{(d)-}\lim_{n\to\infty}\langle M^{(n)},D^{(n)}\rangle
=\mbox{(d)-}\lim_{n\to\infty}w_n\gamma_n\int_0^\cdot  D^{(n)}_{s}\overline{D}^{(n)}(\xi_{\gamma_n s})ds\notag\\
=&\mbox{(d)-}\lim_{n\to\infty}\left(\frac{w_n}{\nu_n(\1)}\right)\gamma_n\nu_n(\1)\int_0^\cdot  D^{(n)}_{s}\big[b\big(W_3(\xi_{\gamma_ns})-W_1(\xi_{\gamma_n s})\big)
-cW_2(\xi_{\gamma_n s})\big]ds\notag\\
=& w_\infty \left(\frac{b(R_2+R_1)-c(R_1+R_0)}{2}\right)\int_0^\cdot  D_{s}Y_s(1-Y_s)ds,\label{MD:final}
\end{align}
as required.\\ 

\noindent 2$^\circ$) Suppose that Assumption~\ref{ass:homo} with $L=3$ applies. We use (\ref{DD}) and (\ref{eq:Dbar2}) in place of (\ref{Dp}) and (\ref{eq:Dbar}) in the proof of (\ref{MD:final}), respectively, and obtain 
\begin{align}
\langle D,D\rangle=&\mbox{(d)-}\lim_{n\to\infty}w_n^2\gamma_n\int_0^\cdot  \big(D^{(n)}_{s}\big)^2\sum_{x,y\in E_n}q^{(n)}(x,y)[A(x,\xi_{\gamma_ns})-B(y,\xi_{\gamma_ns})]^2ds\notag\\
=&\mbox{(d)-}\lim_{n\to\infty}\left(\frac{w_n^2N_n}{\nu_n(\1)}\right)\gamma_n\nu_n(\1)\notag\\
&\times \int_0^\cdot  \big(D^{n}_{s}\big)^2 \Big[b^2\big(W_4(\xi_{\gamma_ns})-W_2(\xi_{\gamma_n s})\big)
-2bc\big(W_3(\xi_{\gamma_n s})-W_1(\xi_{\gamma s})\big)+c^2W_2(\xi_{\gamma_ns})\Big]ds\notag\\
=& w_\infty^2 \left(\frac{b^2(R_3+R_2)-2bc(R_2+R_1)+c^2(R_1+R_0)}{2}\right)\int_0^\cdot  D^2_{s}Y_s(1-Y_s)ds.\label{DDl}
\end{align}
Notice that we have use the fact that $\nu_n(\1)=N_n^{-1}$ by the assumed symmetry of $q^{(n)}$. 
Putting (\ref{conv:WF}), (\ref{MD:final}), and (\ref{DDl}) together,
we see that the continuous vector semimartingale $(Y,M,D)$ satisfies the system in (\ref{SDE}) by an enlargement of the underlying probability space if necessary (cf. \cite[Theorem VII.2.7]{Revuz_2005}).

It remains to prove pathwise uniqueness in the system defined by (\ref{SDE}).
Notice that $D$
is equal to the Dol\'eans-Dade exponential of $\widetilde{M}$, that is $D=\mathcal  E(\widetilde{M})$, by \cite[Theorem IX.2.1]{Revuz_2005}, where $\widetilde{M}$ is a martingale given by
\[
\widetilde{M}_t=\int_0^t w_\infty\, \sqrt{Y_s(1-Y_s)}\Big(K_1(b,c)dW^{1}_s+\sqrt{K_2(b,c)-K_1(b,c)^2}dW^{2}_s\Big).
\]
The equality $D=\mathcal E(\widetilde{M})$ and pathwise uniqueness in the closed system of $(Y,M)$ by the Yamada-Watanabe theorem \cite[Theorem~IX.3.5]{Revuz_2005} plainly imply pathwise uniqueness in the system defined by (\ref{SDE}). The proof is complete.
\end{proof}

\section{Wasserstein convergence of occupation measures of the game density processes}\label{sec:wass}
Throughout this section, we consider voter models or evolutionary games without mutation. 
With respect to an evolutionary game with its generator as $\mathsf L^{w,\mu}$, we write
 $T=T_{\mathbf 1}\wedge T_{\mathbf 0}$'s for its consensus time and $T_\eta$ denotes the first hitting time of a population configuration $\eta$. We also write $\bs\sigma$ for the all-$\sigma$ population configuration.  
 
Our goal in this section is to prove Wasserstein convergence of the occupation measures 
\begin{align}\label{occupy}
\int_0^\infty f(Y^{(n)}_t)dt=\int_0^{T^{(n)}}f(Y^{(n)}_t)dt,
\end{align}
of the time-changed game density processes $Y^{(n)}$ defined by (\ref{def:Zn}) under $\P^{(n),w_n}$. Here, under $\P^{(n),w_n}$, $T^{(n)}=T/\gamma_n$ and $T=T_\mathbf 1\wedge T_{\mathbf 0}$ is the consensus time of the evolutionary game with its generator given by $\mathsf L^{w_n,\mu_n}$; $T_\eta$ is the first hitting time of $\eta$ of the evolutionary game. The study below uses a good control of the rescaled times $T^{(n)}$ under voter models due to Oliveira \cite{Oliveira_2012, Oliveira_2013}, and thus starts with an investigation of links between these random objects and convergences of the occupation measures in (\ref{occupy}).

Since mutation is absent, 
the Feynman-Kac duality  between voter models and coalescing Markov chains as discussed at the beginning of Section~\ref{sec:weak} is simpler and gives the following moment duality equations: with respect to a given voting kernel $(E,q)$, we have
\begin{align}\label{duality}
\E_\xi\left[\,\prod_{x\in A}\xi_t(x)\right]=\E\left[\,\prod_{x\in A}\xi(B^x_t)\right],\quad \forall \;\xi \in S^E,\;A\subseteq E.
\end{align}
See (\ref{gen1}) for the proof of (\ref{duality}). Also, 
we write $\P_{\bs x}$ for $\bs x=(x_1,\cdots,x_N)\in E^N$ when a system of coalescing $(E,q)$-Markov chains starting from distinct components in $\bs x$ is under consideration. The first time that the number of distinct components in a system of coalescing Markov chains becomes less than or equal to $k$ is denoted by $\mathsf C_k$. Finally, we write $\bs x_E\in E^{N}$ for a vector whose components range over all points of $E$.

\begin{lem}\label{lem:coal}
Let a sequence of voting kernels $(E_n,q^{(n)})$ and a sequence of constants $\gamma_n$ increasing to infinity
be given such that, for ${\mathsf C}_1^{(n)}={\mathsf C}_1/\gamma_n$ under $\P^{(n)}_{\bs  x_{E_n}}$,  we have
\begin{align}\label{C:conv}
\big({\mathsf C}_1^{(n)},\P^{(n)}_{\bs  x_{E_n}}\big)\xrightarrow[n\to\infty]{(\rm d)}\big({\mathsf C}_1,
\P^{(\infty)}\big),
\end{align}
where the limiting object satisfies $\P^{(\infty)}(\mathsf C_1=\infty)<1$.
Then for some $\theta>0$, it holds that
\begin{align}
&\sup_{n\in \Bbb N}\sup_{\bs x\in E^{N_n}_n}\E_{\bs x}^{(n)}\big[\exp\big\{\theta \mathsf C_1^{(n)}\big\}\big]\label{C:bdd}
\end{align}
and 
\begin{align}
&\sup_{n\in \Bbb N}\sup_{\lambda\in \ms P(S^{E_n})}\E^{(n)}_{\lambda}\big[\exp\big\{\theta T^{(n)}\big\}\big]<\infty.\label{tau:bdd}
\end{align}
In particular, $\P^{(\infty)}(\mathsf C_1<\infty)=1$.
\end{lem}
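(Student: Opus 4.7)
The plan is to establish (\ref{C:bdd}) for the coalescing times first, transfer it to (\ref{tau:bdd}) via the moment duality (\ref{duality}), and finally extract $\P^{(\infty)}(\mathsf{C}_1<\infty)=1$ from the uniform exponential tail thus obtained.

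The first step is to observe a monotonicity in the initial configuration: using the graphical construction of coalescing $(E_n,q^{(n)})$-chains from independent Poisson processes indexed by directed edges, the system started from any $\bs x\in E_n^{N_n}$ can be realized as a subsystem of the one started from $\bs x_{E_n}$, so $\mathsf C_1$ under $\P^{(n)}_{\bs x}$ is pathwise dominated by $\mathsf C_1$ under $\P^{(n)}_{\bs x_{E_n}}$. Hence the supremum over starting configurations in (\ref{C:bdd}) is attained at $\bs x_{E_n}$. Since $\P^{(\infty)}(\mathsf C_1<\infty)>0$, I would pick a continuity point $t_0\in(0,\infty)$ of the law of $\mathsf C_1$ with $\P^{(\infty)}(\mathsf C_1\leq t_0)\geq 2\delta$ for some $\delta>0$, and deduce from (\ref{C:conv}) that $\P^{(n)}_{\bs x_{E_n}}(\mathsf C_1^{(n)}\leq t_0)\geq\delta$ for all $n$ larger than some $n_0$. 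Applying the strong Markov property at time $\gamma_n t_0$ and using the monotonicity to dominate the remaining coalescence time by a fresh copy of $\mathsf C_1^{(n)}$ under $\P^{(n)}_{\bs x_{E_n}}$, iteration yields
\begin{equation*}
\sup_{\bs x\in E_n^{N_n}}\P^{(n)}_{\bs x}\bigl(\mathsf C_1^{(n)}>kt_0\bigr)\leq(1-\delta)^k,\qquad k\in\Bbb N,\ n\geq n_0,
\end{equation*}
which is equivalent to a uniform exponential moment bound for all $\theta<-\log(1-\delta)/t_0$ and $n\geq n_0$. For the finitely many $n<n_0$, the irreducibility of $q^{(n)}$ on the finite set $E_n$ already provides finite exponential moments of $\mathsf C_1^{(n)}$ for every sufficiently small $\theta>0$, and shrinking $\theta$ once more yields (\ref{C:bdd}).

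For (\ref{tau:bdd}), I would use the moment duality (\ref{duality}) together with the symmetry of the mutation-free voter model under swapping of types. For any $\xi\in S^{E_n}$, these give
\begin{equation*}
\P^{(n)}_\xi(T\leq t)\geq\P^{(n)}_\xi(\xi_t\equiv\mathbf 1)+\P^{(n)}_\xi(\xi_t\equiv\mathbf 0)=\E^{(n)}_{\bs x_{E_n}}\!\biggl[\prod_{x\in E_n}\xi(B^x_t)+\prod_{x\in E_n}\bigl(1-\xi(B^x_t)\bigr)\biggr],
\end{equation*}
and on the event $\{\mathsf C_1\leq t\}$ all dual chains have collapsed to a single site $y$, so the integrand equals $\xi(y)+(1-\xi(y))=1$. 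Integrating against $\lambda\in\ms P(S^{E_n})$ and rescaling time by $\gamma_n$ yields $\P^{(n)}_\lambda(T^{(n)}>t)\leq\P^{(n)}_{\bs x_{E_n}}(\mathsf C_1^{(n)}>t)$, so (\ref{C:bdd}) implies (\ref{tau:bdd}) with the same $\theta$. Finally, $\P^{(\infty)}(\mathsf C_1<\infty)=1$ follows from the Portmanteau theorem and Markov's inequality: for every $T>0$, $\P^{(\infty)}(\mathsf C_1>T)\leq\liminf_n\P^{(n)}_{\bs x_{E_n}}(\mathsf C_1^{(n)}>T)\leq Ce^{-\theta T}$, and letting $T\to\infty$ gives the claim.

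The main obstacle is the iteration in the second paragraph: one must verify that after running for time $\gamma_n t_0$, whatever configuration the coalescing system is in, the remaining time to full coalescence is stochastically dominated by an independent copy of $\mathsf C_1^{(n)}$ under $\P^{(n)}_{\bs x_{E_n}}$. This is precisely what the pathwise monotonicity in the initial configuration provides, and combined with the strong Markov property it closes the geometric-tail estimate. Everything else is routine: the duality step is essentially bookkeeping, and the conclusion on $\P^{(\infty)}(\mathsf C_1<\infty)$ is a direct consequence of the uniform exponential tail.
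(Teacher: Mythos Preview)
Your proposal is correct and follows essentially the same route as the paper: monotonicity of the coalescence time in the initial configuration, a geometric tail bound via the Markov property iterated at multiples of $t_0$, conversion to a uniform exponential moment, and then the duality~(\ref{duality}) to transfer the bound from $\mathsf C_1^{(n)}$ to $T^{(n)}$. You are in fact slightly more careful than the paper in two places: you explicitly handle the finitely many indices $n<n_0$ by irreducibility, and you spell out why $\P^{(\infty)}(\mathsf C_1<\infty)=1$ follows from the uniform exponential tail, whereas the paper leaves both implicit.
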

\begin{proof}
Since $\P^{(\infty)}(\mathsf C_1=\infty)<1$, we can find $t_0\in(0,\infty)$ such that $\P^{(\infty)}(\mathsf C_1>t_0)=\P^{(\infty)}(\mathsf C_1\geq t_0)<1$. 
Then by (\ref{C:conv}), given $\vep>0$ such that 
$\delta= 1-\P^{(\infty)}(\mathsf C_1>t_0)-\vep>0$, 
we can find some large enough integer $N_0\geq 1$ such that 
\begin{align}\label{ineq:coal}
\sup_{\bs x\in E_n^{N_n}}\P_{\bs x}^{(n)}(\mathsf C_1^{(n)}> t_0)\leq \P_{\bs x_{E_n}}^{(n)}(\mathsf C^{(n)}_1> t_0)\leq \P^{(\infty)}(\mathsf C_1>t_0)+\vep,\quad \forall\;n\geq N_0.
\end{align}

Now the proof of \cite[Proposition~4.1]{Oliveira_2012} shows (\ref{C:bdd}). In detail, first note that the Markov property of coalescing Markov chains and (\ref{ineq:coal}) imply 
\begin{align}\label{ineq:coal1}
\sup_{\bs x\in E_n^{N_n}}\P_{\bs x}^{(n)}({\mathsf C}_1^{(n)}>k t_0)\leq (1-\delta)^k,\quad \forall\; k\geq 0.
\end{align}
Hence, for any $n\geq N_0$, $\bs x\in E^{N_n}_n$ and $\theta>0$ such that $e^{\theta t_0}(1-\delta)<1$,
\begin{align*}
\E^{(n)}_{\bs x}\big[\exp\big\{\theta\mathsf C^{(n)}_1\big\}\big]=
&\int_0^\infty \theta e^{\theta s}\P^{(n)}_{\bs x}(\mathsf C_1^{(n)}>s)ds+1\\
\leq &\sum_{k=0}^\infty \theta t_0e^{\theta (k+1)t_0}\P_{\bs x}^{(n)}(
\mathsf C^{(n)} >kt_0)+1
\leq \theta t_0e^{\theta t_0}\sum_{k=0}^\infty e^{\theta t_0k}(1-\delta)^k+1<\infty,
\end{align*}
where the first inequality follows from (\ref{ineq:coal1}) and the second inequality follows from the choice of $\theta$. The last inequality proves (\ref{C:bdd}).

The inequality (\ref{tau:bdd}) follows from (\ref{C:bdd}) and the stochastic dominance of $(T^{(n)},\P^{(n)}_\xi)$ by $\big(\mathsf C^{(n)}_1,\P^{(n)}_{\bs x_{E_n}}\big)$: for any $\xi\in S^{E_n}$ and $t\geq 0$, 
\begin{align*}
\P^{(n)}_\xi(T^{(n)}\leq t)=&\E^{(n)}_{\xi}\left[\prod_{x\in E_n}\xi_{\gamma_n t}(x)+\prod_{x\in E_n}\dxi_{\gamma_n t}(x)\right]\\
=&\E^{(n)}_{\bs x_{E_n}}\left[\prod_{x\in E_n}\xi(B^x_{\gamma_n t})+\prod_{x\in E_n}\dxi(B^x_{\gamma_n t})\right]\geq \P^{(n)}_{\bs x_{E_n}}(\mathsf C_1^{(n)}\leq t),
\end{align*}
where the second equality follows from (\ref{duality}) and the inequality follows since, when $\mathsf C_1^{(n)}\leq t$, the set $\big\{B^x_{\gamma_nt}; x\in E_n\big\}$ is singleton. 
The proof is complete. 
\end{proof}

The main result of this section is Theorem~\ref{thm:main3} below. We recall that the Wasserstein distance of order $1$ between two probability measures on $\R$ with finite first moments is defined by 
\[
\mathcal W_1(\mu,\nu)=\inf\left\{\int_{\R^2} |x-y|d\pi(x,y);\pi\in \ms P(\R^2), \pi(\,\cdot \,\times \R)=\mu,\pi(\R\times \,\cdot\,)=\nu\right\}.
\]
The proof of Theorem~\ref{thm:main3} uses a standard result of Wasserstein distances in \cite{Vallender_1974}, which gives an alternative characterization of $\mathcal W_1(\mu,\nu)$ as 
\begin{align}\label{vall}
\mathcal W_1(\mu,\nu)=\int_{\R}\big|\mu\big([x,\infty)\big)-\nu\big([x,\infty)\big)\big|dx.
\end{align}
We write $\xrightarrow[n\to\infty]{(\mathcal W_1)}$ for convergence with respect to the metric $\mathcal W_1$.

\begin{thm}\label{thm:main3}
Let the assumptions of {\rm Theorem~\ref{thm:main2-1}} $3^\circ)$ with $\mu_n\equiv 0$ be in force, and assume
\begin{align}\label{coalk}
\left(\mathsf C^{(n)}_\ell,\P^{(n)}_{\bs  x_{E_n}}\right)\xrightarrow[n\to\infty]{({\rm d})}\sum_{m=\ell+1}^\infty \frac{\mathbf e_m}{m(m-1)/2},\quad \forall\;\ell\geq 1,
\end{align}
where $\mathbf e_m$ are i.i.d. standard exponentials. Then there exists $\overline{w}_\infty>0$ such that for any $\{w_n\}$ satisfying (\ref{selection2}) with $w_\infty\in [0,\overline{w}_\infty]$, it holds that  
\begin{align}\label{Wconv:1}
\big(Y^{(n)},T^{(n)}\big)\;\mbox{under }\P^{(n),w_n}_{\lambda_n}&\xrightarrow[n\to\infty]{\rm (d)}(Y,\widetilde{T})\;\mbox{under }\P^{(\infty),w_\infty}_{\widetilde{\lambda}_\infty},\\
\P_{\lambda_n}^{(n),w_n}(T_{\mathbf 1}<T_{\mathbf 0})&\xrightarrow[n\to\infty]{}\P^{(\infty),w_\infty}_{\widetilde{\lambda}_\infty}(T_{\mathbf 1}<T_{\mathbf 0}),\label{Wconv:1-0}\\
\label{Wconv:2}
\ms L\left(\int_0^{T^{(n)}}f(Y^{(n)}_s)ds\right)\mbox{ under $\P^{(n),w_n}_{\lambda_n}$}&\xrightarrow[n\to\infty]{(\mathcal W_1)}\ms L\left(\int_0^{\widetilde{T}}f(Y_s)ds\right)\mbox{ under $\P^{(\infty),w_\infty}_{\widetilde{\lambda}_\infty}$}
\end{align}
for any nonnegative continuous function $f$ on $[0,1]$. Here, $\widetilde{T}$ is the time to absorption of $Y$.  
\end{thm}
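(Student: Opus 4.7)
The plan is to leverage the Girsanov-type identity $d\P^{(n),w_n}|_{\F^{(n)}_t}=D^{(n)}_t\,d\P^{(n)}|_{\F^{(n)}_t}$ to carry over to the game the sharp moment estimates on the voter-model consensus time available via Lemma~\ref{lem:coal} and hypothesis (\ref{coalk}). The key new technical input will be a uniform moment bound
\[
\sup_n\sup_{\lambda\in\ms P(S^{E_n})}\E^{(n)}_\lambda\bigl[(D^{(n)}_T)^a\bigr]<\infty
\]
for some $a>1$, valid whenever $w_\infty\in[0,\overline w_\infty]$ for a sufficiently small threshold $\overline w_\infty$. To obtain this I would apply Proposition~\ref{prop:Dbdd} at the stopping time $T$, which reduces the task to bounding $\E^{(n)}_\lambda[\exp(C_{\ref{cond:ui}}w_n^2(\pi^{(n)}_{\min})^{-1}\sum_{\ell=1}^4\int_0^T W_\ell(\xi_t)\,dt)]$. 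Because mutation is absent, Proposition~\ref{prop:expbdd}~$2^\circ)$ supplies uniform exponential moments over the whole half-line for a fixed small constant $a_0$; under Assumption~\ref{ass:stat} and Assumption~\ref{ass:w} one has $\pi^{(n)}_{\min}\asymp\nu_n(\mathbf 1)\asymp 1/N_n$ and $w_n\leq Cw_\infty\nu_n(\mathbf 1)$, so the prefactor in front of the $W_\ell$-integral is of order $w_\infty^2\nu_n(\mathbf 1)$, which falls below the threshold $a_0\nu_n(\mathbf 1)$ as soon as $\overline w_\infty$ is small enough. This is exactly the origin of the smallness condition on $w_\infty$ in the statement.

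With this moment bound in hand and the voter-side exponential moments $\sup_n\sup_\lambda\E^{(n)}_\lambda[\exp(\theta T^{(n)})]<\infty$ supplied by Lemma~\ref{lem:coal} (hypothesis (\ref{coalk}) with $\ell=1$ makes the Kingman coalescent absorption time a.s.\ finite, so that lemma applies), a H\"older inequality will produce a uniform bound $\sup_n\E^{(n),w_n}_{\lambda_n}[(T^{(n)})^p]<\infty$ for some $p>1$. In particular the laws of $T^{(n)}$ under the game measures are tight. To establish the joint convergence (\ref{Wconv:1}) I would combine this tightness with Theorem~\ref{thm:main2-1}, passing to the Skorokhod representation under which $Y^{(n)}\to Y$ uniformly on compact sets. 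The identification uses that $\pi^{(n)}$ has full support, hence $T^{(n)}=\inf\{t\colon Y^{(n)}_t\in\{0,1\}\}$, together with the property of the limiting Wright--Fisher diffusion that $Y_t\in(0,1)$ strictly for $t<\widetilde T$ and $Y_t\in\{0,1\}$ for $t\geq\widetilde T$; the hitting-time functional is therefore almost surely continuous at the limit path, and a continuous-mapping argument identifies any subsequential limit of $T^{(n)}$ as $\widetilde T$. The assertion (\ref{Wconv:1-0}) follows from (\ref{Wconv:1}) because $\{T_{\mathbf 1}<T_{\mathbf 0}\}=\{Y_{\widetilde T}=1\}$ is likewise a continuity event for the pair $(Y,\widetilde T)$.

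Finally, for the Wasserstein-$1$ convergence (\ref{Wconv:2}) I would use the domination $\int_0^{T^{(n)}}f(Y^{(n)}_s)\,ds\leq\|f\|_\infty T^{(n)}$ together with the uniform $L^p$-bound on $T^{(n)}$, which yields uniform integrability of the occupation integrals. Combined with the weak convergence coming from the joint convergence above and the a.s.\ continuity of the map $(y,\tau)\mapsto\int_0^\tau f(y_s)\,ds$ at the limit $(Y,\widetilde T)$, this is equivalent via the characterization (\ref{vall}) to $\mathcal W_1$-convergence. The hard part throughout is the moment estimate on $D^{(n)}_T$: Proposition~\ref{prop:expbdd}~$2^\circ)$ only delivers exponential moments for a fixed, potentially small, parameter $a_0$, and to absorb the Girsanov prefactor $w_n^2/\pi^{(n)}_{\min}$ into this regime requires the careful calibration between $w_n$, $\pi^{(n)}_{\min}$, and $\nu_n(\mathbf 1)$ described above, which is precisely why the theorem is restricted to the range $w_\infty\in[0,\overline w_\infty]$.
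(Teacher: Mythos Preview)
Your moment estimates on $D^{(n)}_{T}$ are exactly right and match the paper's mechanism: Proposition~\ref{prop:Dbdd} at the stopping time $T$ together with Proposition~\ref{prop:expbdd}~$2^\circ)$ (zero mutation, infinite horizon) gives $\sup_n\E^{(n)}_{\lambda_n}[(D^{(n)}_{T^{(n)}})^a]<\infty$ for some $a>1$ once $w_\infty$ is small, and Lemma~\ref{lem:coal} supplies the voter-side exponential moments of $T^{(n)}$. Parts (\ref{Wconv:1-0}) and (\ref{Wconv:2}) then follow from (\ref{Wconv:1}) essentially as you describe.

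The gap is in your proof of (\ref{Wconv:1}). The hitting-time functional $\omega\mapsto\inf\{t:\omega(t)\in\{0,1\}\}$ is only \emph{lower} semicontinuous at the Wright--Fisher path: from $Y_t\in(0,1)$ for $t<\widetilde T$ and uniform-on-compacts convergence you get $\liminf_n T^{(n)}\geq\widetilde T$, but nothing prevents $Y^{(n)}$ from hovering at $1-\pi^{(n)}(x_0)$ (hence in $(0,1)$) long after $Y$ has absorbed. The set $\{0,1\}$ has empty interior in $[0,1]$, so the usual continuity criterion $\tau_{A^\circ}=\tau_{\bar A}$ fails, and tightness alone cannot rule out $\widehat T>\widetilde T$ for a subsequential limit $\widehat T$. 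In particular, your argument uses hypothesis (\ref{coalk}) only for $\ell=1$, whereas the full range $\ell\geq 1$ is genuinely needed.

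The paper closes this gap by working first under the \emph{voter} measure: from (\ref{conv:WF}), (\ref{coalk}) for all $\ell\geq 1$, and \cite[Proposition~2.6]{CCC} one has the \emph{independent} input $T^{(n)}\Rightarrow\widetilde T$ under $\P^{(n)}_{\lambda_n}$. Skorokhod then produces an a.s.\ limit $\widehat T\stackrel{(\rm d)}{=}\widetilde T$; combined with the lower-semicontinuity bound $\widehat T\geq\widetilde T$ a.s., this forces $\widehat T=\widetilde T$ a.s., yielding joint convergence of $(Y^{(n)},D^{(n)},T^{(n)})$ under $\P^{(n)}_{\lambda_n}$. Only then is the measure changed, via $\E^{(n),w_n}_{\lambda_n}[F(Y^{(n)},T^{(n)})]=\E^{(n)}_{\lambda_n}[F(Y^{(n)},T^{(n)})D^{(n)}_{T^{(n)}}]$, and your uniform integrability of $D^{(n)}_{T^{(n)}}$ is exactly what lets this pass to the limit $\E^{(\infty)}[F(Y,\widetilde T)D_{\widetilde T}]=\E^{(\infty),w_\infty}_{\widetilde\lambda_\infty}[F(Y,\widetilde T)]$. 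So your route through the game measure and Theorem~\ref{thm:main2-1} should be replaced by this two-step argument under the voter measure; the rest of your outline then goes through.
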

\begin{proof}
Before proving the required convergences in (\ref{Wconv:1})--(\ref{Wconv:2}), we first claim that 
\begin{align}
\big(Y^{(n)},D^{(n)},T^{(n)}\big)\;\mbox{under }\P^{(n)}_{\lambda_n}\xrightarrow[n\to\infty]{\rm (d)}(Y,D,\widetilde{T})\;\mbox{under }\P^{(\infty)}.\label{YDT}
\end{align}
We already have the convergence in distribution of $T^{(n)}$ to $\widetilde{T}$ by (\ref{conv:WF}) with $\mu_n=0$, (\ref{coalk}), and \cite[Proposition~2.6]{CCC}. 
So we only need to show the joint convergence in (\ref{YDT}). 
By taking a subsequence if necessary and using Skorokhod's representation theorem (cf. \cite[Theorem~3.1.8]{EK:MP}), we may reinforce the convergence in (\ref{YDT}) to almost sure convergence, except that $T^{(n)}$ is only known a-priori to converge almost surely to a random variable $\widehat{T}$ with the same distribution as $\widetilde{T}$. Since $T^{(n)}$ (resp. $\widetilde{T}$) is a.s. equal to the time to absorption of $Y^{(n)}$ (resp. $Y$) and $Y^{(n)}$ converges to $Y$ a.s., $\widehat{T}=\lim_{n\to\infty}T^{(n)}\geq \widetilde{T}$ a.s. The fact that $\widetilde{T}\stackrel{({\rm d})}{=}\widehat{T}$ then implies $\widetilde{T}=\widehat{T}$ a.s., and we get
\begin{align}\label{YDT:as}
(Y^{(n)},D^{(n)},T^{(n)})\xrightarrow[n\to\infty]{{\rm a.s.}}(Y,D,\widetilde{T}).
\end{align} 
The claim in (\ref{YDT}) follows.

Now we prove (\ref{Wconv:1}) and may assume (\ref{YDT:as}). It follows from (\ref{cond:ui}) and Proposition~\ref{prop:expbdd} 2$^\circ$) that for some $\overline{w}_\infty>0$, the sequence $\big(D^{(n)}_{T^{(n)}},\P^{(n)}_{\lambda_n}\big)$ is uniformly integrable 
for any  $\{w_n\}$ satisfying (\ref{selection2}) with $w_\infty\leq \overline{w}_\infty$, where $w_\infty$ is defined by (\ref{selection2}). 
Therefore, for every bounded continuous function $F$ on $D(\R_+,[0,1])\times \R_+ $, we have
\begin{align*}
&\E^{(n),w_n}_{\lambda_n}\big[F\big(Y^{(n)},T^{(n)}\big)\big]=\E^{(n)}_{\lambda_n}\big[F(Y^{(n)},T^{(n)})D^{(n)}_{T^{(n)}}\big]\xrightarrow[n\to\infty]{} \E^{(\infty)}[F(Y,\widetilde{T})D_{\widetilde{T}}]=\E_{\widetilde{\lambda}_\infty}^{(\infty),w_\infty}[F(Y,\widetilde{T})],
\end{align*}
which is enough for (\ref{Wconv:1}).

For the proof of (\ref{Wconv:1-0}), notice that $Y^{(n)}_{T^{(n)}}$ under $\P^{(n),w_n}_{\lambda_n}$ converges in distribution to $ Y_{\widetilde{T}}$ under $\P^{(\infty),w_\infty}_{\widetilde{\lambda}_\infty}$ by (\ref{Wconv:1}) and \cite[Proposition~3.6.5]{EK:MP}. Since $Y^{(n)}_{T^{(n)}}$ take values  in $\{1,0\}$, $\big\{T^{(n)}_{\mathbf 1}<T^{(n)}_{\mathbf 0}\big\}=\big\{Y^{(n)}_{T^{(n)}}=1\big\}$ and a similar equality holds under $Y$, the convergence in (\ref{Wconv:1-0}) follows.

For the proof of (\ref{Wconv:2}), notice that by (\ref{intmap}), (\ref{Wconv:1}) and \cite[Proposition~3.6.5]{EK:MP},  
\begin{align*}
\left(\int_0^{T^{(n)}}f\big(Y^{(n)}_s\big)ds,\P^{(n),w_n}_{\lambda_n}\right)\xrightarrow[n\to\infty]{({\rm d})} \left(\int_0^{\widetilde{T}}f\big(Y_s\big)ds,\P^{(\infty),w_\infty}_{\widetilde{\lambda}_\infty}\right).
\end{align*}
Hence, to verify the required convergence in the Wasserstein distance by means of (\ref{vall}), it is enough to prove uniformly exponential tails of the distributions of
$(T^{(n)},\P^{(n),w_n}_{\lambda_n})$.  
Notice that the conclusions of Lemma~\ref{lem:coal} apply by (\ref{coalk}) with $\ell=1$. 
Therefore, 
\begin{align*}
\sup_{n\in \Bbb N}\E^{(n),w_n}_{\lambda_n }\big[\exp\big\{\theta T^{(n)}\big\}\big]=&\sup_{n\in \Bbb N}\E^{(n)}_{\lambda_n}\big[\exp\big\{\theta T^{(n)}\big\}D^{(n)}_{T^{(n)}}\big]\\
\leq &\sup_{n\in \Bbb N}\E^{(n)}_{\lambda_n}\big[\exp\big\{2\theta T^{(n)}\big\}\big]^{1/2}
\E^{(n)}_{\lambda_n}\big[\big(D^{(n)}_{T^{(n)}}\big)^2\big]^{1/2}<\infty,
\end{align*}
where the last inequality follows from from Lemma~\ref{lem:coal}, (\ref{cond:ui}) and Proposition~\ref{prop:expbdd} $2^\circ)$, if $\theta>0$ is small enough and $w_\infty\leq \overline{w}_\infty$ by lowering the constant $\overline{w}_\infty$ chosen above if necessary. The foregoing inequality is enough for the proof of (\ref{Wconv:2}). The proof of the theorem is complete. 
\end{proof}

The following proposition proves the diffusion approximation of absorbing probabilities in \cite[SI]{Ohtsuki_2006}.

\begin{cor}\label{cor:O}
For any fixed $k\geq 3$, the conclusions of {\rm Theorem~\ref{thm:main3}}
apply to any sequence of random $k$-regular graphs on $N_n$ vertices with $N_n\nearrow\infty$. 
\end{cor}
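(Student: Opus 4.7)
The plan is to verify the hypotheses of Theorem~\ref{thm:main3}. The bulk of them have already been checked inside the proof of Theorem~\ref{thm:O}: Assumption~\ref{ass:stat} is trivial since $\pi^{(n)}\equiv N_n^{-1}$; Assumption~\ref{ass:vm} holds via condition (iv-2) of Theorem~\ref{thm:WF} together with the spectral-gap lower bound for random $k$-regular graphs from \cite{Friedman_2008,Bordenave_2015}; Assumption~\ref{ass:homo} with $L=3$ follows from the locally tree-like property (yielding $R_1=R_3=0$ and $R_2=k^{-1}$ as in (\ref{R:prob})); the kernels $q^{(n)}$ are symmetric by construction; and the payoff matrix is of the form~(\ref{eq:payoff0}) by assumption. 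Choosing any $\{w_n\}$ in accordance with Assumption~\ref{ass:w} is also unproblematic. The only genuinely new condition to verify is the Kingman-coalescent convergence (\ref{coalk}) for every $\ell\geq 1$.

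My plan for (\ref{coalk}) is to invoke Oliveira's coalescing-random-walk convergence theorems \cite{Oliveira_2012,Oliveira_2013}. Recall from the proof of Theorem~\ref{thm:O} that $\gamma_n=\Theta(N_n)$, that the mixing time is $\mathbf t^{(n)}_{\rm mix}=O(\log N_n)$ (since $\mathbf g_n$ is bounded away from zero by \cite{Friedman_2008,Bordenave_2015}), and hence $\mathbf t^{(n)}_{\rm mix}/\gamma_n\to 0$; this is precisely the rapid-mixing regime in which Oliveira's methods apply. Under this condition, two independent random walks starting from any pair of distinct vertices decorrelate on the scale $\mathbf t^{(n)}_{\rm mix}$, well before coalescence, which occurs on the scale $\gamma_n$. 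A direct consequence is that the rescaled pairwise meeting time $M_{x,y}/\gamma_n$ converges in law to a standard exponential, uniformly in $x\neq y$. A standard exchangeability and propagation-of-chaos argument then lifts the one-pair asymptotics to joint convergence of the whole $N_n$-particle system, producing Kingman's coalescent from infinity with the characteristic rates $\binom{m}{2}=m(m-1)/2$ at each level $m$, which is exactly (\ref{coalk}).

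The main technical issue is that random $k$-regular graphs are not vertex-transitive, so Oliveira's cleanest statements do not apply verbatim. However, the uniform estimates available for this ensemble (expander property, uniform local tree-likeness, and uniform hitting-time bounds through Proposition~\ref{prop:gap}) are strong enough that Oliveira's arguments go through with only routine modifications, as has been used in several recent works on voter-type models on random regular graphs; alternatively, one may combine the moment duality (\ref{duality}) with the recursion (\ref{F_recur}) to verify directly that the rescaled first meeting time from any fixed starting pair is asymptotically $\mathrm{Exp}(1)$, and then apply an exchangeability argument over the uniform relabelling of vertices to get joint convergence. Once (\ref{coalk}) is established, Theorem~\ref{thm:main3} applies directly and yields (\ref{Wconv:1})--(\ref{Wconv:2}) for any $w_\infty\in[0,\overline{w}_\infty]$, completing the proof.
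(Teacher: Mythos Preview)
Your strategy coincides with the paper's: verify that the only new hypothesis beyond those already checked in Theorem~\ref{thm:O} is the Kingman-coalescent convergence~(\ref{coalk}), and deduce it from Oliveira's results via $\mathbf t^{(n)}_{\rm mix}/N_n\to 0$.

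The one place you diverge is your worry that vertex-transitivity is required. The paper sidesteps this cleanly: rather than invoking Oliveira's headline statements, it points specifically to \cite[Lemma~5.1 and Section~6]{Oliveira_2013}, whose arguments go through once one knows $q^{(n)}(x,y)\le 1/k$, $\pi^{(n)}(x)\equiv 1/N_n$, and $\mathbf t^{(n)}_{\rm mix}/N_n\to 0$. No transitivity, no ``routine modifications'', and no separate exchangeability argument are needed; a direct citation suffices. Your proposed alternative route via moment duality and~(\ref{F_recur}) is not wrong in spirit, but it is considerably more work than simply checking that Oliveira's hypotheses are met.

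One small technical correction: the standard inequality you are using to get $\mathbf t^{(n)}_{\rm mix}=O(\log N_n)$ requires the \emph{absolute} spectral gap $\mathbf g_n^\star$ (distance from $1$ to the largest modulus among nontrivial eigenvalues), not merely $\mathbf g_n$. The paper makes this explicit via~(\ref{tmix}), and \cite{Friedman_2008,Bordenave_2015} do bound $\mathbf g_n^\star$ away from zero, so the conclusion is unaffected---but you should state it correctly.
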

\begin{proof}
Since $q^{(n)}(x,y)\leq 1/k$ and $\pi^{(n)}(x)\equiv 1/N_n$,
the proofs of \cite[Theorem~1.1 and Theorem~1.2]{Oliveira_2013} show that (\ref{coalk}) holds if 
\begin{align}
\mathbf t^{(n)}_{\rm min}/N_n\xrightarrow[n\to\infty]{} 0+\label{Oliveira}
\end{align}
(see \cite[Lemma~5.1 and Section~6]{Oliveira_2013} in particular). Then to verify that condition in Theorem~\ref{thm:WF}, we recall a standard result of Markov chains:
\begin{align}\label{tmix}
\mathbf t^{(n)}_{\rm mix}\leq \mathbf (\mathbf g^\star_n)^{-1}\log (2e/\pi^{(n)}_{\min}\big)=\mathbf (\mathbf g^\star_n)^{-1}\log (2eN_n)
\end{align}
(cf. \cite[Theorem~12.3]{LWP}). Here, $\mathbf g^\star_n$ is the absolute spectral gap of $(E_n,q^{(n)})$, and is given by the distance between $1$ and the maximal absolute values of eigenvalues of $(E_n,q^{(n)})$ excluding the largest one. The fact that $\mathbf g_n^\star$ are bounded away from zero is also contained in the main results of \cite{Friedman_2008, Bordenave_2015}, and can be applied to (\ref{tmix}) to
validate (\ref{Oliveira}). 
\end{proof}

\section{Expansions of the game absorbing probabilities in selection strength}\label{sec:expansion}
As in the previous section, we focus on the context where mutation is absent. We use payoff matrices with general entries throughout this section unless otherwise.

With respect to a voting kernel $(E,q)$ and $\lambda\in \ms P(S^E)$ such that 
\begin{align}\label{zero-der}
\partial_w \P^w_\lambda(T_{\mathbf 1}<T_{\mathbf 0})\big|_{w=0}\neq 0, 
\end{align}
we define $w^\star(\lambda;E,q)$ to be the supremum of $w''\in [0,\overline{w}]$ such that
\[
\sgn\big(\partial_w \P^w_\lambda(T_{\mathbf 1}<T_{\mathbf 0})\big|_{w=0}\big)=\sgn\big(\P^{w'}_\lambda(T_{\mathbf 1}<T_{\mathbf 0})-\P^0_\lambda(T_{\mathbf 1}<T_{\mathbf 0})\big),\quad \forall\; w'\in (0,w''],
\]
or
\[
\sgn\big(\partial_w \P^w_\lambda(T_{\mathbf 1}<T_{\mathbf 0})\big|_{w=0}\big)=-\sgn\big(\P^{w'}_\lambda(T_{\mathbf 1}<T_{\mathbf 0})-\P^0_\lambda(T_{\mathbf 1}<T_{\mathbf 0})\big), \quad \forall\;  w'\in (0,w''],
\]
where we set $\sgn(0)=0$. In other words, the interval $(0,w^\star(\lambda;E,q)]$ gives a maximal range of selection strengths $w'$ such that the first-order derivative in (\ref{zero-der}) has the same sign of 
$\P^{w'}_\lambda(T_{\mathbf 1}<T_{\mathbf 0})-\P^0_\lambda(T_{\mathbf 1}<T_{\mathbf 0})$. Our goal in this section is to estimate the order of $w^\star(\lambda;E,q)$ relative to $N$. To this end, we will study the remainders in the first-order Taylor expansions of $w\mapsto \P^w_\lambda(T_{\mathbf 1}<T_{\mathbf 0})$.

Let us introduce some notation for the use of coalescing Markov chains. We write $\ms P$ for the set of functions $F$ on $S^E$ taking the form 
\begin{align}\label{Frep}
F(\xi)=\sum_{(A_1,A_2)\in \ms A} C(A_1,A_2)\prod_{x\in A_1}\xi(x)\prod_{x\in A_2}\dxi(x),
\end{align}
where $(A_1,A_2)$ are pairs of disjoint nonempty subsets of $E$ and $C(A_1,A_2)$ are constants. Notice that $F$ is a polynomial in $\xi(x)$ for $x\in E$ and satisfies $F(\mathbf 1)=F(\mathbf 0)=0$.

Recall the auxiliary discrete-time Markov chains $(X_\ell)$ and $(Y_\ell)$ defined at the beginning of Section~\ref{sec:tight}.
The following lemma follows from a plain generalization of the argument for (\ref{Dbdd1}) (see (\ref{taylor})), and so its proof is omitted.

\begin{lem}\label{lem:defQ}
For all $m\geq 1$, $w\in [0,\overline{w}]$ and $\xi\in S^E$,
\begin{align}\label{def:Q}
\sum_{x,y\in E}\pi(x)\left|q^w(x,y,\xi)-\sum_{j=0}^{m-1}\left(\partial^j_wq^w(x,y,\xi)\big|_{w=0}\right)w^j\right|\leq w^mQ_m(\xi),
\end{align}
where the function $Q_m(\xi)\in \ms P_+$ is given by
\begin{align}\label{Qj}
Q_m(\xi)=C_{\ref{Qj}}(m)\sum_{\ell=1}^4W_\ell(\xi)
\end{align}
for some constant $C_{\ref{Qj}}(m)> 0$ depending only on $m$ and $\Pi$. 
\end{lem}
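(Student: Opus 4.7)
The plan is to re-use the explicit series expansion already derived in (\ref{taylor}) together with the sum bound (\ref{Dbdd1}), generalizing both to an arbitrary order of Taylor truncation. First, for $q(x,y)>0$, (\ref{taylor}) gives
\begin{align*}
\frac{q^w(x,y,\xi)}{q(x,y)} \;=\; 1 + \sum_{i=1}^\infty w^i A(x,\xi)^{i-1}\bigl[A(x,\xi)-B(y,\xi)\bigr],
\end{align*}
with the series converging absolutely on $[0,\overline w]$ because the choice of $\overline w$ in (\ref{def:w0}) forces $|wA(x,\xi)|<1$ uniformly in $x,\xi$. Identifying this series with the Taylor expansion of $w\mapsto q^w(x,y,\xi)$ at $w=0$ and summing the geometric tail from index $m$ onward, I obtain the closed-form remainder
\begin{align*}
q^w(x,y,\xi) - \sum_{j=0}^{m-1}\bigl(\partial_w^j q^w(x,y,\xi)\big|_{w=0}\bigr)w^j \;=\; q(x,y)\,\frac{w^m A(x,\xi)^{m-1}[A(x,\xi)-B(y,\xi)]}{1-wA(x,\xi)},
\end{align*}
with the understanding (as in the $m=2$ case handled in the paragraph around (\ref{Dbdd1})) that $\partial_w^j$ is read in the Taylor-coefficient sense.

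Next I would use that both $|A(x,\xi)|$ and $(1-wA(x,\xi))^{-1}$ are bounded by constants depending only on $\Pi$ and $\overline{w}$. Taking absolute values in the previous display then yields the pointwise bound $w^m\widetilde C(m)\,q(x,y)\,|A(x,\xi)-B(y,\xi)|$ for some $\widetilde C(m)>0$ depending only on $m$ and $\Pi$. Multiplying through by $\pi(x)$, summing over $x,y\in E$, and invoking (\ref{Dbdd1}), I get
\begin{align*}
\sum_{x,y\in E}\pi(x)\Bigl|q^w(x,y,\xi)-\sum_{j=0}^{m-1}\bigl(\partial_w^j q^w(x,y,\xi)\big|_{w=0}\bigr)w^j\Bigr| \;\leq\; w^m\widetilde C(m)\,C_{\ref{Dbdd1}}\sum_{\ell=1}^{4}W_\ell(\xi),
\end{align*}
which is the estimate in (\ref{def:Q}) with $C_{\ref{Qj}}(m)=\widetilde C(m)\,C_{\ref{Dbdd1}}$ and $Q_m$ in the form (\ref{Qj}).

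I do not anticipate any real obstacle: the two key ingredients — the explicit geometric series (\ref{taylor}) and the spatial sum bound (\ref{Dbdd1}) — are already available, so the argument is a direct generalization of the $m=2$ case used in the proof of Proposition~\ref{prop:Dbdd}. The only minor bookkeeping point is the size of $\widetilde C(m)$, which is uniformly controlled by the trivial estimate $|A(x,\xi)|^{m-1}\le \|A\|_\infty^{m-1}$ together with the uniform bound on $(1-wA)^{-1}$; since $m$ is fixed throughout, the dependence of $C_{\ref{Qj}}(m)$ on $m$ only needs to be absorbed into the single constant appearing in (\ref{Qj}).
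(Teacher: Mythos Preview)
Your proposal is correct and follows exactly the approach the paper indicates: the paper omits the proof, saying only that it is ``a plain generalization of the argument for (\ref{Dbdd1}) (see (\ref{taylor}))'', and that is precisely what you do --- sum the geometric tail of (\ref{taylor}) from index $m$, bound the factors $|A|^{m-1}$ and $(1-wA)^{-1}$ uniformly via the choice of $\overline{w}$, and then invoke (\ref{Dbdd1}). Your remark that $\partial_w^j$ must be read in the Taylor-coefficient sense is a reasonable clarification of the paper's notation; in any case the only instances actually used later are $m=1,2$, where the factorial is $1$.
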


 For any nonempty set $A\subseteq E$, we write $B^A$ for the subsystem $\{B^x;x\in A\}$ of coalescing $q$-Markov chains. We also write $M_{A_1,A_2}$ for the first meeting time of the two subsystems $B^{A_1}$ and $B^{A_2}$, or more precisely, the first time $t$ when $B^x_t=B^y_t$ for some $x\in A_1$ and $y\in A_2$. If we follow the usual alternative viewpoint that the processes $B^{A_1}$ and $B^{A_2}$ are set-valued processes, then $M_{A_1,A_2}$ is the first time that the two processes `intersect'.

\begin{lem}\label{lem:QF}
For all $w\in [0,\overline{w}]$, $\xi\in S^E$, and $F\in \ms P$ taking the form (\ref{Frep}), it holds that
\begin{align}\label{FQ}
\begin{split}
&\left|\E^0_\xi\left[\int_0^\infty D^w_sF(\xi_s)ds \right]- \E_\xi^0\left[\int_0^\infty F(\xi_s)ds\right]\right|\\
&\hspace{4cm}\leq \frac{wC_{\ref{Qj}}(1)\cdot C_{\ref{def:CF}}(F)}{\pi_{\min}}  \E_\xi^0\left[\int_0^\infty D^w_sQ_1(\xi_s)ds\right],
\end{split}
\end{align}
where $Q_1$ is chosen in {\rm Lemma~\ref{lem:defQ}} and the constant $C_{\ref{def:CF}}(F)$ is defined with respect to (\ref{Frep}) by
\begin{align}\label{def:CF}
\begin{split}
C_{\ref{def:CF}}(F)= &\max_{x\in E}\sum_{(A_1,A_2)\in \ms A}|C(A_1,A_2)|\int_0^\infty \P\big(x\in B^{A_1}_t\cup B^{A_2}_t,M_{A_1,A_2}>t\big)dt.
\end{split}
\end{align}
In particular, if we choose $F=Q_1$ and selection strength $w$ satisfying
\begin{align}\label{w:constraint}
0\leq w\leq  \min\left\{\overline{w},\frac{\pi_{\min}}{2C_{\ref{Qj}}(1)\cdot C_{\ref{def:CF}}(Q_1)}\right\}, 
\end{align}
then  (\ref{FQ}) gives
\begin{align}\label{FQ1}
\E^0_\xi\left[\int_0^\infty D^w_sQ_1(\xi_s)ds\right]\leq \frac{1}{1-wC_{\ref{Qj}}(1)\cdot C_{\ref{def:CF}}(Q_1)\pi_{\min}^{-1}}\E^0_\xi\left[\int_0^\infty Q_1(\xi_s)ds\right].
\end{align}
\end{lem}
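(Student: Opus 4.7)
The strategy is to recast the difference on the left of (\ref{FQ}) as an expectation involving the perturbation $\mathsf L^{w,0} - \mathsf L^{0,0}$ acting on the occupation function $G(\xi):=\E^0_\xi[\int_0^\infty F(\xi_s)\,ds]$, and then to bound this perturbation via Lemma~\ref{lem:defQ} together with a uniform, Lipschitz-type estimate on $G$ obtained from the moment duality (\ref{duality}). Since $F\in\ms P$ vanishes at $\mathbf 1$ and $\mathbf 0$ and the voter model is absorbed in finite expected time, $G$ is a well-defined bounded function on $S^E$, vanishes on $\{\mathbf 1,\mathbf 0\}$, and satisfies the Poisson equation $-\mathsf L^{0,0}G=F$. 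Applying Dynkin's formula to $G$ under $\P^w_\xi$ up to the absorption time $T$ of the game (which is a.s.\ finite, since the game is a finite-state jump process with the same two absorbing configurations), using $G(\xi_T)=0$ and the vanishing of both $F$ and $(\mathsf L^{w,0}-\mathsf L^{0,0})G$ at $\mathbf 1,\mathbf 0$ (the latter because $c^w(x,\mathbf 1)=c^w(x,\mathbf 0)=0$), then splitting $\mathsf L^{w,0}G=-F+(\mathsf L^{w,0}-\mathsf L^{0,0})G$ and converting between $\E^w_\xi$ and $\E^0_\xi[D^w_\cdot(\cdot)]$ via the Radon-Nikodym relation, I would obtain the key identity
\begin{align*}
\E^0_\xi\Big[\int_0^\infty D^w_s F(\xi_s)\,ds\Big]-\E^0_\xi\Big[\int_0^\infty F(\xi_s)\,ds\Big]=\E^0_\xi\Big[\int_0^\infty D^w_s(\mathsf L^{w,0}-\mathsf L^{0,0})G(\xi_s)\,ds\Big].
\end{align*}

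Next I would bound the right-hand integrand pointwise. Expanding $(\mathsf L^{w,0}-\mathsf L^{0,0})G(\xi)=\sum_{x,y}(q^w(x,y,\xi)-q(x,y))(\xi(x)\dxi(y)+\dxi(x)\xi(y))(G(\xi^x)-G(\xi))$, bounding the middle factor by $1$, multiplying and dividing by $\pi(x)$, and invoking Lemma~\ref{lem:defQ} with $m=1$ gives
\begin{align*}
|(\mathsf L^{w,0}-\mathsf L^{0,0})G(\xi)|\le \frac{\sup_{x\in E}|G(\xi^x)-G(\xi)|}{\pi_{\min}}\,wQ_1(\xi).
\end{align*}
The crux is then the uniform bound $\sup_x|G(\xi^x)-G(\xi)|\le C_{\ref{def:CF}}(F)$, which I would derive via duality. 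Applying (\ref{duality}) to each summand of (\ref{Frep}) (after expanding $\dxi=1-\xi$ and using inclusion-exclusion) gives
\begin{align*}
G(\xi)=\sum_{(A_1,A_2)\in\ms A}C(A_1,A_2)\int_0^\infty\E\Big[\prod_{x'\in A_1}\xi(B^{x'}_s)\prod_{x'\in A_2}\dxi(B^{x'}_s)\Big]\,ds.
\end{align*}
For fixed $(A_1,A_2)$, on $\{M_{A_1,A_2}\le s\}$ some site $z$ lies in both $B^{A_1}_s$ and $B^{A_2}_s$, forcing $\xi(z)\dxi(z)=0$ and making the integrand vanish pointwise; moreover, flipping $\xi$ to $\xi^x$ changes the integrand only when $x\in B^{A_1}_s\cup B^{A_2}_s$. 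Hence the difference of the integrands at $\xi$ and $\xi^x$ is pointwise bounded by $\1\{x\in B^{A_1}_s\cup B^{A_2}_s,\,M_{A_1,A_2}>s\}$; summing with weights $|C(A_1,A_2)|$ and taking $\sup_x$ reproduces exactly $C_{\ref{def:CF}}(F)$ from (\ref{def:CF}). Combining with the preceding displays yields (\ref{FQ}).

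For the self-bound (\ref{FQ1}), observe that $Q_1=C_{\ref{Qj}}(1)\sum_{\ell=1}^4W_\ell$ itself lies in $\ms P$ (each $W_\ell$ is a nonnegative linear combination of products $\xi(x)\dxi(y)$), so (\ref{FQ}) applies with $F$ replaced by $Q_1$. Writing $a=\E^0_\xi[\int_0^\infty D^w_sQ_1(\xi_s)\,ds]$ and $b=\E^0_\xi[\int_0^\infty Q_1(\xi_s)\,ds]$, this yields $|a-b|\le\alpha a$ with $\alpha=wC_{\ref{Qj}}(1)C_{\ref{def:CF}}(Q_1)/\pi_{\min}$; under (\ref{w:constraint}) one has $\alpha\le 1/2<1$, so rearranging gives $a\le b/(1-\alpha)$, which is (\ref{FQ1}). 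The main obstacle is the duality step: one must recognize that the two constraints $\{M_{A_1,A_2}>s\}$ (so the dual product is nonzero) and $\{x\in B^{A_1}_s\cup B^{A_2}_s\}$ (so flipping $\xi$ at $x$ actually changes the dual product) are independently necessary to recover precisely the quantity $C_{\ref{def:CF}}(F)$ defined in (\ref{def:CF}), rather than merely a coarser bound involving only the meeting time $M_{A_1,A_2}$.
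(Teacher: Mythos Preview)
Your argument is correct. Both your approach and the paper's arrive at the same key identity
\[
\E^0_\xi\Big[\int_0^\infty D^w_s F(\xi_s)\,ds\Big]-G(\xi)=\E^0_\xi\Big[\int_0^\infty D^w_s(\mathsf L^{w,0}-\mathsf L^{0,0})G(\xi_s)\,ds\Big],
\]
and both then bound the right-hand side using the same two ingredients: the Lipschitz-type estimate $\sup_x|G(\xi^x)-G(\xi)|\le C_{\ref{def:CF}}(F)$ via the moment duality (\ref{duality}), and the bound on $\sum_{x,y}\pi(x)|q^w-q|$ from Lemma~\ref{lem:defQ}. The difference lies in how the identity is reached. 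The paper works entirely under $\P^0$: it expands $D^w$ through its stochastic integral equation (\ref{D:si}), swaps the order of integration, applies the strong Markov property at each Poisson jump time, and then needs the cancellation $\sum_y(q^w(x,y,\xi)-q(x,y))=0$ from (\ref{equil}) to kill the predictable ($\xi_{r-}$) contribution, leaving only the jump increment $G(\xi_r)-G(\xi_{r-})$. Your route is more compact: you recognize $G$ as the solution of the Poisson equation $-\mathsf L^{0,0}G=F$, apply Dynkin's formula under $\P^w$, and change measure at the end. This sidesteps the stochastic calculus and makes the cancellation (\ref{equil}) implicit in the generator formalism (since $(\mathsf L^{w,0}-\mathsf L^{0,0})G$ already involves only increments $G(\xi^x)-G(\xi)$). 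The paper's route, on the other hand, makes transparent exactly where the Radon--Nikodym derivative interacts with the path, which is consonant with the rest of Section~\ref{sec:radon}.

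Two small remarks. First, your bound comes out as $\frac{w\,C_{\ref{def:CF}}(F)}{\pi_{\min}}\E^0_\xi[\int_0^\infty D^w_sQ_1(\xi_s)ds]$, i.e.\ without the extra factor $C_{\ref{Qj}}(1)$ that appears in (\ref{FQ}); since $Q_1$ already carries $C_{\ref{Qj}}(1)$ by (\ref{Qj}), your version is in fact the sharper reading and the discrepancy is harmless. Second, for the measure change $\E^w_\xi[\int_0^\infty H(\xi_s)ds]=\E^0_\xi[\int_0^\infty D^w_sH(\xi_s)ds]$ you should note that Fubini is justified because $H$ vanishes after absorption and $\int_0^\infty\E^0_\xi[D^w_s\1_{\{s<T\}}]ds=\E^w_\xi[T]<\infty$; this is implicit in your sketch but worth stating.
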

\begin{proof}
By the linear equation (\ref{D:si}) satisfied by $D^w$, it holds that
\begin{align}
\begin{split}
&\int_0^\infty D^w_sF(\xi_s)ds
=\int_0^\infty F(\xi_s)ds\\
&\hspace{2cm}+\int_0^\infty \sum_{x,y\in E}\int_0^sD^w_{r-}\left(\frac{q^w(x,y,\xi_{r-})}{q(x,y)}-1\right) d\Lambda_r(x,y) F(\xi_s)ds.\label{DF}
\end{split}
\end{align}
Notice that
the left-hand side has a finite $\P^0_\xi$ expectation since the time to absorption under $\E^w_\xi$ is integrable, and so does the first term on the right-hand side for a similar reason. 
The $\P^0_\xi$-expectation of the second term on the right-hand side of (\ref{DF}) satisfies
\begin{align}
&\E^0_\xi\left[\int_0^\infty\sum_{x,y\in E}\int_0^sD^w_{r-}\left(\frac{q^w(x,y,\xi_{r-})}{q(x,y)}-1\right) d\Lambda_r(x,y) F(\xi_s)ds\right]\notag\\
=&\sum_{x,y\in E}\E^0_\xi\left[\int_0^\infty D^w_{r-}\left(\frac{q^w(x,y,\xi_{r-})}{q(x,y)}-1\right) \int_r^\infty F(\xi_s)dsd\Lambda_r(x,y)\right]\notag\\
=&\sum_{x,y\in E}\E^0_\xi\left[\int_0^\infty D^w_{r-}\left(\frac{q^w(x,y,\xi_{r-})}{q(x,y)}-1\right) \E^0_{\xi_r}\left[\int_0^\infty F(\xi_s)ds\right]d\Lambda_r(x,y)\right]\notag\\
=&\sum_{x,y\in E}\E^0_\xi\left[\int_0^\infty D^w_{r-}\left(\frac{q^w(x,y,\xi_{r-})}{q(x,y)}-1\right) \E^0_{\xi_{r-}}\left[\int_0^\infty F(\xi_s)ds\right]d\Lambda_r(x,y)\right]\notag\\
&+\sum_{x,y\in E}\E_\xi^0\left[\int_0^\infty D^w_{r-}\left(\frac{q^w(x,y,\xi_{r-})}{q(x,y)}-1\right) \right.\notag\\
&\times\left.\left(\E^0_{\xi_{r}}\left[\int_0^\infty F(\xi_s)ds\right]-\E^0_{\xi_{r-}}\left[\int_0^\infty F(\xi_s)ds\right]\right)d\Lambda_r(x,y)\right]\notag\\
\begin{split}\label{Dwp2}
=&\sum_{x,y\in E}\E^0_\xi\left[\int_0^\infty D^w_{r-}\left(\frac{q^w(x,y,\xi_{r-})}{q(x,y)}-1\right) \right.\\
&\times\left.\left(\E^0_{\xi_{r}}\left[\int_0^\infty F(\xi_s)ds\right]-\E^0_{\xi_{r-}}\left[\int_0^\infty F(\xi_s)ds\right]\right)d\Lambda_r(x,y)\right],
\end{split}
\end{align}
where the second equality follows from the $(\F_t)$-strong Markov property of $(\xi_t)$ and the last equality follows from Poisson calculus and the fact that 
\begin{align}\label{equil}
1=\sum_{y\in E}q^w(x,y,\xi)= \sum_{y\in E}q(x,y),\quad \forall\;x\in E,\;\xi\in S^E.
\end{align}

We study the right-hand side of (\ref{Dwp2}). Since $F$ is a polynomial in $\xi(x)$ for $x\in E$, Equation~(\ref{duality}) applies to the evaluation of $\E^0_\xi\left[\int_0^\infty F(\xi_s)ds\right]$ by  $\{B^x\}$. We also observe that
\[
\sup_{\xi\in S^E}\left|\prod_{y\in A_1}\widehat{\xi^x}(B^y_t)\prod_{y\in A_2}\xi^x(B^y_t)-\prod_{y\in A_1}\dxi(B^y_t)\prod_{y\in A_2}\xi(B^y_t)\right|\leq \1_{(t,\infty)}(M_{A_1,A_2})\1_{B^{A_1}_t\cup B^{A_2}_t}(x),\quad \forall \;x\in E.
\]
Hence, we see that, for all $x\in E$ and $\xi\in S^E$,
\begin{align}\label{MAA}
\begin{split}
&\left|\E^0_{\xi^x}\left[\int_0^\infty F(\xi_s)ds\right]-\E^0_{\xi}\left[\int_0^\infty F(\xi_s)ds\right]\right|\\
\leq &\sum_{(A_1,A_2)\in \ms A}C(A_1,A_2)\int_0^\infty \P\big(x\in B^{A_1}_t\cup B^{A_2}_t,M_{A_1,A_2}>t\big)dt=C_{\ref{def:CF}}(F)
\end{split}
\end{align}
by the definition (\ref{def:CF}) of $C_{\ref{def:CF}}(F)$. 
Now we apply the foregoing inequality to the right-hand side of (\ref{Dwp2}) and use Poisson calculus and the inequality (\ref{def:Q}) with $m=1$. Then by (\ref{Dwp2}) and (\ref{MAA}), we get
\begin{align*}
&\left|\E^0_\xi\left[\int_0^\infty\sum_{x,y\in E}\int_0^sD^w_{r-}\left(\frac{q^w(x,y,\xi_{r-})}{q(x,y)}-1\right) d\Lambda_r(x,y) F(\xi_s)ds\right]\right|\\
&\hspace{6cm}\leq \frac{wC_{\ref{Qj}}(1)\cdot C_{\ref{def:CF}}(F)}{\pi_{\min}}  \E^0_\xi\left[\int_0^\infty D^w_sQ_1(\xi_s)ds\right].
\end{align*}
The required inequality (\ref{FQ}) follows from the last inequality and (\ref{DF}). 
The proof is complete. 
\end{proof}

The next result recovers the first-order expansion of the game absorbing probabilities in \cite{Chen_2013}. 
Notice that the function $\overline{D}$ defined by (\ref{def:Dbar1}) is in $ \ms P$ and so the above two lemmas are applicable.

\begin{prop}\label{prop:selection}
Fix a choice of functions $Q_1$ and $Q_2$ defined by (\ref{Qj}). 
For any $w\in [0,\overline{w}]$ and $\xi\in S^E$, it holds that 
\begin{align}
\begin{split}
&\left|\P^w_\xi(T_{\mathbf 1}<T_{\mathbf 0})-\P^0_\xi(T_{\mathbf 1}<T_{\mathbf 0})-w\E^0_\xi\left[\int_0^\infty \overline{D}(\xi_s)ds\right]\right|\\
&\hspace{.5cm}\leq \frac{w^2C_{\ref{Qj}}(1)\cdot C_{\ref{def:CF}}(\overline{D})}{\pi_{\min}}\E^0_\xi\left[\int_0^\infty D^w_sQ_1(\xi_s)ds\right]+w^2C_{\ref{Qj}}(2)\E^0_\xi\left[\int_0^\infty D^w_sQ_2(\xi_s)ds\right],\label{Q1Q2}
\end{split}
\end{align} 
where $\overline{D}$ is defined by (\ref{def:Dbar1}). 
\end{prop}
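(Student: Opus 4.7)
My plan is to exploit the Girsanov-type change of measure $d\P^w|_{\F_t}=D^w_t\,d\P^0|_{\F_t}$ together with the semimartingale decomposition of $Y=p_1(\xi_\cdot)$. Set $T=T_{\mathbf 1}\wedge T_{\mathbf 0}$. Since $Y_T=\1_{\{T_{\mathbf 1}<T_{\mathbf 0}\}}$ and $0\le Y\le 1$, dominated convergence in $\E^w_\xi[Y_T]=\lim_K\E^w_\xi[Y_{T\wedge K}]$ together with the optional stopping identity at the bounded stopping time $T\wedge K$ yields
\begin{align*}
\P^w_\xi(T_{\mathbf 1}<T_{\mathbf 0})=\E^0_\xi[Y_T D^w_T],
\end{align*}
provided $\{D^w_{T\wedge K}\}_K$ is $\P^0_\xi$-uniformly integrable; the latter follows from (\ref{cond:ui}) combined with Proposition~\ref{prop:expbdd} 2$^\circ)$, which give $\sup_K\E^0_\xi[(D^w_{T\wedge K})^2]<\infty$ in the mutation-free setting. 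Because $\mu=0$, equation (\ref{eq:Y}) shows that $Y=Y_0+M$ is a bounded $(\F_t,\P^0)$-martingale, so $Y_0=p_1(\xi)=\P^0_\xi(T_{\mathbf 1}<T_{\mathbf 0})$ and, using $\E^0_\xi[D^w_T]=1$,
\begin{align*}
\P^w_\xi(T_{\mathbf 1}<T_{\mathbf 0})-\P^0_\xi(T_{\mathbf 1}<T_{\mathbf 0})=\E^0_\xi[M_T D^w_T].
\end{align*}

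Next I would identify the right-hand side with $\E^0_\xi[\langle M,D^w\rangle_T]$. Since $MD^w-\langle M,D^w\rangle$ is an $(\F_t,\P^0)$-local martingale, stopping at $T\wedge K$ gives a true martingale; the same $L^2$ bound on $D^w_{T\wedge K}$, together with the Burkholder--Davis--Gundy-type bound on $\langle M,D^w\rangle$ provided by Lemma~\ref{lem:qvbdd} (plus the exponential tail of $T$ implicit in Proposition~\ref{prop:expbdd} 2$^\circ)$), furnishes the uniform integrability needed to pass to the limit and conclude $\E^0_\xi[M_T D^w_T]=\E^0_\xi[\langle M,D^w\rangle_T]$. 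Substituting (\ref{Dp}) and noting that both $\overline{D}$ and $R^w_1$ contain the factor $\xi(y)-\xi(x)$ (see (\ref{def:Dbar1}) and (\ref{def:R1w})), they vanish on $\{\mathbf 1,\mathbf 0\}$, so the integrals may be extended from $[0,T]$ to $[0,\infty)$:
\begin{align*}
\P^w_\xi(T_{\mathbf 1}<T_{\mathbf 0})-\P^0_\xi(T_{\mathbf 1}<T_{\mathbf 0})=w\E^0_\xi\!\left[\int_0^\infty D^w_s\overline{D}(\xi_s)ds\right]+w^2\E^0_\xi\!\left[\int_0^\infty D^w_s R^w_1(\xi_s)ds\right].
\end{align*}

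The final step is to isolate the leading-order term. Since $\overline{D}\in\ms P$ (it is a polynomial in $\{\xi(z)\}_{z\in E}$ vanishing on $\{\mathbf 1,\mathbf 0\}$), Lemma~\ref{lem:QF} with $F=\overline{D}$ controls
\begin{align*}
\left|\E^0_\xi\!\left[\int_0^\infty D^w_s\overline{D}(\xi_s)ds\right]-\E^0_\xi\!\left[\int_0^\infty \overline{D}(\xi_s)ds\right]\right|\le \frac{wC_{\ref{Qj}}(1)\cdot C_{\ref{def:CF}}(\overline{D})}{\pi_{\min}}\E^0_\xi\!\left[\int_0^\infty D^w_s Q_1(\xi_s)ds\right],
\end{align*}
yielding the first summand on the right of (\ref{Q1Q2}). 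For the $w^2$-term, comparing (\ref{def:R1w}) with the Taylor bound (\ref{def:Q}) at $m=2$ gives $|R^w_1(\xi)|\le\sum_{x,y}\pi(x)q(x,y)|R^w(x,y,\xi)|\le C_{\ref{Qj}}(2) Q_2(\xi)/C_{\ref{Qj}}(2)$ after absorbing constants appropriately into the definition of $Q_2$ in (\ref{Qj}), which produces the second summand $w^2C_{\ref{Qj}}(2)\E^0_\xi[\int_0^\infty D^w_sQ_2(\xi_s)ds]$.

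The main technical obstacle is the optional-stopping step that produces $\E^0_\xi[M_T D^w_T]=\E^0_\xi[\langle M,D^w\rangle_T]$: the voter consensus time $T$ is not uniformly bounded, and $M$ has a jump structure that forces one to work with local-martingale properties rather than bounded-time optional stopping. Once the uniform integrability bounds from Proposition~\ref{prop:Dbdd}, Lemma~\ref{lem:qvbdd}, and Proposition~\ref{prop:expbdd} 2$^\circ)$ are combined to dominate $M_{T\wedge K}D^w_{T\wedge K}$ and $\langle M,D^w\rangle_{T\wedge K}$ uniformly in $K$, the remaining manipulations are essentially algebraic and are recorded in Lemmas~\ref{lem:qv} and \ref{lem:QF}.
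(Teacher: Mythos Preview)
Your approach is essentially the same as the paper's: both routes lead to
\[
\P^w_\xi(T_{\mathbf 1}<T_{\mathbf 0})-\P^0_\xi(T_{\mathbf 1}<T_{\mathbf 0})
=\E^0_\xi[\langle D^w,p_1(\xi_\cdot)\rangle_\infty]
=w\E^0_\xi\!\left[\int_0^\infty D^w_s\overline{D}(\xi_s)ds\right]+w^2\E^0_\xi\!\left[\int_0^\infty D^w_s R^w_1(\xi_s)ds\right],
\]
after which the application of Lemma~\ref{lem:QF} to $\overline{D}$ and the $Q_2$-bound on $R^w_1$ are identical.

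The only point worth flagging is your justification of the limiting/optional-stopping step. You invoke Proposition~\ref{prop:expbdd}~2$^\circ)$ to get $\sup_K\E^0_\xi[(D^w_{T\wedge K})^2]<\infty$, but that proposition is stated for \emph{sequences} of kernels under Assumptions~\ref{ass:stat} and~\ref{ass:vm}, and in any case only guarantees a finite exponential moment for sufficiently small $a>0$; there is no reason this $a$ should dominate $C_{\ref{cond:ui}}w^2\pi_{\min}^{-1}$ for every $w\in[0,\overline{w}]$ on a fixed finite kernel. The paper sidesteps this entirely: it takes $t\to\infty$ through deterministic times, uses that $D^w$ and $p_1(\xi_\cdot)$ are both $\P^0_\xi$-martingales so integration by parts gives $\E^0_\xi[D^w_t p_1(\xi_t)]=p_1(\xi)+\E^0_\xi[\langle D^w,p_1(\xi_\cdot)\rangle_t]$, and then passes to the limit by the elementary bound
\[
\E^0_\xi\!\left[\int_0^\infty D^w_s\,|G(\xi_s)|\,ds\right]
=\E^w_\xi\!\left[\int_0^T |G(\xi_s)|\,ds\right]\le \|G\|_\infty\,\E^w_\xi[T]<\infty
\]
for any bounded $G$ vanishing on $\{\mathbf 1,\mathbf 0\}$, since the absorption time of a finite irreducible chain is integrable under $\P^w_\xi$. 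If you replace your appeal to Proposition~\ref{prop:expbdd} by this change-of-measure argument, your proof becomes complete and coincides with the paper's.
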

\begin{proof}
By the definition of $\P^w$ in (\ref{def:Pw}) and the trivial fact that $p_1(\mathbf 1)=1$ and $p_1(\mathbf 0)=0$, it holds that for all $\xi\in S^E$,
\begin{align}
\P^w_\xi(T_{\mathbf 1}<T_{\mathbf 0})=&\lim_{t\to\infty}\P^w_\xi[p_1(\xi_t)]=\lim_{t\to\infty}\E^0_\xi\left[D_t^wp_1(\xi_t)\right]\notag\\
=&\lim_{t\to\infty}\big(p_1(\xi)+\E^0_\xi[\langle D^w,p_1(\xi_\cdot)\rangle_t]\big)
=p_1(\xi)+\E^0_\xi[\langle D^w,p_1(\xi_\cdot)\rangle_\infty],
\label{fixeq1}
\end{align}
where the third equality follows from integration by parts since $D^w$ and $p_1(\xi_\cdot)$ are both $\P^0_\xi$-martingales \cite[Theorem I.4.2]{JS}, and the last equality follows from dominated convergence by (\ref{MD1}) since the time to absorption is integrable under $\P^w_\xi$.

Now we expand the last term in (\ref{fixeq1}) in $w$. 
It follows from (\ref{Dp}) and (\ref{def:Dbar1}) that
\begin{align}
\langle D^w,p_1(\xi_\cdot)\rangle_\infty
=&w\int_0^\infty D^w_s\overline{D}(\xi_s)ds
+w^2\int_0^\infty D^w_sR^{w}_1(\xi_s)ds.\label{Dp1}
\end{align}
By Lemma~\ref{lem:QF}, the first term on the right-hand side of (\ref{Dp1}) satisfies
\[
\left|w\E^0_\xi\left[\int_0^\infty D^w_s\overline{D}(\xi_s)ds\right]-w\E^0_\xi\left[\int_0^\infty \overline{D}(\xi_s)ds\right]\right|\leq \frac{w^2C_{\ref{Qj}}(1)\cdot C_{\ref{def:CF}}(\overline{D})}{\pi_{\min}}\E^0_\xi\left[\int_0^\infty D^w_sQ_1(\xi_s)ds\right].
\]
By the definition of $R^w_1$ in (\ref{def:R1w}) (see also (\ref{eq:qwexp0})) and the choice of $Q_2$ according to (\ref{def:Q}), we also have
\[
\left|w^2\E^0_\xi\left[\int_0^\infty D^w_sR^w_1(\xi_s)ds\right]\right|\leq w^2C_{\ref{Qj}}(2)\E^0_\xi\left[\int_0^\infty D^w_s Q_2(\xi_s)ds\right].
\]
Applying the last three displays to the last term in (\ref{fixeq1}), we deduce the required inequality (\ref{Q1Q2}). The proof is complete.  
\end{proof}

We use the following lemma to estimate the constant $C_{\ref{def:CF}}(\overline{D})$. 

\begin{lem}\label{lem:twopoint}
For any $\ell\geq 1$,
\begin{align}
\int_0^\infty \P_\pi(B^{X_0}_s=x,B^{X_\ell}_s=y)ds\leq C_{\ref{BX}}\left(\frac{\pi_{\max}}{\pi_{\min}}\right)\frac{\pi(x)\pi(y)}{\nu(\1)},\quad \forall\;x\neq y,\label{BX}
\end{align}
for some constant $C_{\ref{BX}}$ depending only on $\ell$. In particular, we have
\begin{align}\label{BX1}
C_{\ref{def:CF}}(W_\ell)\leq 2C_{\ref{BX}}\left(\frac{\pi_{\max}}{\pi_{\min}}\right)^2.
\end{align}
\end{lem}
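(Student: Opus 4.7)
The bound \eqref{BX1} reduces to \eqref{BX} by directly unpacking \eqref{def:CF}: with $F=W_\ell$ one has $(A_1,A_2)=(\{x_0\},\{y_0\})$ and $C(A_1,A_2)=\pi(x_0)q^\ell(x_0,y_0)$, and the event $\{x\in B^{\{x_0\}}_t\cup B^{\{y_0\}}_t,M_{x_0,y_0}>t\}$ decomposes disjointly as $\{B^{x_0}_t=x,M>t\}\sqcup\{B^{y_0}_t=x,M>t\}$. Expressing each piece as a sum over $y\in E\setminus\{x\}$ and using the symmetry of $\pi(x_0)q^\ell(x_0,y_0)$ in $(x_0,y_0)$ (a consequence of the reversibility of $q^\ell$) yields
\[
C_{\ref{def:CF}}(W_\ell)\leq 2\max_{x\in E}\sum_{y\in E,\,y\neq x}\int_0^\infty\P_\pi(B^{X_0}_t=x,B^{X_\ell}_t=y)\,dt,
\]
and applying \eqref{BX} together with $\sum_y\pi(y)=1$ and $\nu(\1)=\sum_z\pi(z)^2\geq\pi_{\min}$ delivers \eqref{BX1}.

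For \eqref{BX} itself, the plan is to exploit the reversibility of the pair process $(B^u,B^v)$ killed on the diagonal $\Delta=\{(z,z):z\in E\}$, which is a reversible Markov chain on $E\times E\setminus\Delta$ with respect to $\pi\otimes\pi$; its killed kernel thus satisfies $\pi(u)\pi(v)\,p^*_s((u,v),(x,y))=\pi(x)\pi(y)\,p^*_s((x,y),(u,v))$. Combined with the reversibility $\pi(x_0)q^\ell(x_0,y_0)=\pi(y_0)q^\ell(y_0,x_0)$, an elementary manipulation of the associated Green function $G^*=\int_0^\infty p^*_s\,ds$ gives
\[
\int_0^\infty\P_\pi(B^{X_0}_s=x,B^{X_\ell}_s=y)\,ds=\pi(x)\pi(y)\,\E_{(x,y)}\!\left[\int_0^{M_{x,y}}\frac{q^\ell(B^x_s,B^y_s)}{\pi(B^y_s)}\,ds\right].
\]
Using $\pi(B^y_s)\geq\pi_{\min}$, the desired inequality \eqref{BX} reduces to
\[
E_\ell(x,y):=\E_{(x,y)}\!\left[\int_0^{M_{x,y}}q^\ell(B^x_s,B^y_s)\,ds\right]\leq C_\ell\,\pi_{\max}/\nu(\1),\qquad x\neq y.
\]

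For $\ell=1$ this last estimate is immediate: $q(u,v)+q(v,u)$ is precisely the total rate at which the killed pair chain transitions into $\Delta$ from state $(u,v)$, so standard potential theory for continuous-time Markov chains (i.e.\ Dynkin's formula applied to $\1_{E\times E\setminus\Delta}$) yields $\E_{(x,y)}[\int_0^M(q(B^x,B^y)+q(B^y,B^x))\,ds]=\P_{(x,y)}(M<\infty)=1$, hence $E_1\leq 1\leq\pi_{\max}/\nu(\1)$ after noting $\nu(\1)=\sum_z\pi(z)^2\leq\pi_{\max}$. For general $\ell\geq 2$, I would induct on $\ell$ via the decomposition $q^\ell=q\cdot q^{\ell-1}$, applying the strong Markov property at jump times of $B^x$ and $B^y$ and using the reversibility identity $q^\ell(u,v)/\pi(v)=q^\ell(v,u)/\pi(u)$ together with the iterated meeting-time estimate \eqref{eq:meetingtime} to control the accumulated error. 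The main obstacle is exactly this inductive step: unlike the case $\ell=1$, $q^\ell(B^x,B^y)$ for $\ell\geq 2$ does not admit a meeting-rate interpretation, so Dynkin's identity cannot be invoked directly; the slack factor $\pi_{\max}/\pi_{\min}$ in \eqref{BX} is exactly what is needed to absorb the loss incurred by comparing $q^\ell(u,v)$ with $\pi(v)$ via reversibility at each induction step, and the iterated form of \eqref{ell=1-0} controls the resulting cumulative growth of $C_\ell$ in $\ell$.
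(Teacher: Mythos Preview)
Your derivation of \eqref{BX1} from \eqref{BX} is fine and matches the paper. Your treatment of the case $\ell=1$ is correct and is essentially the Kac-formula argument the paper alludes to but deliberately bypasses: the paper instead proves the \emph{exact} identity
\[
\sum_{u,v}\pi(u)^2q(u,v)\int_0^\infty\bigl[\P(B^u_s=x,B^v_s=y)+\P(B^u_s=y,B^v_s=x)\bigr]\,ds=\pi(x)\pi(y),\quad x\neq y,
\]
by computing $\langle M,M\rangle$ via the voter model, passing $t\to\infty$ in the resulting equation for $\E_\xi[p_1(\xi_t)p_0(\xi_t)]$, and then using a linear-algebra lemma (if a symmetric zero-diagonal matrix $A$ satisfies $\xi^\top A\widehat\xi=0$ for all $\xi\in S^E$ then $A=0$) to extract the pointwise identity. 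Either route gives \eqref{BX} for $\ell=1$.

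The gap is your step $\ell\geq 2$. You have reduced to bounding $E_\ell(x,y)=\E_{(x,y)}\bigl[\int_0^{M}q^\ell(B^x_s,B^y_s)\,ds\bigr]$, but the induction you sketch via $q^\ell=q\cdot q^{\ell-1}$ is not carried out, and the heuristic of ``absorbing a factor $\pi_{\max}/\pi_{\min}$ at each induction step'' would produce a bound with $(\pi_{\max}/\pi_{\min})^\ell$ rather than the single factor $\pi_{\max}/\pi_{\min}$ asserted in \eqref{BX}. The correct recursion operates not on the integrand $q^\ell$ but on the \emph{initial law} $(X_0,X_\ell)$: apply the first-jump decomposition \eqref{F_recur} to $\int_0^\infty\E[F(B^{X_0}_t,B^{X_\ell}_t)]\,dt$ with $F(u,v)=\1_{\{u=x,v=y\}}$, integrate in $t$, and use reversibility of $q$ under $\pi$ to obtain
\[
\int_0^\infty\P_\pi(B^{X_0}_s=x,B^{X_{\ell+1}}_s=y)\,ds\;\leq\;\int_0^\infty\P_\pi(B^{X_0}_s=x,B^{X_\ell}_s=y)\,ds+\int_0^\infty\P_\pi(B^{X_0}_s=x,B^{X_1}_s=y)\,ds.
\]
This is an \emph{additive} recursion yielding $C_\ell=\ell\,C_1$, with no further loss in $\pi_{\max}/\pi_{\min}$. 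In your Green-function language this is exactly the statement $I_{\ell+1}\leq I_\ell+I_1$ for $I_\ell(x,y)=\E_{(x,y)}\bigl[\int_0^M q^\ell(B^x_s,B^y_s)/\pi(B^y_s)\,ds\bigr]$, which combined with your bound $I_1\leq 1/\pi_{\min}$ closes the argument.
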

\begin{proof}
The required inequality (\ref{BX}) for $\ell=1$ is a particular consequence of Kac's formula (cf. \cite[Section~2.5.1]{AF_MC}), but below we give an alternative  proof of (\ref{BX}) with $\ell=1$ by voter model calculations.  
To see the proof of (\ref{BX}) for $\ell\geq 2$, we notice the following general result. If $F$ is a nonnegative function on $E\times E$ which vanishes on the diagonal, then (\ref{F_recur}) implies that for all $\ell\geq 1$,
\begin{align*}
&\int_0^\infty \E[F(B^{X_0}_t,B^{X_\ell}_t)]dt
=\frac{\E[F(X_0,X_\ell)]}{2}+\frac{1}{2}\int_0^\infty \E[F(B^{X_0}_t,B^{X_{\ell+1}}_t)\1_{\{X_0\neq X_\ell\}}]dt\\
&+\frac{1}{2}\int_0^\infty \E[F(B^{X_{\ell+1}}_t,B^{X_{0}}_t)\1_{\{X_0\neq X_\ell\}}]dt\\
=&\frac{\E[F(X_0,X_\ell)]}{2}+\int_0^\infty \E[F(B^{X_0}_t,B^{X_{\ell+1}}_t)]dt
-\int_0^\infty \E[F(B^{X_\ell}_t,B^{X_{\ell+1}}_t)\1_{\{X_0=X_\ell\}}]dt,
\end{align*}
and so
\begin{align*}
&\int_0^\infty \E[F(B^{X_0}_t,B^{X_\ell}_t)]dt
+\int_0^\infty \E[F(B^{X_0}_t,B^{X_1}_t)]dt\geq \int_0^\infty \E[F(B^{X_0}_t,B^{X_{\ell+1}}_t)]dt.
\end{align*}
Thus (\ref{BX}) for general $\ell\geq 2$ follows from the above inequality and the validity of (\ref{BX}) for $\ell=1$.  The inequality (\ref{BX1}) then follows by writing out $C_{\ref{def:CF}}(W_\ell)$:
\begin{align*}
C_{\ref{def:CF}}(W_\ell)=&\max_{x\in E}\sum_{u,v\in E}\pi(u)q^\ell(u,v)\int_0^\infty \P(x\in B^{\{u,v\}}_t,M_{u,v}>t)dt
\end{align*}
and using (\ref{BX}) and the fact that $\nu(\1)\geq \pi_{\min}$.

Now we give a proof of (\ref{BX}) with $\ell=1$ by voter model calculations.
It follows from (\ref{MM}) and (\ref{xixy})  that  for all $\xi\in S^E$,
\begin{align}\label{local-den}
\begin{split}
\E_\xi^0[p_1(\xi_t)p_0(\xi_t)]=&p_1(\xi)p_0(\xi)-\sum_{x,y\in E}\pi(x)^2q(x,y)\int_0^t \E^0_\xi[\xi_s(x)\dxi_s(y)+\dxi_s(x)\xi_s(y)]ds
\end{split}
\end{align}
(see also \cite[Theorem 3.1]{CCC}). Passing $t\to\infty$ for both sides of the foregoing equality, we get
\[
\sum_{x,y\in E}\pi(x)^2q(x,y)\int_0^\infty  \E^0_\xi[\xi_s(x)\dxi_s(y)+\dxi_s(x)\xi_s(y)]ds
=\frac{p_1p_0(\xi)}{2},\quad \forall \;\xi\in S^E.
\]
By the duality equation in (\ref{duality}), both sides of the foregoing equality take the form $\xi A\dxi$ of a matrix product for a symmetric $N\times N$-matrix $A$ with zero diagonal entries. Hence, to prove (\ref{BX}) for $\ell=1$, it suffices to show that, for a symmetric $N\times N$ matrix $A$ with zero diagonal,
\begin{align}\label{eq:vA}
\xi^\top  A\dxi=0\quad \;\forall \;\xi\in S^E \Longrightarrow A=0.
\end{align}

The following proof of (\ref{eq:vA}) is due to Rani Hod~\cite{Rani}. Let $\{e_x;x\in E\}$ be the standard basis of $S^E$.
Taking $\xi=e_x$ for $x\in E$ in (\ref{eq:vA}), we obtain that
$e_x^\top  A\big(\sum_{y:y\neq x}e_y\big)=0$
and so 
$e^\top_x A\1=0$
by the assumption that $A$ has zero diagonal entries. 
Hence, taking $\xi=e_x+e_y$ for $x\neq y$ in (\ref{eq:vA}), we obtain from (\ref{eq:vA}) that
\[
0=-(e_x+e_y)^\top A \left(\1-e_x-e_y\right) =A_{x,x}+A_{x,y}+A_{y,x}+A_{y,y}=A_{x,y}+A_{y,x}=2A_{x,y}
\]
since $A$ is symmetric. The last equality proves that $A=0$. This completes the proof.
\end{proof}

\begin{eg}\label{eg:order}
We show by an example that Lemma~\ref{lem:twopoint} gives a sharp estimate of $\E^0_\xi[\int_0^\infty \overline{D}(\xi_s)ds]$ in terms of its order relative to the population size $N$. Recall that $\E^0_\xi[\int_0^\infty \overline{D}(\xi_s)ds]$ is the first-order coefficient in the expansion (\ref{Q1Q2}) of the game absorbing probabilities. 

Let $(E,q)$ be the random walk transition probability on a finite, simple, connected, $k$-regular graph with $N$ vertices, and let $\mathbf u_m$ be the uniform probability measure on the set of $S^E$-valued configurations with exactly $m$ many $1$'s. Assume that $\Pi$ is given by the special payoff matrix (\ref{eq:payoff0}). By Lemma~\ref{lem:twopoint}, the moment duality equation (\ref{duality}) and the random-walk representation of $\overline{D}$ in (\ref{eq:Dbar}), we deduce that, for any $1\leq m\leq N-1$, 
\begin{align}\label{bcpayoff}
 \left|\E^0_{\mathbf u_m}\left[\int_0^\infty \overline{D}(\xi_s)ds\right]\right|\leq C_{\ref{bcpayoff}}
\frac{\pi_{\max}}{\pi_{\min}\nu(\1)}\sum_{x,y\in E}\mathbf u_m[\xi(x)\dxi(y)]\pi(x)\pi(y),
\end{align}
where the constant $C_{\ref{bcpayoff}}$ depends only on $\Pi$. In the present case of random walks on regular graphs, the above inequality simplifies to
\begin{align}
 \left|\E^0_{\mathbf u_m}\left[\int_0^\infty \overline{D}(\xi_s)ds\right]\right|\leq 
C_{\ref{bcpayoff}}\sum_{x,y\in E}\frac{\mathbf u_m[\xi(x)\dxi(y)] }{N}=C_{\ref{bcpayoff}}\frac{m(N-m)}{N}\label{bcpayoff1}
\end{align}
(cf. \cite[Eq. (68)]{Chen_2015} for the last equality). On the other hand, it has been proven that 
\begin{align}\label{bcpayoff0}
\E_{\mathbf u_m}^0\left[\int_0^\infty \overline{D}(\xi_s)ds\right]=\frac{m(N-m)}{2N(N-1)}[b(N-2k)-ck(N-2)]
\end{align}
(cf. \cite[Theorem~1]{Chen_2013} or \cite[Proposition~10]{Chen_2015}).

From (\ref{bcpayoff1}) and (\ref{bcpayoff0}), we see that Lemma~\ref{lem:twopoint} gives a sharp estimate of $\big|\E_{\mathbf u_m}^0[\int_0^\infty \overline{D}(\xi_s)ds]\big|$ relative to the population size $N$.   
\qed 
\end{eg}

The main result of Section~\ref{sec:expansion} is the following theorem. See Example~\ref{eg:order} for the choice of the denominators in one of its conditions, (\ref{Dlbdd}), and recall the notation $w^\star(\lambda;E,q)$ defined at the beginning of Section~\ref{sec:expansion}.

\begin{thm}
\label{thm:main1}
Let a sequence of voting kernels $\{(E_n,q^{(n)})\}$ and $\lambda_n\in \ms P(S^{E_n})$ be given such that (\ref{diag}) holds and 
\begin{align}
\liminf_{n\to\infty}\left.\left|\E^{(n)}_{\lambda_n}\left[\int_0^\infty \overline{D}(\xi_s)ds\right]\right|\right/\left(N_n\sum_{x,y\in E_n}\lambda_n[\xi(x)\dxi(y)]\pi^{(n)}(x)\pi^{(n)}(y)\right)>0.\label{Dlbdd}
\end{align}
Then for some positive constant $C_{\ref{main1}}$ depending only on $\Pi$, $\displaystyle \limsup  \pi^{(n)}_{\max}/\pi^{(n)}_{\min}$, and the above limit infimum, it holds that  
\begin{align}\label{main1}
\liminf_{n\to\infty}
N_nw^\star\big(\lambda_n;E_n,q^{(n)}\big)
\geq C_{\ref{main1}}>0.
\end{align}
\end{thm}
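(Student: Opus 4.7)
The strategy is to extract the threshold $w^\star\gtrsim 1/N_n$ directly from the first-order Taylor expansion in Proposition~\ref{prop:selection}. Set $\alpha_n:=\sum_{x,y\in E_n}\lambda_n[\xi(x)\widehat\xi(y)]\pi^{(n)}(x)\pi^{(n)}(y)$, so that assumption (\ref{Dlbdd}) reads $|\E^0_{\lambda_n}[\int_0^\infty\overline{D}(\xi_s)ds]|\ge c_0 N_n\alpha_n$ for some $c_0>0$ and all large $n$. I aim to show that the two remainder terms in (\ref{Q1Q2}) are each of order $w^2 N_n^2\alpha_n$, so that for $w$ below a fixed constant multiple of $1/N_n$ the linear term strictly dominates and the sign-matching defining $w^\star$ is automatic.

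The first task is to bound the voter-model potentials $\E^0_{\lambda_n}[\int_0^\infty W_\ell(\xi_s)ds]$ for $\ell=1,\dots,4$. Rewriting $W_\ell(\xi_t)$ via the moment duality (\ref{duality}) and summing $\int_0^\infty\P^{(n)}(B^x_t=u,B^y_t=v)dt$ against the weights $\pi^{(n)}(x)q^{(n),\ell}(x,y)$, Lemma~\ref{lem:twopoint} delivers
\[
\E^0_{\lambda_n}\!\left[\int_0^\infty W_\ell(\xi_s)\,ds\right]\le C\,\frac{\pi^{(n)}_{\max}}{\pi^{(n)}_{\min}\,\nu_n(\1)}\,\alpha_n \le C'\,N_n\,\alpha_n,
\]
since $\nu_n(\1)\ge\pi^{(n)}_{\min}\asymp 1/N_n$ under Assumption~\ref{ass:stat}. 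Because $Q_1$ and $Q_2$ are each dominated by a constant multiple of $\sum_{\ell=1}^4 W_\ell$ by (\ref{Qj}), this yields $\E^0_{\lambda_n}[\int_0^\infty Q_j(\xi_s)ds]\le C''N_n\alpha_n$ for $j=1,2$.

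Next, I would promote these $\P^0$-estimates to the tilted measure via Lemma~\ref{lem:QF}. The uniform bound on $C_{\ref{def:CF}}(Q_1)$ from (\ref{BX1}) (controlled by $\pi^{(n)}_{\max}/\pi^{(n)}_{\min}$, hence bounded under Assumption~\ref{ass:stat}) forces the threshold in (\ref{w:constraint}) to be of order $1/N_n$; call it $\tau_n$. For $w\le\tau_n/2$, (\ref{FQ1}) gives $\E^0_{\lambda_n}[\int_0^\infty D^w_s Q_1(\xi_s)ds]\le 2C''N_n\alpha_n$, and applying (\ref{FQ}) with $F=Q_2$ yields the same bound for $Q_2$. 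The constant $C_{\ref{def:CF}}(\overline{D})$ appearing in the remainder of (\ref{Q1Q2}) is also uniformly bounded: $\overline{D}\in\ms P$ is an explicit polynomial in $\xi$ of bounded degree, and for any disjoint nonempty $A_1,A_2$ the subsystem meeting time $M_{A_1,A_2}$ is dominated by the two-point meeting time of any pair $(u,v)\in A_1\times A_2$, so an adaptation of the proof of Lemma~\ref{lem:twopoint} supplies the uniform bound in terms of $\pi^{(n)}_{\max}/\pi^{(n)}_{\min}$ and $\Pi$. Substituting into (\ref{Q1Q2}) and using $1/\pi^{(n)}_{\min}\asymp N_n$, the quadratic remainder is $\le C_0 w^2 N_n^2\alpha_n$. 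The linear term $\ge c_0 wN_n\alpha_n$ thus strictly dominates whenever $w\le\min\{\tau_n,\,c_0/(2C_0 N_n)\}$; both entries being of order $1/N_n$, this gives (\ref{main1}).

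The delicate point is the coincidence of two independent powers of $N_n$ entering the quadratic remainder: one from the factor $1/\pi^{(n)}_{\min}$ in (\ref{Q1Q2}), the other from the Green-function blow-up $\E^0_{\lambda_n}[\int_0^\infty W_\ell(\xi_s)ds]\asymp N_n\alpha_n$ supplied by Lemma~\ref{lem:twopoint}. Example~\ref{eg:order} demonstrates sharpness of the second factor, so the scale $w^\star\sim 1/N_n$ produced by the above comparison is intrinsic to the method and cannot be improved without finer cancellation information in the dual coalescing system.
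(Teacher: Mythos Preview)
Your proof is correct and follows essentially the same approach as the paper: both arguments feed Proposition~\ref{prop:selection} into Lemma~\ref{lem:QF} and Lemma~\ref{lem:twopoint}, using Assumption~\ref{ass:stat} to convert $\pi^{(n)}_{\min}$ and $\nu_n(\1)$ into powers of $N_n$, and then compare the linear term $c_0wN_n\alpha_n$ against the quadratic remainder $C_0w^2N_n^2\alpha_n$. The only minor difference is in the justification that $C_{\ref{def:CF}}(\overline{D}^{(n)})$ is uniformly bounded: the paper observes (via a generalization of (\ref{eq:Dbar}) to arbitrary payoff matrices) that the representation of $\overline{D}^{(n)}$ in $\ms P$ has the same combinatorial structure as that of the $W_\ell$'s, so (\ref{BX1}) applies directly, whereas you invoke a slightly more general adaptation of Lemma~\ref{lem:twopoint} to finite subsystems---either route works and yields the same uniform bound in terms of $\Pi$ and $\pi^{(n)}_{\max}/\pi^{(n)}_{\min}$.
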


\begin{proof}
It follows from (\ref{def:Dbar1}) and a straightforward generalization of (\ref{eq:Dbar}) under a general payoff matrix that
$\overline{D}^{(n)}(\xi)$ can be dominated by a linear combination of the three functions 
$\E^{(n)}_\pi[\xi(X_0)\dxi(X_\ell)]=W_\ell^{(n)}(\xi)$, $1\leq \ell\leq 3$, with the coefficients being positive and depending only on $\Pi$. 
So if we apply (\ref{BX1}) to $\overline{D}^{(n)}$ and $Q_1^{(n)}$ (defined by (\ref{Qj})), it holds that
\begin{align}\label{Dbarineq1}
\sup_{n\in \Bbb N}\max\{C_{\ref{def:CF}}(\overline{D}^{(n)}),C_{\ref{def:CF}}(Q_1^{(n)})\}\leq C_{\ref{Dbarineq1}}
\end{align}
by Assumption~\ref{ass:stat}.

The foregoing inequality allows us to estimate the right-hand side of (\ref{Q1Q2}) with respect to the $n$-th model as follows. We apply (\ref{FQ}), (\ref{FQ1}) and (\ref{Dbarineq1}) to get the following:
\begin{align}
&\frac{w^2C_{\ref{Qj}}(1)\cdot C_{\ref{def:CF}}(\overline{D}^{(n)})}{\pi^{(n)}_{\min}}\E_{\lambda_n}^{(n)}\left[\int_0^\infty D^{w}_sQ_1^{(n)}(\xi_s)ds\right]\notag\\
&\hspace{4cm}+w^2C_{\ref{Qj}}(2) \E^{(n)}_{\lambda_n}\left[\int_0^\infty D^{w}_sQ_2^{(n)}(\xi_s)ds\right]\notag\\
\leq & C_{\ref{Dbarineq2}}N_nw^2\left(\E_{\lambda_n}^{(n)}\left[\int_0^\infty Q_1^{(n)}(\xi_s)ds\right]+\E_{\lambda_n}^{(n)}\left[\int_0^\infty Q_2^{(n)}(\xi_s)ds\right]\right)\label{Dbarineq2}\\
\leq &C_{\ref{Dbarineq3}}N_n w^2\left(N_n\sum_{x,y\in E_n}\lambda_n[\xi(x)\dxi(y)]\pi^{(n)}(x)\pi^{(n)}(y)\right),\quad \forall\; w\in [0, N_n^{-1}w_{\ref{Dbarineq3}}].\label{Dbarineq3}
\end{align}
In more detail, the constant $w_{\ref{Dbarineq3}}$ above is chosen to meet the constraints (\ref{w:constraint}) for all $n$ to validate (\ref{Dbarineq2}) by (\ref{FQ1}); it can be chosen to be bounded away from zero by (\ref{Dbarineq1}).
Also, (\ref{Dbarineq3}) follows from (\ref{Qj}), (\ref{BX}) and Assumption~\ref{ass:stat}.
 By decreasing $w_{\ref{Dbarineq3}}>0$ according to $C_{\ref{Dbarineq3}}$ and the limit infimum in (\ref{Dlbdd}) if necessary, we  deduce (\ref{main1}) from (\ref{Q1Q2}), (\ref{Dlbdd}) and (\ref{Dbarineq3}). The proof is complete.
\end{proof}

\section{Expansions of some covariation processes}\label{sec:flip}
In this section, we show some calculations to simplify the first-order coefficients in the expansions of $\langle M,D^w\rangle$ and $ \langle D^w,D^w\rangle$ in (\ref{Dp}) and (\ref{DD}).  
Recall the discrete-time $q$-Markov chains $(X_\ell)$ and $(Y_\ell)$ defined  at the beginning of Section~\ref{sec:tight}. We write $\E_x$ for the expectation under which the common starting point of $(X_\ell)$ and $(Y_\ell)$ is $x$.

\begin{lem}\label{lem:payoff}
If the payoff matrix $\Pi$ is given by (\ref{eq:payoff0}), then the function $\overline{D}(\xi)$ defined by (\ref{def:Dbar1}), which enters the first-order coefficient of $\langle M,D^w\rangle$ in (\ref{Dp}), can be written as
\begin{align}
\overline{D}(\xi)
=&b\big(\E_\pi[\xi(X_0)\dxi(X_3)]-\E_\pi[\xi(X_0)\dxi(X_1)]\big)
-c\E_\pi[\xi(X_0)\dxi(X_2)].\label{eq:Dbar}
\end{align}
For the integrand in the first-order expansion of $\langle D^w,D^w\rangle$ in (\ref{DD}), we have
\begin{align}
&\sum_{x,y\in E}\pi(x)q(x,y)[A(x,\xi)-B(y,\xi)]^2\notag\\
\begin{split}
=&b^2\E_\pi[\xi(X_0)\dxi(X_4)-\xi(X_0)\dxi(X_2)]-2bc\E_\pi[\xi(X_0)\dxi(X_3)-\xi(X_0)\dxi(X_1)]\\
&+c^2\E_\pi[\xi(X_0)\dxi(X_2)].\label{eq:Dbar2}
\end{split}
\end{align}
Here, the functions $A(x,\xi)$ and $B(y,\xi)$ are defined by (\ref{def:A}) and (\ref{def:B}), respectively. 
\end{lem}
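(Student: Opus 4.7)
My strategy is to exploit the special structure of the payoff matrix (\ref{eq:payoff0}). A direct check shows that for $\sigma,\tau\in\{0,1\}$ we have $\Pi(\sigma,\tau) = b\tau - c\sigma$. Substituting this into (\ref{def:A})--(\ref{def:B}) and using the row-sum identity $\sum_z q(u,z) = 1$ gives the compact forms
\[
A(x,\xi) = 1 - b(q^2\xi)(x) + c(q\xi)(x),\qquad B(y,\xi) = 1 - b(q\xi)(y) + c\xi(y),
\]
where I write $(q^\ell\xi)(u) := \sum_z q^\ell(u,z)\xi(z)$. Hence
\[
A(x,\xi) - B(y,\xi) = b\bigl[(q\xi)(y) - (q^2\xi)(x)\bigr] + c\bigl[(q\xi)(x) - \xi(y)\bigr].
\]
Everything afterward reduces to integrating polynomials in $\xi$ against the reversible measure $\pi(x)q(x,y)$, which can be done using only two identities: stationarity $\sum_x \pi(x)q(x,y) = \pi(y)$, and reversibility $\pi(x)q(x,y) = \pi(y)q(y,x)$, iterated to $\pi(x)q^\ell(x,z) = \pi(z)q^\ell(z,x)$.

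For (\ref{eq:Dbar}), I would substitute the split form of $A-B$ into (\ref{def:Dbar1}), expand the product $[\xi(y)-\xi(x)][A-B]$ into eight monomials grouped into a $b$-part and a $c$-part, and evaluate each integrated monomial. Repeated use of reversibility transfers $q$-steps between the two variables: for example, $\sum_x \pi(x)q(x,y)q^2(x,z) = \pi(y)q^3(y,z)$, which is exactly what converts naive $\E_\pi[\xi(X_1)\xi(Y_2)]$-type expressions into bona fide expectations along a single $q$-chain from stationarity. Writing $e_\ell(\xi) := \E_\pi[\xi(X_0)\xi(X_\ell)]$, the $b$-part collapses to $b(e_1 - e_3)$ and the $c$-part to $c(e_2 - p_1(\xi))$ after internal cancellations. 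Since $e_\ell(\xi) = p_1(\xi) - W_\ell(\xi)$, one has $e_1 - e_3 = W_3 - W_1$ and $e_2 - p_1 = -W_2$, and these combine to exactly the right-hand side of (\ref{eq:Dbar}).

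For (\ref{eq:Dbar2}), the same method applied to $(A-B)^2$ splits the integrand into a $b^2$-, a $c^2$-, and a mixed $2bc$-part. The key new ingredient, appearing in the $b^2$-term, is the doubly-reversed identity $\sum_x \pi(x)q^2(x,z)q^2(x,z') = \pi(z)q^4(z,z')$, which yields the $e_4(\xi)$ in $b^2(W_4 - W_2) = b^2(e_2 - e_4)$. Similarly, the $c^2$-part reduces to $c^2(p_1(\xi) - e_2(\xi)) = c^2 W_2(\xi)$ by the same mechanism used for (\ref{eq:Dbar}).

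The main (albeit mild) obstacle is the bookkeeping in the mixed $2bc$-part of $(A-B)^2$: four integrated monomials each produce $\pm bc\cdot e_\ell$ for $\ell\in\{1,3\}$, and one must check carefully that three of the four combine so as to leave $2bc(e_3 - e_1) = -2bc(W_3 - W_1)$ required by (\ref{eq:Dbar2}). No genuine analytical difficulty arises; the proof is purely algebraic, driven by the short dictionary between operator products $(q^j\xi)(x)(q^k\xi)(x)$ or $(q^j\xi)(x)(q^k\xi)(y)$ (summed against $\pi(x)q(x,y)$) and the quantities $e_{j+k}(\xi)$ obtained via reversibility.
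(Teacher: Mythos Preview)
Your proposal is correct and follows essentially the same route as the paper's proof: both exploit $\Pi(\sigma,\tau)=b\tau-c\sigma$ to write $A$ and $B$ in terms of $q^\ell\xi$, then reduce the $\pi(x)q(x,y)$-integrals to single-chain correlations $\E_\pi[\xi(X_0)\xi(X_\ell)]$ via reversibility, with the final conversion to $W_\ell$ using $\E_\pi[\xi(X_0)\xi(X_\ell)]=p_1(\xi)-W_\ell(\xi)$. The only differences are cosmetic (your operator notation $(q^\ell\xi)(x)$ versus the paper's $\E_x[\xi(X_\ell)]$, and your shorthand $e_\ell$).
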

\begin{proof}
The proof of (\ref{eq:Dbar}) is almost identical to the proof of \cite[Theorem~1 (1)]{Chen_2013}. We include its short proof here for the convenience of the reader. Recall the definitions of $A$ and $B$ in (\ref{def:A}) and (\ref{def:B}). Since $\Pi\big(\xi(x),\xi(y)\big)=b\xi(y)-c\xi(x)$, we have 
\begin{align*}
A(x,\xi)=&1-\sum_{z\in E}\sum_{z'\in E}q(x,z)q(z,z')
\big(b\xi(z')-c\xi(z)\big)=1-\E_x[b\xi(X_2)-c\xi(X_1)],\\
B(y,\xi)=&1-\sum_{z\in E}q(y,z)\big(b\xi(z)-c\xi(y)\big)=1-\E_y[b\xi(X_1)-c\xi(X_0)].
\end{align*}
The foregoing equations give
\begin{align*}
&\sum_{x,y\in E}\pi(x)q(x,y)[\xi(y)-\xi(x)][A(x,\xi)-B(y,\xi)]\\
=&\sum_{x,y\in E}\pi(x)q(x,y)[\xi(y)-\xi(x)]\big(-\E_x[b\xi(X_2)-c\xi(X_1)]+\E_y[b\xi(X_1)-c\xi(X_0)]\big)\\
=&\sum_{x,y\in E}\pi(x)q(x,y)[\xi(y)-\xi(x)]\big(b\E_y[\xi(X_2)+\xi(X_1)]-c\E_y[\xi(X_1)+\xi(X_0)]\big)\\
=&b\big(\E_\pi[\xi(X_0)\xi(X_1)]-\E_\pi[\xi(X_0)\xi(X_3)]\big)
-c\big(\E_\pi[\xi(X_0)]-\E_\pi[\xi(X_0)\xi(X_2)]\big)\\
=&b\big(\E_\pi[\xi(X_0)\dxi(X_3)]-\E_\pi[\xi(X_0)\dxi(X_1)]\big)
-c\E_\pi[\xi(X_0)\dxi(X_2)],
\end{align*}
as required. Notice that we use the reversibility of $q$ in the second equality above. 

The proof of (\ref{eq:Dbar2}) is similar and the reversibility of $q$ is used again. We have
\begin{align*}
&\sum_{x,y\in E}\pi(x)q(x,y)[A(x,\xi)-B(y,\xi)]^2\\
=&\sum_{x,y\in E}\pi(x)q(x,y)\big(-\E_x[b\xi(X_2)-c\xi(X_1)]+\E_y[b\xi(X_1)-c\xi(X_0)]\big)^2\\
=&\E_\pi\big[\big(b\xi(X_2)-c\xi(X_1)\big)(b\xi(Y_2)-c\xi(Y_1)\big)\big]-2\E_\pi\big[\big(b\xi(X_2)-c\xi(X_1)\big)(b\xi(Y_2)-c\xi(Y_1)\big)\big]\\
&+\E_\pi\big[\big(b\xi(X_1)-c\xi(X_0)\big)(b\xi(Y_1)-c\xi(Y_0)\big)\big]\\
=&-\E_\pi\big[\big(b\xi(X_2)-c\xi(X_1)\big)(b\xi(Y_2)-c\xi(Y_1)\big)\big]
+\E_\pi\big[\big(b\xi(X_1)-c\xi(X_0)\big)(b\xi(Y_1)-c\xi(Y_0)\big)\big]\\
=&b^2\E_\pi[\xi(X_0)\dxi(X_4)-\xi(X_0)\dxi(X_2)]-2bc\E_\pi[\xi(X_0)\dxi(X_3)-\xi(X_0)\dxi(X_1)]+c^2\E_\pi[\xi(X_0)\dxi(X_2)].
\end{align*}
This completes the proof. 
\end{proof}

\section{Feynman-Kac duality for voter models}\label{sec:FK}
In this section, we give a brief discussion of the Feynman-Kac duality for voter models, which we use in the earlier sections. 
Although these results are usually thought to be standard, they seem difficult to find in the literature.

We introduce some functions. First we set 
$J_\Sigma(\xi;x)=\1_\Sigma\big(\xi(x)\big)$
for any subset $\Sigma$ of $S$ and $x\in E$. Then for any $m$-tuple $A=(x_1,\cdots,x_m)$ of points of $E$ and $k$-tuple $\bs \Sigma=(\Sigma_1,\cdots,\Sigma_m)$ of subsets of $S$, we define
\begin{align*}
H_{\bs \Sigma}(\xi;A)= &\prod_{i=1}^m[J_{\Sigma_i}(\xi;x_i)-\overline{\mu}(\Sigma_i)],
\end{align*}
where $\overline{\mu}$ is defined by (\ref{def:mubar}).
We write $\mathsf L_{B,m}$ for the generator of a $m$-tuple of coalescing $q$-Markov chains, which allows for the possibility that there are less than $m$ distinct points in the system.   

\begin{prop}\label{prop:dual0}
For any $m$-tuple $A$ with distinct entries and $m$-tuple $\bs \Sigma$ of subsets of $S$, we have 
\begin{align}\label{gen1}
\mathsf L^{0,\mu} H_{\bs \Sigma}(\,\cdot\,;A)(\xi)=\big[\mathsf L_{B,\ell}-m\mu(\1)\big]H_{\bs \Sigma}(\xi;\,\cdot\,)(A).
\end{align}
\end{prop}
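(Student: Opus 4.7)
\noindent\textbf{Proof proposal for Proposition~\ref{prop:dual0}.} The plan is to split $\mathsf L^{0,\mu}=\mathsf L^{\rm vot}+\mathsf L^{\rm mut}$ into the voter part (first sum in (\ref{def:Lw})) and the mutation part (second sum), handle each separately, and show that the voter part reproduces $\mathsf L_{B,m}$ while the mutation part yields exactly the killing rate $-m\mu(\1)$.

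First I would dispense with the mutation part. Fix $i\in\{1,\dots,m\}$ and note that for $x=x_i\in A$, since the entries of $A$ are distinct, the only factor in $H_{\bs\Sigma}(\xi^{x_i|\sigma};A)$ that is affected is the $i$-th one, which becomes $\1_{\Sigma_i}(\sigma)-\overline{\mu}(\Sigma_i)$. The key identity
\begin{align*}
\int_S\big[\1_{\Sigma_i}(\sigma)-\overline{\mu}(\Sigma_i)\big]\,d\mu(\sigma)=\mu(\Sigma_i)-\overline{\mu}(\Sigma_i)\mu(\1)=0
\end{align*}
then forces $\int_S H_{\bs\Sigma}(\xi^{x_i|\sigma};A)\,d\mu(\sigma)=0$, while $\int_S H_{\bs\Sigma}(\xi;A)\,d\mu(\sigma)=\mu(\1) H_{\bs\Sigma}(\xi;A)$. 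For $x\notin A$, $\xi^{x|\sigma}$ agrees with $\xi$ on $A$ and contributes nothing. Summing over $i$ yields $\mathsf L^{\rm mut}H_{\bs\Sigma}(\,\cdot\,;A)(\xi)=-m\mu(\1) H_{\bs\Sigma}(\xi;A)$.

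Next I would reformulate the voter part. Using $c^0(x,\xi)=\sum_y q(x,y)\1_{\{\xi(x)\neq\xi(y)\}}$ and the fact that $\xi^x=\xi_{x\to y}$ (the configuration obtained by setting $\xi(x):=\xi(y)$) whenever $\xi(x)\neq\xi(y)$, while $\xi_{x\to y}=\xi$ otherwise, one obtains
\begin{align*}
\mathsf L^{\rm vot}F(\xi)=\sum_{x,y\in E}q(x,y)\big[F(\xi_{x\to y})-F(\xi)\big].
\end{align*}
Apply this to $F(\xi)=H_{\bs\Sigma}(\xi;A)$. Because the entries of $A$ are distinct, $\xi_{x\to y}$ and $\xi$ agree on $A$ unless $x=x_i$ for some $i$; and when $x=x_i$, only the $i$-th factor changes, giving
\begin{align*}
H_{\bs\Sigma}(\xi_{x_i\to y};A)-H_{\bs\Sigma}(\xi;A)=\big[J_{\Sigma_i}(\xi;y)-J_{\Sigma_i}(\xi;x_i)\big]\prod_{j\neq i}\big[J_{\Sigma_j}(\xi;x_j)-\overline{\mu}(\Sigma_j)\big].
\end{align*}
On the other hand, writing $A^{i\to y}$ for the tuple with $x_i$ replaced by $y$, the coalescing generator acting on the tuple variable at a distinct tuple $A$ is $\mathsf L_{B,m}G(A)=\sum_{i=1}^m\sum_{y\in E}q(x_i,y)[G(A^{i\to y})-G(A)]$ (each coordinate jumps independently as a $q$-chain; the coalescence rule only constrains subsequent motion and does not alter the infinitesimal generator at distinct tuples). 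Taking $G(\,\cdot\,)=H_{\bs\Sigma}(\xi;\,\cdot\,)$ gives the very same summand as above, so $\mathsf L^{\rm vot}H_{\bs\Sigma}(\,\cdot\,;A)(\xi)=\mathsf L_{B,m}H_{\bs\Sigma}(\xi;\,\cdot\,)(A)$. Combining with the mutation computation completes the proof.

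The main obstacle, such as it is, is the reformulation of the flip generator as a sum over directed pairs $(x,y)$ with rate $q(x,y)$ of the copy-from-neighbor dynamics; once this reformulation is in place, the proof is a direct matching of two explicit finite sums, and the $-m\mu(\1)$ term emerges from the mean-zero property of the centered indicators $\1_{\Sigma_i}-\overline{\mu}(\Sigma_i)$ against $\mu$.
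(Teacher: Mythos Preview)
Your proof is correct and follows essentially the same approach as the paper: split $\mathsf L^{0,\mu}$ into voter and mutation parts, use distinctness of the entries of $A$ to reduce each to a sum over the $m$ coordinates, and match the voter part with $\mathsf L_{B,m}$ while the mutation part gives the killing rate $-m\mu(\1)$. Your copy-from-neighbor reformulation $\mathsf L^{\rm vot}F(\xi)=\sum_{x,y}q(x,y)[F(\xi_{x\to y})-F(\xi)]$ is a slightly cleaner way to read off the voter contribution than the paper's direct manipulation of $[\xi(x_i)\dxi(y)+\dxi(x_i)\xi(y)][\1_{\Sigma_i}(\dxi(x_i))-\1_{\Sigma_i}(\xi(x_i))]$, but the two routes are equivalent and both rest on the same observation.
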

\begin{proof}
Write $A=(x_1,\cdots,x_m)$ and $\bs \Sigma=(\Sigma_1,\cdots,\Sigma_m)$.
For any $x\in A$ and $y\in E$, define $A^{x,y}$ to be the $m$-tuple obtained from $A$ by replacing the entry $x$ of $A$ with $y$. 
Then by the definitions of $\mathsf L^{0,0}$ in (\ref{def:Lw}), we have
\begin{align*}
&\mathsf L^{0,0}H_{\bs \Sigma}(\,\cdot\,;A)(\xi)=\sum_{x\in E}\left(\sum_{y\in E}q(x,y)[\xi(x)\dxi(y)+\dxi(x)\xi(y)]\right)\big[H_{\bs \Sigma}(\xi^x;A)-H_{\bs \Sigma}(\xi;A)\big]\\
=&\sum_{i=1}^m\sum_{y\in E}q(x_i,y)[\xi(x_i)\dxi(y)+\dxi(x_i)\xi(y)]\left[\1_{\Sigma_i}\big(\dxi(x)\big)-\1_{\Sigma_i}\big(\xi(x_i)\big)\right] \prod_{j:j\neq i} \big[\1_{\Sigma_j}\big(\xi(x_j)\big)-\overline{\mu}\big(\Sigma_j\big)\big]\\
=&\sum_{i=1}^m\sum_{y\in E}q(x_i,y)\left[\1_{\Sigma_i}\big(\xi(y)\big)-\1_{\Sigma_i}\big(\xi(x_i)\big)\right] \prod_{j:j\neq i} \big[\1_{\Sigma_j}\big(\xi(x_j)\big)-\overline{\mu}\big(\Sigma_j\big)\big]\\
=&\sum_{i=1}^m\sum_{y\in E}q(x_i,y)[H_{\bs \Sigma}(\xi;A^{x_i,y})-H_{\bs \Sigma}(\xi;A)]
=\mathsf L_{B,m}[H_{\bs \Sigma}(\xi;\,\cdot\,)](A),
\end{align*}
where the second equality follows from the assumption that the entries of $A$ are distinct.  
The mutation part of $\mathsf L^{0,\mu}$ is given by
\begin{align*}
&\sum_{x\in E}\int_S \big(H_{\bs \Sigma}(\xi^{x|\sigma};A)-H_{\bs \Sigma}(\xi;A)\big)d\mu(\sigma)\\
=&\sum_{i=1}^m\int_S
\big[\1_{\Sigma_i}\big(\xi^{x_i|\sigma}(x_i)\big)-\1_{\Sigma_i}\big(\xi(x_i)\big)\big] \prod_{j:j\neq i} \left[\1_{\Sigma_j}\big(\xi(x_j)\big)-\overline{\mu}\big(\Sigma_j\big)\right]d\mu(\sigma)\\
=&\sum_{i=1}^m\left[\mu(\Sigma_i) -\mu(\1)\1_{\Sigma_i}\big(\xi(x_i)\big)\right]\prod_{j:j\neq i}\big[\1_{\Sigma_j}\big(\xi(x_j)\big)-\overline{\mu}(\Sigma_j)\big]
=-m\mu(\1)H_{\bs \Sigma}(\xi;A).
\end{align*}
The above two displays give the required equation in  (\ref{gen1}). 
\end{proof}

Proposition~\ref{prop:dual0} is enough to solve for $\E_\xi[H_{\bs \Sigma}(\xi_t;A)]$ by coalescing Markov chains. For example, for $m=1$, Proposition~\ref{prop:dual0} shows that
\begin{align}\label{eq:gh1}
\mathsf L^{0,\mu} H_{\Sigma}(\,\cdot\,;x)(\xi)=&\big[\mathsf L_{B,1}-\mu(\1)\big]H_{ \Sigma}(\xi;\,\cdot\,)(x).
\end{align}
The foregoing equation can be used to find $\mathsf L^{0,\mu} H_{(\Sigma_1,\Sigma_2)}(\cdot;x,x)(\xi)$ as follows. We write (\ref{eq:gh1}) as
\begin{align}\label{eq:gh11}
\mathsf L^{0,\mu} J_{\Sigma}(\,\cdot\,;x)(\xi)=\mathsf L^{0,\mu} H_{\Sigma}(\,\cdot\,;x)(\xi)=&\mathsf L_{B,1}J_\Sigma(\xi;\,\cdot\,)(x)-\mu(\1)J_\Sigma(\xi;x)+\mu(\Sigma)
\end{align}
so that
\begin{align}
&\mathsf L^{0,\mu}H_{(\Sigma_1,\Sigma_2)}(\,\cdot\,;x,x)(\xi)
=\mathsf L^{0,\mu}\big[J_{\Sigma_1\cap \Sigma_2}(\,\cdot\,;x)-
J_{\Sigma_1}(\,\cdot\,;x)\overline{\mu}(\Sigma_2)-J_{\Sigma_2}(\,\cdot\,;x)\overline{\mu}(\Sigma_1)
\big](\xi)\notag\\
=&\mathsf L_{B,1}\big[ J_{\Sigma_1\cap \Sigma_2}(\xi;\,\cdot\,)-
\overline{\mu}(\Sigma_2)J_{\Sigma_1}(\xi;\,\cdot\,)-
\overline{\mu}(\Sigma_1)J_{\Sigma_2}(\xi;\,\cdot\,)+\overline{\mu}(\Sigma_1)\overline{\mu}(\Sigma_2)\big](x)\notag\\
&-\mu(\1)\big[ J_{\Sigma_1\cap \Sigma_2}(\xi;x)-
\overline{\mu}(\Sigma_2)J_{\Sigma_1}(\xi;x)-
\overline{\mu}(\Sigma_1)J_{\Sigma_2}(\xi;x)+\overline{\mu}(\Sigma_1)\overline{\mu}(\Sigma_2)\big]\notag\\
&+\mu(\1)\big[\overline{\mu}(\Sigma_1\cap \Sigma_2)-\overline{\mu}(\Sigma_1)\overline{\mu}(\Sigma_2)\big]\notag\\
\begin{split}\notag 
=&\mathsf L_{B,2}H_{(\Sigma_1,\Sigma_2)}(\xi;\,\cdot\,,\,\cdot\,)(x,x)
-\mu(\1)H_{(\Sigma_1,\Sigma_2)}(\xi;x,x)
+\mu(\1)\big[\overline{\mu}(\Sigma_1\cap \Sigma_2)-\overline{\mu}(\Sigma_1)\overline{\mu}(\Sigma_2)\big].
\end{split}
\end{align}
We can summarize the last equality and (\ref{gen1}) with $m=2$ as the following equation:
\begin{align}\label{eq:gh2}
\begin{split}
\forall\;x,y\in E,\quad \mathsf L^{0,\mu} H_{(\Sigma_1,\Sigma_2)}(\,\cdot\,;x,y)(\xi)=&\big[\mathsf L_{B,2}- \mu(\1) |\{x,y\}|\big]H_{(\Sigma_1,\Sigma_2)}(\xi;\,\cdot\,,\,\cdot\,)(x,y)\\
&+\mu(\1) \big[\overline{\mu}(\Sigma_1\cap \Sigma_2)-\overline{\mu}(\Sigma_1)\overline{\mu}(\Sigma_2)\big]\1_{\{x=y\}}.
\end{split}
\end{align}
This equation is enough for (\ref{MG}) in particular. 

\section*{List of frequent notations}
\mbox{}\smallskip

 $S$: the set of types $\{1,0\}$. \\
\indent  $\Pi=\big(\Pi(\sigma,\tau)\big)$: $2\times 2$ payoff matrices with real entries.\\
\indent $(E,q)$: a kernel on $E$ assumed to have a zero trace and be irreducible and reversible.\\
\indent $R_\ell$: a limiting return probability of voting kernels defined in Assumption~\ref{ass:homo}.\\
\indent $\pi$: the stationary distribution of $q$. \\
\indent $\pi_{\min},\pi_{\max}$: $\pi_{\min}=\min_x\pi(x)$ and $\pi_{\max}=\max_x\pi(x)$. \\
\indent $\nu(x,y)$: the measure on $E\times E$ defined by $\nu(x,y)=\pi(x)^2q(x,y)$ in (\ref{def:nu}).\\
\indent $\mu$: a mutation measure defined on $S$ (Section~\ref{sec:poisson}).\\
\indent $\overline{\mu}(\sigma)$: the ratio $\mu(\sigma)/\mu(\1)$ with the convention that $0/0=0$ defined in (\ref{def:mubar}).\\
\indent   $w$: selection strength (Section~\ref{sec:poisson}).\\
\indent $\overline{w}$: a maximal selection strength defined by $  \big(2+2\max_{\sigma,\tau\in S}|\Pi(\sigma,\tau)|\big)^{-1}$ in (\ref{def:w0}).\\
\indent $\gamma_n$: a constant time change applied to evolutionary games.\\
\indent $\P^w,\P^{(n),w}$: laws of evolutionary games subject to selection strength $w$ (Section~\ref{sec:poisson}).\\
\indent $\ms P(U)$: the set of probability measures defined on a Polish space $U$.\\

\indent \emph{Functions of configurations}
\smallskip

\indent $\xi,\eta$: $\{1,0\}$-valued population configurations .\\
\indent $\dxi(x)$: $1-\xi(x)$ defined in (\ref{def:hat}).\\ 
\indent $p_\sigma(\xi)$: the density of type $\sigma$ in a population configuration $\xi$ defined in (\ref{def:p1}). \\
\indent $W_\ell(\xi)$: the density function defined by $W_\ell(\xi) =\sum_{x,y\in E}\pi(x)q^{\ell}(x,y)\xi(x)\dxi(y)$ in (\ref{def:Pell}).\\
\indent $H(\xi;x,y)$: the dual function defined by  $H(\xi;x,y)=\big[\xi(x)-\overline{\mu}(1)\big]\big[\dxi(y)-\overline{\mu}(0)\big]$ in (\ref{def:H}).\\

\indent \emph{Processes}
\smallskip

\indent $(\Lambda_t(x,y)),(\Lambda^\sigma_t(x))$: the Poisson processes defined in (\ref{rates}).\\
\indent $(Y_t)$ under $\P$ or $\P^{(n)}$: the density process of $1$'s defined in (\ref{eq:Y}).\\ 
\indent $(M_t)$ under $\P$ or $\P^{(n)}$: the martingale part of $(Y_t)$ according to the decomposition in (\ref{eq:M}).\\ 
\indent $(D_t^w)$ under $\P$ or $\P^{(n)}$: the Radon-Nikodym derivative process defined in (\ref{def:D}).\\ 
\indent $Z^{(n)}$ under $\P^{(n)}$: the process $(Y_{\gamma_nt},M_{\gamma_nt},D^{w_n}_{\gamma_nt})$ under $\P^{(n)}$ defined in (\ref{def:Zn}).\\ 
\indent $(Y_t,M_t,D_t)$ under $\P^{(\infty)}$: the limit of $Z^{(n)}$ under $\P^{(n)}$ (Theorem~\ref{thm:main2}).\\ 
\indent $\{(B^x_t);x\in E\}$: coalescing $q$-Markov chains (Section~\ref{sec:weak}).\\
\indent $M_{x,y}$: the first meeting time of $B^x$ and $B^y$ (Section~\ref{sec:weak}).\\
\indent $(X_\ell),(Y_\ell)$: auxiliary discrete-time $q$-Markov chains (Section~\ref{sec:tight}). \\

\end{document}